\documentclass[11pt]{article}
\usepackage[a4paper,margin=1in]{geometry}
\usepackage{amsmath,amssymb,amsthm,mathtools}
\usepackage{mathrsfs}
\usepackage{enumitem}
\usepackage{hyperref}
\usepackage[nameinlink]{cleveref}
\usepackage{booktabs}
\usepackage{array}
\usepackage{microtype}
\hypersetup{colorlinks=true,citecolor=blue,linkcolor=blue,urlcolor=blue,pdftitle={Variable-Length Markov Chains on Finite Quivers: Boundary-Window Identifiability, Exact Depth, and Local Rank Comparison},pdfauthor={Oleg Kiriukhin},pdfsubject={Variable-length Markov chains on finite quivers, boundary-window identifiability, exact depth, and local rank comparison},pdfkeywords={variable-length Markov chains, context-tree models, finite quivers, quiver-valued variable-length Markov chains, boundary-window identifiability, local memory depth, exact depth, local rank comparison, informative maps, minimal informative window, first-order identifiability}}
\newtheorem{theorem}{Theorem}[section]
\newtheorem{proposition}[theorem]{Proposition}
\newtheorem{lemma}[theorem]{Lemma}
\newtheorem{corollary}[theorem]{Corollary}
\theoremstyle{definition}
\newtheorem{definition}[theorem]{Definition}
\newtheorem{example}[theorem]{Example}
\newtheorem{remark}[theorem]{Remark}
\newtheorem{assumption}[theorem]{Assumption}
\newcommand{\cQ}{\mathcal{Q}}
\newcommand{\cV}{\mathcal{V}}
\newcommand{\cE}{\mathcal{E}}
\newcommand{\cS}{\mathcal{S}}

\newcommand{\RR}{\mathbb{R}}
\newcommand{\PP}{\mathbb{P}}
\newcommand{\rank}{\operatorname{rank}}
\newcommand{\Ker}{\operatorname{Ker}}
\newcommand{\im}{\operatorname{im}}

\title{Variable-Length Markov Chains on Finite Quivers: Boundary-Window Identifiability, Exact Depth, and Local Rank Comparison}
\author{Oleg Kiriukhin\\ City University of Hong Kong\\ \texttt{okiriukh@cityu.edu.hk}}
\date{April 2026}
\begin{document}
\maketitle

\begin{abstract}
Variable-length Markov chains on finite quivers provide a natural framework for context-dependent stochastic growth under incidence constraints.
I study quiver-valued variable-length Markov chains observed through finite boundary windows and develop a first-order theory of visible-depth identifiability in terms of stationary visible one-step transition laws and their restricted differentials on prescribed tangent blocks.

For visible depth $m$, the main object is the stationary one-step informative map $q_{\cQ}^{(m)}$.
In the edge-homogeneous regime, once the local visible support is fixed and the representation hypothesis is imposed, all admissible visible depths encode the same edge-level extension law and therefore have the same first-order rank.
In the exact-depth regime of context length $r$, the depth-$r$ boundary process is the canonical finite-state Markov chain, every smaller visible window is a deterministic truncation of that chain, and every coarser informative map factors $C^1$-smoothly through the depth-$r$ informative map on the relevant affine transition-array neighborhood.
In particular, the rank cannot increase beyond depth $r$.

After quotienting an arbitrary tangent block by the directions already invisible at depth $r$, I characterize strict coarse-depth loss exactly by coarse rank deficiency, equivalently by strict rank drop from depth $r$ to depth $m$ on the original tangent block.
I also give subspace-based and global selected-coordinate criteria, a global one-coordinate branching criterion, and an explicit depth-two example.
Under full fine-depth rank and strict coordinate-rank loss at every smaller depth, a global coordinate-rank theorem yields $m_*(T,\theta_0)=r$.
Reduced local coordinates remove stochastic redundancies, all first-order criteria are invariant under $C^1$ reparameterization, and the statistical and LAN consequences remain conditional on additional estimation and likelihood-level hypotheses.
\end{abstract}

\section{Introduction}
Variable-length Markov chains (VLMCs) and context-tree models describe processes whose one-step predictive law depends on a suffix of variable length rather than on a fixed memory depth; see, for example, \cite{buhlmann-wyner,rissanen,buhlmann-modelsel,machler-vlmc}.
I study a quiver-valued extension of this variable-length setting.
Replacing the alphabet by the edge set of a finite quiver imposes incidence constraints and turns the natural hidden state into an admissible path.
This yields quiver-valued variable-length Markov chains.

Often one observes only a finite boundary window.
The main deterministic problem is first-order boundary-window identifiability: which tangent directions in parameter space are detectable from a given visible depth, and which additional directions become detectable at larger visible depth?
Equivalently, I determine when a finite boundary window determines the relevant first-order information on local memory depth over a prescribed tangent block.
Thus the minimal informative window depends locally on $(\theta_0,T)$, where $\theta_0$ is the reference parameter and $T$ is the tangent block under consideration.

To my knowledge, this is the first systematic first-order theory of boundary-window identifiability and local rank comparison for quiver-valued VLMCs.

I prove four main deterministic theorems.
First, in the edge-homogeneous regime all admissible visible depths are locally equivalent once the local visible support is fixed and each relevant edge-extension pair is represented at the depths being compared.
Second, in the exact-depth regime every coarser visible law factors through the depth-$r$ law, so first-order rank cannot increase beyond depth $r$.
Third, after quotienting an arbitrary tangent block by the directions already invisible at depth $r$, failure of first-order local sufficiency is equivalent to strict coarse rank loss, or equivalently to strict rank drop from depth $r$ to depth $m$ on the original block.
Fourth, this intrinsic formulation yields both global selected-coordinate criteria and a global coordinate-rank theorem for recovery of the minimal informative window.

The statistical and local asymptotic normality (LAN) sections require additional estimation or likelihood hypotheses and record consequences of the deterministic theory only under those additional assumptions.
In particular, exact depth by itself yields rank monotonicity, but deterministic recovery of the true informative depth in the strengthened form proved later requires the additional coordinate-rank strict-loss input stated in \cref{thm:selected-criterion-global,thm:minimal-global}.
The Gaussian comparison statements require an additional likelihood-level factorization that bare LAN alone does not imply.

Parameter-geometric convention.
Throughout the paper the statistical model is parameterized by a $C^1$ chart on a finite-dimensional parameter manifold $\Theta$.
For local calculations I identify a neighborhood of the reference point $\theta_0\in\Theta$ with an open set in $\RR^d$, but every first-order statement is understood intrinsically on the tangent space $T_{\theta_0}\Theta$.
Thus a tangent block always denotes a linear subspace of $T_{\theta_0}\Theta$, and any Jacobian written in coordinates represents the differential of the corresponding map in the chosen chart.
Formal chart-invariance statements are recorded below as \cref{prop:chart-invariance}.

\paragraph{Relative-affine convention.}
When I say that a map is $C^1$ on a relative neighborhood inside an affine constraint set, I mean $C^1$ after choosing any affine chart on that constraint set.
Equivalently, after writing the affine set as $x_0+V$ with translation space $V$, the map becomes a $C^1$ map on an open neighborhood of $0$ in the Euclidean space $V$.
All such statements are independent of the chosen affine chart because affine changes of coordinates are smooth with smooth inverses.

\section{Model and informative maps}
\label{sec:model}

\subsection{Standing conventions}
Throughout the paper, all local statements are made after shrinking to neighborhoods on which the following objects are fixed: the relevant visible state spaces $\cS_m$, the admissible update maps $U_m$, the forced-zero pattern in each visible transition array, and the set of admissible edge-extension pairs that arise near $\theta_0$.
When I write that a visible word, update, or edge-extension pair \emph{arises near $\theta_0$}, I mean that it occurs with positive probability for every parameter in some sufficiently small neighborhood of $\theta_0$.
Stationary conditional probabilities are used only on neighborhoods where all conditioning events under discussion have stationary probabilities bounded away from $0$.
Whenever several depths are compared simultaneously, I tacitly work on sample paths whose current length is at least the largest depth under consideration, equivalently, one may fix any admissible initial path of that length.
When local coordinates are chosen, tangent spaces are identified with linear subspaces of $\RR^d$ through the selected chart.

\subsection{Finite quivers and right-growth models}
I write derivatives as restricted differentials $Dq(\theta_0)|_T:T\to E$, and a linear map is identified with a matrix only after bases are fixed explicitly.
The term \emph{full informative map} is reserved for the unreduced transition array, and \emph{reduced informative map} for its image under a reduced coordinate chart.
Let $\cQ=(\cV,\cE)$ be a finite quiver.
For each edge $e\in\cE$, write $s(e)$ and $t(e)$ for its source and target.
An admissible path is a finite word
\begin{equation*}
\omega=e_1\cdots e_k
\end{equation*}
of edges such that $t(e_i)=s(e_{i+1})$ for $1\le i<k$.
For $m\ge1$, the right boundary word of an admissible path of length at least $m$ is
\begin{equation*}
R_m(\omega)=e_{k-m+1}\cdots e_k.
\end{equation*}
Let $\Theta\subset\RR^d$ be open.
A one-sided right-growth model on $\cQ$ is a family $\{P_\theta:\theta\in\Theta\}$ under which the current admissible path grows by appending one admissible edge to the right at each discrete time step.
For an admissible visible context $\xi$ and an admissible appended edge $a$ with $s(a)=t(\xi)$, write
\begin{equation*}
\mu_\theta(a\mid\xi)
\end{equation*}
for the extension probability.
For each fixed $\xi$, these probabilities sum to $1$ over all admissible appended edges.

\begin{definition}[Edge-homogeneous regime]
The model is \emph{edge-homogeneous at $\theta_0$} if there exists a neighborhood $U\ni\theta_0$ such that for every $\theta\in U$, every admissible context $\xi$, and every admissible appended edge $a$,
\begin{equation}
\mu_\theta(a\mid\xi)=\mu_\theta(a\mid e),
\end{equation}
where $e$ is the last edge of $\xi$.

\end{definition}

\begin{definition}[Exact depth at $\theta_0$]
Fix $r\ge1$.
The model has \emph{exact depth $r$ at $\theta_0$} if there exists a neighborhood $U\ni\theta_0$ such that:
\begin{enumerate}[label=\textup{(\alph*)}]
\item for every $\theta\in U$, every admissible context $\xi$ of length at least $r$, and every admissible appended edge $a$, the quantity $\mu_\theta(a\mid\xi)$ depends only on the suffix $R_r(\xi)$,
\item for every integer $k$ with $1\le k<r$ there exist admissible contexts $\xi,\xi'$ of length at least $k$ and an appended edge $a$ which is admissible from both $\xi$ and $\xi'$ such that $R_k(\xi)=R_k(\xi')$ and the maps $\theta\mapsto\mu_\theta(a\mid\xi)$ and $\theta\mapsto\mu_\theta(a\mid\xi')$ are not identical on any neighborhood of $\theta_0$.
\end{enumerate}

\end{definition}

\begin{remark}
The exact-depth condition is local rather than merely pointwise.
Part \textup{(a)} imposes depth-$r$ dependence uniformly on a neighborhood of $\theta_0$, while part \textup{(b)} excludes any smaller visible depth from representing the same extension law on any neighborhood of $\theta_0$.
Thus I study local structural depth near $\theta_0$, not merely the value at a single parameter of a pointwise memory-depth functional.

\end{remark}

\begin{remark}
\label{rem:k0-excluded} The restriction to $1\le k<r$ in part \textup{(b)} is deliberate. The case $k=0$ would require a separate convention for the empty suffix and would not by itself ensure that a common appended edge is admissible from both contexts. Excluding $k=0$ keeps the comparison well posed and leaves unchanged the intended notion of positive structural memory depth.

\end{remark}

\subsection{Visible state spaces and informative maps}
Fix $\theta_0\in\Theta$.
For each depth $m\ge1$, let $\cS_m$ denote the set of admissible length-$m$ words that occur with positive probability for every parameter in some sufficiently small neighborhood of $\theta_0$.
Thus $\cS_m$ is a common local support, fixed after shrinking the neighborhood if necessary.
Write $Z_t^{(m)}\in\cS_m$ for the visible depth-$m$ boundary word.
Throughout, identities involving $Z_t^{(m)}$ or $Z_t^{(r)}$ are understood on the event that the current path length is at least the relevant depth.
Equivalently, one may fix any admissible initial path of length at least the largest depth under consideration, in which case all displayed formulas hold for every $t\ge0$.

\begin{definition}[Edge chain]
Assume the model is edge-homogeneous near $\theta_0$.
The associated \emph{edge chain} is the finite-state Markov chain on the locally constant set of admissible edges whose transition probability from $e$ to $a$ is $\mu_\theta(a\mid e)$ whenever $s(a)=t(e)$.

\end{definition}

\begin{assumption}[Local regularity for informative maps]
\label{ass:regularity} Fix a finite set of visible depths under consideration. There exists a neighborhood $U\ni\theta_0$ such that for every $\theta\in U$:
\begin{enumerate}[label=\textup{(\roman*)}]
\item the relevant hidden finite-state chain is well defined on a locally constant state space and with locally constant forced-zero pattern,
\item the nonzero extension probabilities are $C^1$ in $\theta$,
\item in the exact-depth regime the depth-$r$ chain, and in the edge-homogeneous regime the edge chain, is irreducible on that fixed support,
\item every visible state used in a conditional probability has strictly positive stationary probability, and these stationary masses are bounded away from $0$ on some smaller neighborhood of $\theta_0$,
\item every visible state space $\cS_m$ and every admissible update map $U_m$ used in the paper are locally constant on $U$.
\end{enumerate}

\end{assumption}

\begin{remark}
\Cref{ass:regularity} collects the local hypotheses required throughout the paper: support stability, local constancy of visible state spaces and update maps, $C^1$ dependence of the nonzero transition coordinates, irreducibility on the relevant finite support, and positivity of the visible stationary masses appearing in conditional laws.
The lower bound in \textup{(iv)} ensures that all stationary conditional probabilities entering $q_{\cQ}^{(m)}$ are defined on a common neighborhood and involve no vanishing denominators.
In later proofs I indicate explicitly which parts of \cref{ass:regularity} are used when needed.

\end{remark}

\begin{definition}[Visible informative maps]
For an admissible depth $m$ and visible states $y,y'\in\cS_m$, define the stationary one-step visible transition law by
\begin{equation}
q_{y,y'}^{(m)}(\theta):=P_{\theta,\mathrm{stat}}(Z_{t+1}^{(m)}=y'\mid Z_t^{(m)}=y),
\end{equation}
whenever the conditioning event has positive stationary probability.
Flattening all state-pair coordinates gives the full informative map
\begin{equation}
q_{\cQ}^{(m)}(\theta)\in\RR^{|\cS_m|^2}.
\end{equation}
Forced zeros and row-sum constraints are retained in this full map.

\end{definition}

\begin{proposition}[Regularity of informative maps in the two structural regimes]
\label{prop:q-regularity} Assume \cref{ass:regularity}. Fix a visible depth $m$.
\begin{enumerate}[label=\textup{(\roman*)}]
\item If the model has exact depth $r\ge m$ at $\theta_0$, then $q_{\cQ}^{(m)}$ is well defined and $C^1$ near $\theta_0$.
\item If the model is edge-homogeneous near $\theta_0$, then $q_{\cQ}^{(m)}$ is well defined and $C^1$ near $\theta_0$.
\end{enumerate}

\end{proposition}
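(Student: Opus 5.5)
The plan is to write each coordinate $q_{y,y'}^{(m)}(\theta)$ as a ratio of finite sums of products of stationary probabilities and transition probabilities of a finite irreducible hidden chain. The conclusion then follows from the $C^1$ dependence of the stationary distribution on the transition matrix, together with the positive lower bound on denominators in \cref{ass:regularity}(iv).

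\textbf{Step 1: the hidden chain.} In case (i) the hidden chain is the depth-$r$ boundary process $Z^{(r)}_t$ on $\cS_r$. By exact depth (a), its transition from $x$ to $x'$ is $\mu_\theta(a\mid x)$ when $x'$ is the length-$r$ suffix of $xa$, and $0$ otherwise. The forced-zero pattern is locally constant by \cref{ass:regularity}(i), and the nonzero entries are $C^1$ by (ii). In case (ii) the hidden chain is the edge chain of the definition above, with transition matrix $P_\theta(e,a)=\mu_\theta(a\mid e)$; its entries are $C^1$ on a fixed support.

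\textbf{Step 2: regularity of the stationary law.} By \cref{ass:regularity}(iii) the hidden chain is irreducible on its fixed support, so it has a unique stationary vector $\pi_\theta$. I will characterize $\pi_\theta$ as the unique solution of the linear system
\begin{equation*}
\pi(I-P_\theta)=0,\qquad \pi\mathbf 1=1.
\end{equation*}
Irreducibility makes $\ker(I-P_\theta)^{\top}$ one-dimensional. Hence replacing one column of $I-P_\theta$ by $\mathbf 1$ gives an invertible matrix $A_\theta$, and $\pi_\theta=e_j^{\top}A_\theta^{-1}$. Cramer's rule, or equivalently the Markov chain tree theorem, then shows that $\pi_\theta$ is a rational function of the $C^1$ entries of $P_\theta$ with nonvanishing denominator. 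So $\pi_\theta$ is $C^1$ near $\theta_0$.

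\textbf{Step 3: expressing the visible law.} I must show that $Z^{(m)}$ is a deterministic function of the hidden state, and that the joint stationary law of $(Z_t^{(m)},Z_{t+1}^{(m)})$ is a finite sum of $\pi_\theta(\cdot)P_\theta(\cdot,\cdot)$ terms.
\begin{itemize}
\item In case (i), $m\le r$ gives $Z_t^{(m)}=R_m(Z_t^{(r)})$. Then $P(Z^{(m)}_t=y,Z^{(m)}_{t+1}=y')=\sum\pi_\theta(x)P_\theta(x,x')$, summed over $x\in R_m^{-1}(y)$ and $x'\in R_m^{-1}(y')$.
\item In case (ii), the word $Z_t^{(m)}$ is not a function of the current edge alone. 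I therefore use the path law: the stationary probability of a word $e_1\cdots e_m$ is $\pi_\theta(e_1)\prod_{i<m}P_\theta(e_i,e_{i+1})$.
\begin{itemize}
\item The joint probability of $(y,y')$ with $y'=$ the last $m$ letters of $ya$ is this word probability times $P_\theta(e_m,a)$. Equivalently, I lift to the depth-$m$ word chain, whose stationary law is the displayed product.
\item This word probability is a polynomial in $C^1$ quantities, so it is $C^1$.
\end{itemize}
\end{itemize}

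\textbf{Step 4: conditioning.} I divide the joint probability by the marginal $\pi^{(m)}_\theta(y)$, which is again a $C^1$ finite sum. By \cref{ass:regularity}(iv) this marginal is bounded away from $0$ on a neighborhood, so each coordinate $q^{(m)}_{y,y'}$ is well defined and $C^1$ there. Coordinates forced to zero are identically $0$ on the neighborhood because the update maps $U_m$ are locally constant (\cref{ass:regularity}(v)), so they are trivially $C^1$. Flattening the coordinates gives the claim.

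\textbf{Main obstacle.} The step needing most care is Step 2. I must justify that the invertibility of the bordered matrix $A_\theta$ persists on a whole neighborhood, and not merely at $\theta_0$. I will get this from irreducibility holding for every $\theta\in U$ on the fixed support, which gives $\det A_\theta\neq 0$ for each such $\theta$. I will also check that, in case (i), the states of $\cS_r$ reachable with positive probability coincide with the support on which irreducibility is assumed, so that $\pi_\theta$ charges exactly the words used in Step 3.
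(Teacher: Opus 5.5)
Your proof is correct, but it is organized differently from the paper's in both cases. In case (i) the paper does not compose directly in $\theta$. It invokes \cref{cor:factorization}, which defines a $C^1$ map $G_{r,m}$ on a relative neighborhood $\mathcal U_r$ of $q_{\cQ}^{(r)}(\theta_0)$ in the affine array space $\mathcal A_r$, whose coordinates are exactly your fiber-sum ratios. It then writes $q_{\cQ}^{(m)}=G_{r,m}\circ q_{\cQ}^{(r)}$, with $q_{\cQ}^{(r)}$ the flattened depth-$r$ transition matrix.

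Since $G_{r,m}$ has to be defined on arrays that need not come from the model, the paper needs \cref{lem:local-stability} to extend irreducibility and positivity of fiber masses to an array neighborhood. Your route only needs irreducibility along the parametric family, which \cref{ass:regularity}\textup{(iii)} gives for every $\theta\in U$. For this proposition alone, your route is therefore more economical. Your column-replacement bordering in Step 2 is an inessential variant of the matrix $I-P_\theta+\mathbf 1\mathbf 1^\top$ used in \cref{lem:stationary}. What the paper's packaging buys is a factorization through the depth-$r$ law itself, which is reused for rank monotonicity and the strict-loss theory.

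In case (ii) the paper's argument is shorter than yours. By \cref{cor:edge-stationary}, every coordinate of $q_{\cQ}^{(m)}$ is either a forced zero or literally equal to $\mu_\theta(a\mid e)$. So $C^1$ regularity follows directly from \cref{ass:regularity}\textup{(ii)}, and \cref{ass:regularity}\textup{(iv)} is used only to make the conditioning well defined. Your ratio in Steps 3--4 reduces to the same thing: the factor $\pi_\theta(e_1)\prod_{i<m}P_\theta(e_i,e_{i+1})$ cancels and leaves $P_\theta(e_m,a)=\mu_\theta(a\mid e_m)$. Noticing this cancellation makes Step 2 unnecessary in case (ii). It also exposes the coordinate-copy structure on which \cref{thm:homogeneous} rests.
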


\begin{remark}
The proof of the exact-depth part uses results established later in \cref{sec:exact}.
The present regularity statement follows immediately from the factorization theorem proved there.

\end{remark}

\begin{proof}
In the exact-depth regime, \cref{prop:augmentation} identifies the depth-$r$ boundary process with a finite-state Markov chain on the common local support $\cS_r$, and \cref{cor:factorization} shows that for $\theta$ near $\theta_0$ one has
\begin{equation*}
q_{\cQ}^{(m)}(\theta)=G_{r,m}(q_{\cQ}^{(r)}(\theta))
\end{equation*}
for a $C^1$ map $G_{r,m}$ defined on a relative neighborhood of the affine transition family.
Since the depth-$r$ informative map is just the flattened depth-$r$ transition matrix, its coordinates are $C^1$ by assumption on the nonzero extension probabilities, and therefore so is $q_{\cQ}^{(m)}$.
In the edge-homogeneous regime, \cref{prop:edge-markov,cor:edge-stationary} show that every coordinate of $q_{\cQ}^{(m)}(\theta)$ is either a forced zero or a coordinate of the edge-extension law $\theta\mapsto\mu_\theta(a\mid e)$.
Those nonzero coordinates are $C^1$ by \cref{ass:regularity}, so the full informative map is $C^1$.
In both regimes the conditioning events defining the stationary visible transition laws are well defined because the corresponding visible stationary probabilities are positive by \cref{ass:regularity}.

\end{proof}

\begin{definition}[Minimal informative window]
Let $T\subset T_{\theta_0}\Theta$ be a linear subspace.
Define
\begin{equation}
m_*(T,\theta_0):=\inf\Bigl\{m\ge1:\rank\bigl(Dq_{\cQ}^{(m)}(\theta_0)|_T\bigr)=\dim
T\Bigr\}\in\mathbb N\cup\{\infty\},
\end{equation}
with the convention that $m_*(T,\theta_0)=\infty$ if no depth attains full column rank $\dim T$.

\end{definition}

\begin{proposition}[Chart invariance of first-order criteria]
\label{prop:chart-invariance} Let $\psi:(\widetilde\Theta,\widetilde\theta_0)\to(\Theta,\theta_0)$ be a $C^1$ local reparameterization with invertible derivative at $\widetilde\theta_0$, and let $\widetilde q_{\cQ}^{(m)}:=q_{\cQ}^{(m)}\circ\psi$. Identify a tangent block $\widetilde T\subset T_{\widetilde\theta_0}\widetilde\Theta$ with $T:=D\psi(\widetilde\theta_0)\widetilde T\subset T_{\theta_0}\Theta$. Then for every visible depth $m$,
\begin{equation}
D\widetilde q_{\cQ}^{(m)}(\widetilde\theta_0)|_{\widetilde T} = Dq_{\cQ}^{(m)}(\theta_0)|_T\circ
D\psi(\widetilde\theta_0)|_{\widetilde T}.
\end{equation}
Consequently, the rank of the restricted derivative, kernel inclusions between visible depths on corresponding tangent blocks, first-order local sufficiency, and the minimal informative window $m_*(T,\theta_0)$ are invariant under $C^1$ reparameterization.

\end{proposition}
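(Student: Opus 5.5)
The identity itself is the chain rule, and every invariance claim then follows from linear algebra for composition with an isomorphism.

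\textbf{Step 1 (chain rule).} By \cref{prop:q-regularity}, $q_{\cQ}^{(m)}$ is $C^1$ near $\theta_0$ in either structural regime. The map $\psi$ is $C^1$ with $\psi(\widetilde\theta_0)=\theta_0$. Hence $\widetilde q_{\cQ}^{(m)}=q_{\cQ}^{(m)}\circ\psi$ is $C^1$ near $\widetilde\theta_0$, and
\begin{equation*}
D\widetilde q_{\cQ}^{(m)}(\widetilde\theta_0)=Dq_{\cQ}^{(m)}(\theta_0)\circ D\psi(\widetilde\theta_0).
\end{equation*}
Restrict this to $\widetilde T$. Since $D\psi(\widetilde\theta_0)$ maps $\widetilde T$ into $T=D\psi(\widetilde\theta_0)\widetilde T$ by definition, the displayed identity of the proposition follows, with $D\psi(\widetilde\theta_0)|_{\widetilde T}$ regarded as a map $\widetilde T\to T$.

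\textbf{Step 2 (isomorphism of blocks).} Because $D\psi(\widetilde\theta_0)$ is invertible, its restriction $L:=D\psi(\widetilde\theta_0)|_{\widetilde T}:\widetilde T\to T$ is a linear isomorphism. In particular $\dim\widetilde T=\dim T$.

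\textbf{Step 3 (invariance of rank and kernels).} Write $A_m:=Dq_{\cQ}^{(m)}(\theta_0)|_T$. Then:
\begin{itemize}[label=--]
\item \emph{Rank.} $\rank(A_m\circ L)=\rank A_m$, since precomposition with an isomorphism preserves the image.
\item \emph{Kernels.} $\Ker(A_m\circ L)=L^{-1}\Ker A_m$. So for any two depths $m,m'$, the inclusion $\Ker A_m\subset\Ker A_{m'}$ holds if and only if $\Ker(A_m L)\subset\Ker(A_{m'}L)$; applying the bijection $L^{-1}$ transports the inclusion in both directions.
\item \emph{Local sufficiency.} First-order local sufficiency is formulated through such kernel inclusions or rank equalities between depths on the block; the precise definition appears later in the paper. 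It is therefore invariant.
\item \emph{Minimal informative window.} Full column rank $\rank(A_m)=\dim T$ is equivalent to $\rank(A_mL)=\dim\widetilde T$. The sets of depths in the infimum defining $m_*$ therefore coincide, giving $m_*(\widetilde T,\widetilde\theta_0)=m_*(T,\theta_0)$, including the value $\infty$.
\end{itemize}

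\textbf{Main obstacle.} The difficulty is not analytic. It is making sure every later "first-order criterion" really is expressed through ranks or kernels of restricted differentials, so that Step 3 covers it. I would phrase the conclusion as: any property of the family $(A_m)_m$ that is invariant under simultaneous precomposition by a common isomorphism is chart-invariant. Each listed notion is such a property.

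A minor point is that $\widetilde q_{\cQ}^{(m)}$ must be defined on a neighborhood of $\widetilde\theta_0$. This holds after shrinking, since $\psi$ is continuous and the neighborhood from \cref{ass:regularity} pulls back to an open set.
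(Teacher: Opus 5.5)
Your proposal is correct and matches the paper's proof: the identity is the chain rule, and because $D\psi(\widetilde\theta_0)|_{\widetilde T}:\widetilde T\to T$ is a linear isomorphism, precomposition with it preserves rank and transports kernels. Hence kernel inclusions, first-order local sufficiency, full column rank, and therefore $m_*$ are all unchanged. One small caveat: \cref{prop:q-regularity} only gives $C^1$ regularity for $m\le r$ in the exact-depth regime, but the chain rule needs only differentiability of $q_{\cQ}^{(m)}$ at $\theta_0$, which the statement already presupposes, so nothing is lost.
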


\begin{proof}
The displayed identity is just the chain rule.
Since $D\psi(\widetilde\theta_0)|_{\widetilde T}:\widetilde T\to T$ is a linear isomorphism, right-composition with it does not change rank and identifies kernels.
In particular, full-column-rank of the restricted derivative is preserved under the change of coordinates, so the defining property of $m_*(T,\theta_0)$ is unchanged.
The stated invariance properties follow immediately.

\end{proof}

\begin{definition}[First-order local sufficiency]
For $m<r$ and a tangent block $T\subset T_{\theta_0}\Theta$, the depth-$m$ window is \emph{first-order locally sufficient relative to depth $r$ on $T$} if
\begin{equation}
\Ker\bigl(Dq_{\cQ}^{(m)}(\theta_0)|_T\bigr) \subset \Ker\bigl(Dq_{\cQ}^{(r)}(\theta_0)|_T\bigr).
\end{equation}

\end{definition}

\begin{lemma}[Equivalent linear factorization]
\label{lem:suff-equivalent} All kernels and images below are taken for the restricted differentials on the tangent block $T$. Fix $m<r$ and a tangent block $T\subset T_{\theta_0}\Theta$ identified in local coordinates with a subspace of $\RR^d$. The following are equivalent:
\begin{enumerate}[label=\textup{(\roman*)}]
\item the depth-$m$ window is first-order locally sufficient relative to depth $r$ on $T$,
\item there exists a linear map
\begin{equation*}
A:\im\bigl(Dq_{\cQ}^{(m)}(\theta_0)|_T\bigr)\to \im\bigl(Dq_{\cQ}^{(r)}(\theta_0)|_T\bigr)
\end{equation*}
with
\begin{equation}
Dq_{\cQ}^{(r)}(\theta_0)|_T=A\circ Dq_{\cQ}^{(m)}(\theta_0)|_T.
\end{equation}
\end{enumerate}

\end{lemma}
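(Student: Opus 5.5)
This is pure linear algebra. Write $L_m := Dq_{\cQ}^{(m)}(\theta_0)|_T : T\to\RR^{|\cS_m|^2}$ and $L_r := Dq_{\cQ}^{(r)}(\theta_0)|_T : T\to\RR^{|\cS_r|^2}$. Both are linear maps on the finite-dimensional space $T$. By \cref{prop:q-regularity} they are well defined, and by \cref{prop:chart-invariance} nothing depends on the chart. So it suffices to prove the elementary fact that $\Ker L_m\subset\Ker L_r$ if and only if $L_r=A\circ L_m$ for some linear $A:\im L_m\to\im L_r$.

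For (ii)$\Rightarrow$(i), I will argue directly. If $L_r=A\circ L_m$ and $v\in\Ker L_m$, then $L_r v=A(L_m v)=A(0)=0$, so $v\in\Ker L_r$.

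For (i)$\Rightarrow$(ii), I will define $A$ on $\im L_m$ by $A(L_m v):=L_r v$. The only real point is well-definedness. Suppose $L_m v=L_m v'$. Then $v-v'\in\Ker L_m\subset\Ker L_r$, so $L_r v=L_r v'$. Linearity of $A$ follows from linearity of $L_m$ and $L_r$: given $w=L_m v$ and $w'=L_m v'$, the vector $L_m(\alpha v+\beta v')=\alpha w+\beta w'$ is sent to $\alpha L_r v+\beta L_r v'$. The image of $A$ lies in $\im L_r$ by construction, and $A\circ L_m=L_r$ holds by definition.

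Alternatively, the construction can be phrased without the well-definedness check. Choose a complement $T=\Ker L_m\oplus W$. Then $L_m|_W:W\to\im L_m$ is an isomorphism, and one sets $A:=L_r\circ(L_m|_W)^{-1}$. To verify the identity on all of $T$, decompose $v=k+w$ with $k\in\Ker L_m$ and $w\in W$; then $L_r v=L_r w$ because $k\in\Ker L_r$ by hypothesis (i).

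I expect no genuine obstacle. The only step needing care is the well-definedness of $A$ in (i)$\Rightarrow$(ii), which is exactly where the kernel inclusion is used. I would also note that the codomain restriction to $\im L_r$ is automatic, and that $A$ is in general unique only on $\im L_m$, which is why it is stated with that domain.
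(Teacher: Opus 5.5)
Your proof is correct and is essentially the paper's argument: (ii)$\Rightarrow$(i) is immediate, and for (i)$\Rightarrow$(ii) you define $A(L_m h):=L_r h$ on $\im L_m$, using the kernel inclusion exactly to show $A$ is well defined. The complement-based construction is an equivalent variant, and the appeal to \cref{prop:q-regularity} is not needed, since the lemma is pure linear algebra about two given linear maps on $T$.
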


\begin{proof}
Condition \textup{(ii)} implies \textup{(i)} immediately.
Conversely, assume \textup{(i)}.
If
\begin{equation*}
\nu=Dq_{\cQ}^{(m)}(\theta_0)h\in\im\bigl(Dq_{\cQ}^{(m)}(\theta_0)|_T\bigr)
\end{equation*}
for some $h\in T$, define
\begin{equation*}
A(\nu):=Dq_{\cQ}^{(r)}(\theta_0)h.
\end{equation*}
This is well defined because two representatives differ by an element of $\Ker(Dq_{\cQ}^{(m)}(\theta_0)|_T)$, which lies in $\Ker(Dq_{\cQ}^{(r)}(\theta_0)|_T)$ by assumption.
Linearity is immediate.

\end{proof}

\section{Exact depth and deterministic truncation}
\label{sec:exact}

\begin{definition}[Deterministic boundary update]
Fix a depth $\ell\ge1$.
For $y\in\cS_\ell$ and an admissible appended edge $a$ from the terminal vertex of $y$, let $U_\ell(y,a)$ be the visible depth-$\ell$ state obtained by appending $a$ to $y$ and truncating back to length $\ell$.

\end{definition}

\begin{proposition}[Pathwise truncation identity]
\label{prop:projection-general} Let $m<r$. Whenever both boundary words are defined on a sample path,
\begin{equation}
Z_t^{(m)}=\Pi_{r,m}(Z_t^{(r)}) \qquad (t\ge0),
\end{equation}
where $\Pi_{r,m}$ sends a length-$r$ word to its suffix of length $m$.

\end{proposition}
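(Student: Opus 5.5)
This is a purely combinatorial identity about boundary words, and I would prove it directly from the definition of $R_m$. No probabilistic input and none of \cref{ass:regularity} is needed. Fix a sample path and a time $t$ at which the current admissible path $\omega_t=e_1\cdots e_k$ has length $k\ge r$, so that both $Z_t^{(r)}=R_r(\omega_t)$ and $Z_t^{(m)}=R_m(\omega_t)$ are defined.

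The key step is the suffix composition rule $R_m\circ R_r=R_m$ on words of length at least $r$, for $m<r$. By definition,
\[
R_r(\omega_t)=e_{k-r+1}\cdots e_k .
\]
Re-index this word as $f_1\cdots f_r$ with $f_j=e_{k-r+j}$. Its length-$m$ suffix is
\[
f_{r-m+1}\cdots f_r=e_{k-m+1}\cdots e_k=R_m(\omega_t).
\]
Since $\Pi_{r,m}$ is by definition the length-$m$ suffix map on length-$r$ words, this gives $\Pi_{r,m}(Z_t^{(r)})=Z_t^{(m)}$.

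It remains to check that this is consistent with the visible state spaces, so that $\Pi_{r,m}$ can be viewed as a map $\cS_r\to\cS_m$. A suffix of an admissible word is admissible, because the incidence condition $t(e_i)=s(e_{i+1})$ restricts to consecutive pairs within the suffix. Also, if a length-$r$ word $y$ occurs with positive probability near $\theta_0$, then so does its suffix, since the event $\{Z_t^{(r)}=y\}$ is contained in $\{Z_t^{(m)}=\Pi_{r,m}(y)\}$. Hence $\Pi_{r,m}(\cS_r)\subset\cS_m$.

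The identity holds for every $t\ge0$ once the initial path is fixed to have length at least $r$, following the standing convention. There is no genuine obstacle. The only point requiring care is the index bookkeeping in the re-indexing step, and the requirement that both words be defined, which is exactly the hypothesis of the statement.
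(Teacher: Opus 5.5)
Your proof is correct and is essentially the paper's argument: both boundary words are suffixes of the same current path, and the length-$m$ suffix of the length-$r$ suffix is the length-$m$ suffix, which you verify with explicit index bookkeeping. Your additional check that $\Pi_{r,m}$ maps $\cS_r$ into $\cS_m$ is correct; it is a supplement that the paper's proof does not include.
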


\begin{proof}
Both variables are suffixes of the same current path.
Taking the suffix of length $m$ of the suffix of length $r$ yields the suffix of length $m$ of the original path.

\end{proof}

\begin{proposition}[Depth-$r$ Markov representation]
\label{prop:augmentation} Assume exact depth $r$ at $\theta_0$. Then, for every $\theta$ sufficiently near $\theta_0$, the process $Z^{(r)}=(Z_t^{(r)})_{t\ge0}$ is a time-homogeneous first-order Markov chain on the common local support $\cS_r$.

\end{proposition}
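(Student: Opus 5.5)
The plan is to verify the Markov property directly from the right-growth mechanism together with part (a) of the exact-depth definition. Fix $\theta$ in the neighborhood $U$ on which (a) holds and on which \cref{ass:regularity} fixes the support $\cS_r$ and the update map $U_r$. Work on sample paths whose current length is at least $r$; equivalently, start from an admissible initial path of length at least $r$. The current full path $\omega_t$ then determines $Z_t^{(r)}=R_r(\omega_t)$. The dynamics append one edge $A_{t+1}$ with conditional law $\mu_\theta(\cdot\mid\omega_t)$ given the whole past $\mathcal F_t=\sigma(\omega_0,\dots,\omega_t)$.

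The key steps, in order, are as follows.

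First, I establish the pathwise update identity
\begin{equation*}
Z_{t+1}^{(r)}=U_r\bigl(Z_t^{(r)},A_{t+1}\bigr).
\end{equation*}
This holds because appending $A_{t+1}$ to $\omega_t$ and then taking the length-$r$ suffix is the same as appending it to $R_r(\omega_t)$ and truncating back to length $r$, which is the argument of \cref{prop:projection-general} applied to words.

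Second, I use (a): since $\omega_t$ has length at least $r$,
\begin{equation*}
\mu_\theta(a\mid\omega_t)=\mu_\theta\bigl(a\mid R_r(\omega_t)\bigr)=:\bar\mu_\theta\bigl(a\mid Z_t^{(r)}\bigr).
\end{equation*}
Admissibility of $a$ depends only on $s(a)=t(\omega_t)$, which is the terminal vertex of $Z_t^{(r)}$. Hence the conditional law of $A_{t+1}$ given $\mathcal F_t$ is a measurable function of $Z_t^{(r)}$ alone.

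Third, combining the two steps gives, for $y,y'\in\cS_r$,
\begin{equation*}
P_\theta\bigl(Z_{t+1}^{(r)}=y'\mid\mathcal F_t\bigr)=\sum_{a:\,U_r(Z_t^{(r)},a)=y'}\bar\mu_\theta\bigl(a\mid Z_t^{(r)}\bigr)=:K_\theta\bigl(Z_t^{(r)},y'\bigr).
\end{equation*}
Since $\sigma(Z_0^{(r)},\dots,Z_t^{(r)})\subset\mathcal F_t$, the tower property yields the Markov property with respect to the natural filtration of $Z^{(r)}$. Time-homogeneity follows because $K_\theta$ does not depend on $t$. In fact $U_r(y,\cdot)$ is injective, because $y'$ records the appended edge as its last letter, so each sum has at most one term.

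Finally, I check that the chain lives on $\cS_r$. By definition $\cS_r$ consists of the words with positive probability throughout the neighborhood, and by \cref{ass:regularity}(i),(v) the forced-zero pattern and $U_r$ are locally constant. So after shrinking, every state reachable with positive probability lies in $\cS_r$, and $K_\theta$ restricts to a stochastic matrix on $\cS_r$.

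The main obstacle is bookkeeping rather than mathematics. I need the conditional law of the appended edge given the full history to be $\mu_\theta(\cdot\mid\omega_t)$, and this has to be read off the definition of a right-growth model. I also need to handle the early times when the path is shorter than $r$, which I deal with through the standing convention of a long enough initial path. Part (b) of exact depth is not needed here; it matters only for minimality, not for the Markov representation.
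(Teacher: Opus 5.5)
Your proposal is correct and follows essentially the same route as the paper's proof: exact-depth condition (a) makes the conditional law of the appended edge given $\mathcal F_t$ a function of $Z_t^{(r)}$ alone, and the fixed update map $U_r$ from \cref{ass:regularity}\textup{(v)} turns this into the Markov property with a time-independent kernel. The paper leaves implicit the steps you make explicit, namely the kernel $K_\theta$, the tower-property passage to the natural filtration of $Z^{(r)}$, and the check that the chain lives on $\cS_r$.
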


\begin{proof}
Fix $\theta$ near $\theta_0$.
Let $\mathcal F_t$ be the sigma-field generated by the path history up to time $t$.
By exact depth, the conditional law of the next appended edge given $\mathcal F_t$ depends only on the current suffix $R_r$ of the path, hence only on $Z_t^{(r)}$.
By \cref{ass:regularity}\textup{(v)}, the update rule $U_r$ is fixed on the neighborhood under consideration.
If the appended edge is $a$, then the next boundary state is $U_r(Z_t^{(r)},a)$.
This is the Markov property, and time-homogeneity follows because the extension rule does not depend on $t$.

\end{proof}

\begin{lemma}[Smooth stationary law]
\label{lem:stationary} Let $P_\theta$ be a $C^1$ family of irreducible stochastic matrices on a fixed finite state space. Then the stationary distribution $\pi_\theta$ depends $C^1$-smoothly on $\theta$.

\end{lemma}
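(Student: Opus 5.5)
The plan is to characterize $\pi_\theta$ as the unique solution of a nonsingular linear system whose coefficients are $C^1$ in $\theta$, and then read off smoothness from Cramer's rule.

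Let $n$ be the size of the fixed state space and write $\pi_\theta$ as a row vector. The stationarity conditions are $\pi_\theta(I-P_\theta)=0$ together with $\pi_\theta\mathbf 1=1$. Since $P_\theta$ is irreducible, the Perron--Frobenius theorem gives that the eigenvalue $1$ is simple, so $\ker\bigl((I-P_\theta)^\top\bigr)$ is one-dimensional. It is spanned by a strictly positive vector, and the normalization therefore picks out a unique $\pi_\theta$.

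To turn this into a square system, I form the matrix $M_\theta$ by replacing one column of $I-P_\theta$ (say the last) with the all-ones vector $\mathbf 1$. Then $\pi_\theta$ is the unique solution of $\pi M_\theta=e_n^\top$.

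The key step is to show that $M_\theta$ is invertible. Suppose $xM_\theta=0$. Because the columns of $I-P_\theta$ sum to zero (row sums of $P_\theta$ equal one), the dropped last column equals minus the sum of the others. Hence the vanishing of $x(I-P_\theta)$ on the first $n-1$ columns forces $x(I-P_\theta)=0$. So $x=c\,\pi_\theta$ by simplicity of the eigenvalue. The last column then gives $x\mathbf 1=c=0$, so $x=0$. I expect this to be the only genuine step; it is exactly where irreducibility enters, through the simplicity of the Perron root.

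With invertibility in hand, the entries of $M_\theta$ are $C^1$ because those of $P_\theta$ are, so $\det M_\theta$ is $C^1$ and nonzero. By Cramer's rule, $\pi_\theta=e_n^\top M_\theta^{-1}$ has entries equal to ratios of polynomials in the entries of $P_\theta$ with nonvanishing denominator, hence $C^1$. The same conclusion follows from smoothness of matrix inversion on the open set of invertible matrices.

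If $P_\theta$ is only defined on a relative affine set, as in the relative-affine convention, the same argument applies after choosing an affine chart, since inversion is smooth in the ambient entries.
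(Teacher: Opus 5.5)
Your proof is correct and follows essentially the same route as the paper: write $\pi_\theta$ as a fixed row vector times $A_\theta^{-1}$, where $A_\theta$ is a $C^1$ invertible matrix built from $I-P_\theta$ and the normalization; invertibility comes from the one-dimensionality of the eigenvalue-$1$ eigenspace under irreducibility, and smoothness of matrix inversion finishes. The difference is cosmetic: the paper uses the rank-one augmentation $A_\theta=I-P_\theta+\mathbf 1\mathbf 1^\top$ with $\pi_\theta=\mathbf 1^\top A_\theta^{-1}$ and checks invertibility via the right eigenvector $\mathbf 1$, whereas you replace a column of $I-P_\theta$ by $\mathbf 1$ and use the left null vector.
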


\begin{proof}
Let $n$ be the number of states, let $\mathbf 1\in\RR^n$ be the all-ones column vector, and define
\begin{equation*}
A_\theta:=I-P_\theta+\mathbf 1\mathbf 1^\top.
\end{equation*}
I show that $A_\theta$ is invertible.
Let $x\in\RR^n$ satisfy $A_\theta x=0$.
Left-multiplying by any stationary row vector $\pi_\theta$ of $P_\theta$ gives
\begin{equation*}
0=\pi_\theta A_\theta x =\pi_\theta(I-P_\theta)x+\pi_\theta\mathbf 1\mathbf 1^\top x
=(\pi_\theta\mathbf 1)(\mathbf 1^\top x) =\mathbf 1^\top x,
\end{equation*}
because $\pi_\theta P_\theta=\pi_\theta$ and $\pi_\theta\mathbf 1=1$.
Hence $\mathbf 1^\top x=0$, and the equation $A_\theta x=0$ reduces to
\begin{equation*}
(I-P_\theta)x=0.
\end{equation*}
Thus $P_\theta x=x$.
For an irreducible finite-state stochastic matrix, the eigenspace for eigenvalue $1$ is one-dimensional and is spanned by $\mathbf 1$.
Therefore $x=c\mathbf 1$ for some scalar $c$.
Since $\mathbf 1^\top x=0$, necessarily $c=0$, so $x=0$.
Hence $A_\theta$ is invertible.
Now let $\pi_\theta$ denote the stationary row vector.
Since $\pi_\theta(I-P_\theta)=0$ and $\pi_\theta\mathbf 1=1$, one has
\begin{equation*}
\pi_\theta A_\theta = \pi_\theta(I-P_\theta)+\pi_\theta\mathbf 1\mathbf 1^\top = \mathbf 1^\top.
\end{equation*}
Therefore
\begin{equation*}
\pi_\theta=\mathbf 1^\top A_\theta^{-1}.
\end{equation*}
Because $\theta\mapsto A_\theta$ is $C^1$ and matrix inversion is $C^1$ on the open set of invertible matrices, the map $\theta\mapsto \pi_\theta$ is $C^1$ as claimed.

\end{proof}

\begin{lemma}[Local stability of irreducibility and visible positivity]
\label{lem:local-stability} Assume exact depth $r$ at $\theta_0$ together with \cref{ass:regularity}. Let $\mathcal A_r$ be the affine subspace of depth-$r$ transition arrays satisfying the forced-zero and row-sum constraints, and let $p_0=q_{\cQ}^{(r)}(\theta_0)\in\mathcal A_r$. Then there exists a relative neighborhood $\mathcal U_r\subset\mathcal A_r$ of $p_0$ such that every stochastic matrix represented by $p\in\mathcal U_r$ is irreducible on the fixed support, and for every visible state $y\in\cS_m$ used in the depth-$m$ informative map one has
\begin{equation}
\sum_{z\in\Pi_{r,m}^{-1}(y)}\pi(p)(z)>0,
\end{equation}
where $\pi(p)$ denotes the stationary law of the chain with transition matrix $p$.

\end{lemma}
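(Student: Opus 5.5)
The plan is to combine three ingredients: irreducibility on a fixed support is an open condition inside $\mathcal A_r$, the stationary law depends continuously on the transition array (\cref{lem:stationary}), and positivity at $p_0$ is supplied by \cref{ass:regularity}.

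\textbf{Step 1: irreducibility near $p_0$.} Work inside the affine set $\mathcal A_r$, where every array has the same forced-zero pattern, namely that of $p_0$.

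Irreducibility depends only on the directed graph of positive entries, so I will show that this graph is locally constant. By \cref{ass:regularity}(iii), the matrix at $p_0$ is irreducible on the fixed support. Its entries that are not forced zeros form a finite set of coordinates, each strictly positive at $p_0$. This is the support-stability convention: the forced-zero pattern is locally constant and the nonzero extension probabilities are positive.

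Choose a relative neighborhood on which each such coordinate stays strictly positive. Then every $p$ in it is entrywise nonnegative, hence a genuine stochastic matrix, since the row sums are built into $\mathcal A_r$. Moreover $p$ has exactly the same positivity graph as $p_0$. Irreducibility is a property of that graph, so it transfers to every such $p$.

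\textbf{Step 2: continuity of the stationary law.} On this neighborhood, apply the argument of \cref{lem:stationary} with the parameter taken to be $p$ itself, in an affine chart of $\mathcal A_r$. The matrix $A_p=I-p+\mathbf 1\mathbf 1^\top$ is invertible for every irreducible $p$. The map $p\mapsto A_p$ is affine, and inversion is smooth on invertible matrices. Hence
\begin{equation*}
\pi(p)=\mathbf 1^\top A_p^{-1}
\end{equation*}
is $C^1$, and in particular continuous, on the neighborhood.

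\textbf{Step 3: positivity of the visible masses.} For each $y\in\cS_m$ consider
\begin{equation*}
F_y(p):=\sum_{z\in\Pi_{r,m}^{-1}(y)}\pi(p)(z),
\end{equation*}
which is continuous by Step 2.

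At $p_0$, the Markov representation (\cref{prop:augmentation}) and the truncation identity (\cref{prop:projection-general}) show that $F_y(p_0)$ is the stationary probability that $Z^{(m)}$ equals $y$ at $\theta_0$. This is strictly positive by \cref{ass:regularity}(iv).

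Alternatively, irreducibility gives $\pi(p_0)(z)>0$ for every $z\in\cS_r$. The preimage $\Pi_{r,m}^{-1}(y)\cap\cS_r$ is nonempty, because $y$ occurs with positive probability and the path length is at least $r$, so the sum is positive.

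Since $\cS_m$ is finite, continuity yields a smaller relative neighborhood $\mathcal U_r$ on which every $F_y>0$. This neighborhood also inherits irreducibility from Step 1.

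\textbf{Main obstacle.} The only delicate point is Step 1. It requires that all non-forced-zero coordinates of $p_0$ are strictly positive, so that nearby points of $\mathcal A_r$ remain stochastic with an unchanged support graph. I will make explicit that this follows from the locally constant forced-zero pattern in \cref{ass:regularity}(i). Under that pattern, every non-forced coordinate is positive throughout a neighborhood of $\theta_0$, in particular at $\theta_0$ itself.
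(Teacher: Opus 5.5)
Your proposal is correct and follows essentially the same route as the paper: strict positivity of finitely many entries persists on a relative neighborhood in $\mathcal A_r$, which preserves irreducibility, and the visible stationary masses are continuous (via the $\mathbf 1^\top A_p^{-1}$ formula of \cref{lem:stationary}) and positive at $p_0$ by \cref{ass:regularity}\textup{(iv)}, so they stay positive nearby. The only minor difference is that the paper tracks only the entries on chosen positive paths between each ordered pair of states, whereas you keep every non-forced-zero entry positive; this uses the locally constant forced-zero pattern, but it also shows explicitly that nearby arrays in $\mathcal A_r$ are genuine nonnegative stochastic matrices, which is what \cref{cor:factorization} later relies on.
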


\begin{proof}
At $p_0$ the finite-state chain is irreducible on a fixed support.
Choose, for each ordered pair of states, a directed path with strictly positive transition probabilities.
These paths use only coordinates allowed by the fixed forced-zero pattern from \cref{ass:regularity}, so the same coordinates remain available throughout the affine family $\mathcal A_r$.
Positivity of the finitely many entries occurring on these paths persists on a sufficiently small relative neighborhood inside $\mathcal A_r$, so irreducibility persists on that fixed support.
The visible stationary masses are continuous functions of $p$ and are positive at $p_0$ by \cref{ass:regularity}, positivity therefore persists after shrinking the same neighborhood.

\end{proof}

\begin{corollary}[Factorization through the depth-$r$ chain]
\label{cor:factorization} Assume exact depth $r$ at $\theta_0$ and \cref{ass:regularity}. Fix $m<r$, let $\mathcal A_r$ be the affine subspace of arrays in $\RR^{|\cS_r|^2}$ satisfying the forced-zero and row-sum constraints for depth $r$, and let $\mathcal A_m$ be the corresponding affine subspace at depth $m$. Then there exists a relative neighborhood $\mathcal U_r\subset\mathcal A_r$ of $q_{\cQ}^{(r)}(\theta_0)$ and a $C^1$ map
\begin{equation*}
G_{r,m}:\mathcal U_r\to \mathcal A_m
\end{equation*}
such that
\begin{equation}
q_{\cQ}^{(m)}(\theta)=G_{r,m}(q_{\cQ}^{(r)}(\theta))
\end{equation}
for all $\theta$ near $\theta_0$.
Consequently,
\begin{equation*}
\rank\bigl(Dq_{\cQ}^{(m)}(\theta_0)|_T\bigr) \le \rank\bigl(Dq_{\cQ}^{(r)}(\theta_0)|_T\bigr)
\end{equation*}
for every tangent block $T\subset T_{\theta_0}\Theta$.

\end{corollary}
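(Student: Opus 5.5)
The plan is to write $G_{r,m}$ down explicitly from stationary formulas of the depth-$r$ chain. By \cref{prop:augmentation}, $Z^{(r)}$ is a time-homogeneous Markov chain on $\cS_r$ with transition matrix $p(\theta)=q_{\cQ}^{(r)}(\theta)$. By \cref{prop:projection-general}, $Z_t^{(m)}=\Pi_{r,m}(Z_t^{(r)})$. Under the stationary law $\pi_\theta$ of $Z^{(r)}$, for $y,y'\in\cS_m$ we therefore have
\begin{equation*}
q^{(m)}_{y,y'}(\theta)=\frac{\sum_{z\in\Pi_{r,m}^{-1}(y)}\ \sum_{z'\in\Pi_{r,m}^{-1}(y')}\pi_\theta(z)\,p_{z,z'}(\theta)}{\sum_{z\in\Pi_{r,m}^{-1}(y)}\pi_\theta(z)}.
\end{equation*}
Motivated by this, for $p$ in the relative neighborhood $\mathcal U_r\subset\mathcal A_r$ supplied by \cref{lem:local-stability}, I define
\begin{equation*}
G_{r,m}(p)_{y,y'}:=\frac{\sum_{z\in\Pi_{r,m}^{-1}(y)}\sum_{z'\in\Pi_{r,m}^{-1}(y')}\pi(p)(z)\,p_{z,z'}}{\sum_{z\in\Pi_{r,m}^{-1}(y)}\pi(p)(z)}.
\end{equation*}
The identity $q_{\cQ}^{(m)}=G_{r,m}\circ q_{\cQ}^{(r)}$ then holds by construction for all $\theta$ near $\theta_0$. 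This uses continuity of $q_{\cQ}^{(r)}$ (its nonzero entries are $C^1$ by \cref{ass:regularity}(ii)), so that $q_{\cQ}^{(r)}(\theta)\in\mathcal U_r$ after shrinking.

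Smoothness comes next. On $\mathcal U_r$ every $p$ is irreducible on the fixed support. Applying \cref{lem:stationary} to the affine family, viewed as a $C^1$ (indeed affine) family via an affine chart, gives that $p\mapsto\pi(p)=\mathbf 1^\top(I-p+\mathbf 1\mathbf 1^\top)^{-1}$ is $C^1$, in fact real-analytic. The numerator is then polynomial in $(\pi(p),p)$, and the denominator is positive on $\mathcal U_r$ by \cref{lem:local-stability}. So $G_{r,m}$ is $C^1$ in the relative-affine sense.

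I also need to check that $G_{r,m}$ lands in $\mathcal A_m$. For the row sums: summing over $y'$ uses that the fibres $\Pi_{r,m}^{-1}(y')$ partition $\cS_r$ and that $\sum_{z'}p_{z,z'}=1$ for $p\in\mathcal A_r$, so each row sums to $1$. For forced zeros: if the pair $(y,y')$ is forced zero at depth $m$, then no one-step update $U_r(z,a)$ with $z\in\Pi_{r,m}^{-1}(y)$ can have suffix $y'$. The reason is that the suffix of $U_r(z,a)$ is $U_m(\Pi_{r,m}z,a)$, and this compatibility between truncation and update is immediate from the definitions. Hence the corresponding $p_{z,z'}$ are forced zeros in $\mathcal A_r$, and the entry vanishes identically. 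The main subtlety I expect is exactly this: making sure the forced-zero patterns at depths $r$ and $m$ are compatible on the common local supports, and that the fibres $\Pi_{r,m}^{-1}(y)$ are taken inside $\cS_r$. I will handle it through the local constancy in \cref{ass:regularity}(v) and the definition of $\cS_m$ as a common positive-probability support. Every $y\in\cS_m$ occurs as the suffix of some positive-probability length-$r$ word, since paths are taken of length at least $r$.

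Finally, the rank inequality follows from the chain rule. Let $V$ be the translation space of $\mathcal A_r$, so that $Dq_{\cQ}^{(r)}(\theta_0)$ takes values in $V$. Then
\begin{equation*}
Dq_{\cQ}^{(m)}(\theta_0)|_T=DG_{r,m}(p_0)\circ Dq_{\cQ}^{(r)}(\theta_0)|_T,
\end{equation*}
and the rank of a composition is at most the rank of its right factor.
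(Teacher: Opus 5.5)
Your proposal is correct and follows essentially the same route as the paper. Both arguments define $G_{r,m}$ by the explicit fiber-averaged formula on the neighborhood from \cref{lem:local-stability}, get $C^1$ regularity from \cref{lem:stationary} in an affine chart with positive denominators, check the row-sum and forced-zero constraints, identify $G_{r,m}(q_{\cQ}^{(r)}(\theta))$ with $q_{\cQ}^{(m)}(\theta)$ by pathwise truncation and total probability, and obtain the rank bound from the chain rule. Your forced-zero check via the compatibility $\Pi_{r,m}(U_r(z,a))=U_m(\Pi_{r,m}(z),a)$ is somewhat more explicit than the paper's about the direction actually needed for $G_{r,m}(p)\in\mathcal A_m$.
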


\begin{proof}
By \cref{prop:augmentation} and \cref{ass:regularity}\textup{(i),(v)}, for every $\theta$ near $\theta_0$ the process $Z^{(r)}$ is a finite-state Markov chain on the fixed state space $\cS_r$ with transition matrix
\begin{equation*}
p_\theta(z,z'):=P_{\theta,\mathrm{stat}}(Z_{t+1}^{(r)}=z'\mid Z_t^{(r)}=z), \qquad z,z'\in\cS_r.
\end{equation*}
In the exact-depth regime, the depth-$r$ informative map is exactly the flattened transition matrix, so $q_{\cQ}^{(r)}(\theta)=p_\theta\in\mathcal A_r$.
By \cref{lem:local-stability}, after shrinking to a suitable relative neighborhood $\mathcal U_r\subset\mathcal A_r$ of $q_{\cQ}^{(r)}(\theta_0)$, every array $p\in\mathcal U_r$ determines an irreducible stochastic matrix on the fixed support and every visible denominator used below is bounded away from $0$.
Shrinking the parameter neighborhood once more if necessary, I assume that
\begin{equation*}
q_{\cQ}^{(r)}(\theta)\in\mathcal U_r
\end{equation*}
for all parameters under consideration.
By the relative-affine convention stated in the introduction, the relative neighborhood $\mathcal U_r\subset\mathcal A_r$ may be viewed in any affine chart on $\mathcal A_r$.
Applying \cref{lem:stationary} in such a chart, the stationary distribution of the chain with transition matrix $p\in\mathcal U_r$ depends $C^1$-smoothly on $p$ as a relative-affine variable.
Denote it by $\pi(p)$.
Fix $y,y'\in\cS_m$.
For $p\in\mathcal U_r$ define
\begin{equation*}
\Gamma_{y,y'}(p):= \frac{\sum_{z\in\Pi_{r,m}^{-1}(y)}\sum_{z'\in\Pi_{r,m}^{-1}(y')}\pi(p)(z)p(z,z')} {\sum_{z\in\Pi_{r,m}^{-1}(y)}\pi(p)(z)}.
\end{equation*}
The denominator is strictly positive on $\mathcal U_r$ by construction, and the numerator and denominator are $C^1$ in $p$, hence each coordinate $\Gamma_{y,y'}$ is $C^1$ on $\mathcal U_r$.
Collecting these coordinates defines a map $G_{r,m}:\mathcal U_r\to\RR^{|\cS_m|^2}$.
I show that $G_{r,m}(p)\in\mathcal A_m$.
If no pair $(z,z')\in\Pi_{r,m}^{-1}(y)\times\Pi_{r,m}^{-1}(y')$ can occur under one admissible update of the depth-$r$ chain, then every term in the numerator vanishes, so the corresponding coordinate is a forced zero.
\begin{equation*}
\sum_{y'\in\cS_m}\Gamma_{y,y'}(p) = \frac{\sum_{z\in\Pi_{r,m}^{-1}(y)}\pi(p)(z)\sum_{z'\in\cS_r}p(z,z')} {\sum_{z\in\Pi_{r,m}^{-1}(y)}\pi(p)(z)}=1,
\end{equation*}
because each row of $p$ sums to $1$.
Thus $G_{r,m}(p)$ satisfies the forced-zero and row-sum constraints defining $\mathcal A_m$.
Now take $p=q_{\cQ}^{(r)}(\theta)$.
By \cref{prop:projection-general}, one has $Z_t^{(m)}=\Pi_{r,m}(Z_t^{(r)})$ pathwise.
Therefore the event $\{Z_t^{(m)}=y\}$ is the disjoint union of the events $\{Z_t^{(r)}=z\}$ over $z\in\Pi_{r,m}^{-1}(y)$, and similarly for $y'$.
Using stationarity and the law of total probability, Thus
\begin{equation*}
P_{\theta,\mathrm{stat}}(Z_{t+1}^{(m)}=y',Z_t^{(m)}=y) =
\sum_{z\in\Pi_{r,m}^{-1}(y)}\sum_{z'\in\Pi_{r,m}^{-1}(y')} \pi(p)(z)p(z,z').
\end{equation*}
Dividing by
\begin{equation*}
P_{\theta,\mathrm{stat}}(Z_t^{(m)}=y)=\sum_{z\in\Pi_{r,m}^{-1}(y)}\pi(p)(z)>0
\end{equation*}
shows that $\Gamma_{y,y'}(p)=q_{y,y'}^{(m)}(\theta)$.
Hence
\begin{equation*}
q_{\cQ}^{(m)}(\theta)=G_{r,m}(q_{\cQ}^{(r)}(\theta))
\end{equation*}
for all $\theta$ near $\theta_0$.
Differentiating this identity at $\theta_0$ and restricting to a tangent block $T$ gives
\begin{equation*}
Dq_{\cQ}^{(m)}(\theta_0)|_T = DG_{r,m}(q_{\cQ}^{(r)}(\theta_0))\circ Dq_{\cQ}^{(r)}(\theta_0)|_T.
\end{equation*}
Therefore
\begin{equation*}
\rank\bigl(Dq_{\cQ}^{(m)}(\theta_0)|_T\bigr) \le \rank\bigl(Dq_{\cQ}^{(r)}(\theta_0)|_T\bigr),
\end{equation*}
as claimed.

\end{proof}

\section{The edge-homogeneous regime}
\label{sec:edge}

\begin{proposition}[Visible Markov property under edge homogeneity]
\label{prop:edge-markov} Assume the model is edge-homogeneous near $\theta_0$ and satisfies \cref{ass:regularity}. Fix a visible depth $m\ge1$. Then, for every $\theta$ near $\theta_0$, the visible process $Z^{(m)}$ is a time-homogeneous first-order Markov chain. More precisely, if $y\in\cS_m$ has last edge $e$ and $a$ is admissible from $e$, then
\begin{equation}
P_\theta\bigl(Z_{t+1}^{(m)}=U_m(y,a)\mid Z_t^{(m)}=y\bigr)=\mu_\theta(a\mid e).
\end{equation}

\end{proposition}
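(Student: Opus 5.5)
The argument mirrors the proof of \cref{prop:augmentation}, with the edge-homogeneity identity playing the role of the exact-depth hypothesis. Fix $\theta$ in the neighborhood $U$ from the definition of edge homogeneity, intersected with the neighborhood of \cref{ass:regularity}. Let $\mathcal F_t$ be the sigma-field generated by the path history up to time $t$, and assume the current path length is at least $m\ge1$. Write $\omega_t$ for the current path and $e_t$ for its last edge. The next appended edge $a$ has conditional law $\mu_\theta(a\mid\omega_t)$ given $\mathcal F_t$. By edge homogeneity this equals $\mu_\theta(a\mid e_t)$. Since $m\ge1$, the last edge of $Z_t^{(m)}=R_m(\omega_t)$ is exactly $e_t$. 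Hence the conditional law of the appended edge given $\mathcal F_t$ is a function of $Z_t^{(m)}$ alone.

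Next I use the deterministic update. Appending $a$ to $\omega_t$ and taking the length-$m$ suffix gives the same result as appending $a$ to $R_m(\omega_t)$ and truncating, because suffixes of suffixes are suffixes, as in \cref{prop:projection-general}. Therefore $Z_{t+1}^{(m)}=U_m(Z_t^{(m)},a)$, with $U_m$ fixed on $U$ by \cref{ass:regularity}\textup{(v)}. Summing over the edges $a$ mapping to a given $y'$, for every $y'$,
\begin{equation*}
P_\theta\bigl(Z_{t+1}^{(m)}=y'\mid\mathcal F_t\bigr)=\sum_{a:\,U_m(Z_t^{(m)},a)=y'}\mu_\theta(a\mid e_t).
\end{equation*}
This depends only on $Z_t^{(m)}$ and not on $t$. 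Conditioning on $\sigma(Z_0^{(m)},\dots,Z_t^{(m)})\subset\mathcal F_t$ by the tower property then gives the time-homogeneous first-order Markov property.

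For the displayed formula, I show that $a\mapsto U_m(y,a)$ is injective on edges admissible from $e$. The new state $U_m(y,a)$ has last edge $a$, so distinct $a$ give distinct states. Hence the sum above for $y'=U_m(y,a)$ has the single term $\mu_\theta(a\mid e)$. Conditioning on the event $\{Z_t^{(m)}=y\}$, which has positive probability for $y\in\cS_m$ by \cref{ass:regularity}\textup{(iv)}, yields the claim. Because the conditional probability given $\mathcal F_t$ is already a function of $Z_t^{(m)}$, the identity holds under any initial law, including the stationary one.

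I do not expect a serious obstacle. The only points needing care are that $m\ge1$, so that the visible word actually contains the last edge, and that the injectivity of $U_m(y,\cdot)$ is what gives a single term rather than a sum.
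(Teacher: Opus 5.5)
Your proposal is correct and follows essentially the same route as the paper. The conditional law of the next edge depends only on the last edge $e$ of $Z_t^{(m)}$, and the next visible state is $U_m(Z_t^{(m)},a)$, with $U_m$ fixed by \cref{ass:regularity}\textup{(v)}; your tower-property step and your observation that $a\mapsto U_m(y,a)$ is injective (because the new state ends in $a$) make explicit what the paper leaves implicit. One small correction: \cref{ass:regularity}\textup{(iv)} only gives $P_\theta(Z_t^{(m)}=y)>0$ under the stationary law, so for other initial laws the displayed identity holds whenever that event has positive probability, not for every $y\in\cS_m$ as your wording suggests.
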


\begin{proof}
If $Z_t^{(m)}=y$, then the current path ends with the last edge $e$ of $y$.
By edge homogeneity the conditional law of the next appended edge depends only on $e$, not on the earlier past.
By \cref{ass:regularity}\textup{(v)}, the update map $U_m$ is fixed on the neighborhood under consideration.
Once the next edge is $a$, the next visible state is deterministically $U_m(y,a)$.

\end{proof}

\begin{corollary}[Stationary visible transitions under edge homogeneity]
\label{cor:edge-stationary} Under the hypotheses of \cref{prop:edge-markov}, if the process is started in stationarity, then for every visible state $y\in\cS_m$ with last edge $e$ and every admissible appended edge $a$ from $e$,
\begin{equation}
q_{y,U_m(y,a)}^{(m)}(\theta)=\mu_\theta(a\mid e)
\end{equation}
for all $\theta$ near $\theta_0$.

\end{corollary}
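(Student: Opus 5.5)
The plan is to reduce the stationary conditional probability to the pathwise one-step Markov transition supplied by \cref{prop:edge-markov}, using nothing beyond the definition of $q^{(m)}_{y,y'}$ and the positivity of the conditioning event.

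\textbf{Well-posedness.} Fix $\theta$ in the neighborhood where \cref{prop:edge-markov} holds and \cref{ass:regularity} is in force.
\begin{enumerate}[label=\textup{(\arabic*)}]
\item By \cref{ass:regularity}\textup{(iii)} the edge chain is irreducible on its fixed support, so it has a stationary law. Running the edge chain in stationarity, and fixing an admissible initial path of length at least $m$, makes $Z^{(m)}$ stationary. Alternatively, use directly the stationary visible law of $Z^{(m)}$, which is a finite-state Markov chain by \cref{prop:edge-markov}.
\item By \cref{ass:regularity}\textup{(iv)} the event $\{Z_t^{(m)}=y\}$ has strictly positive stationary probability. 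Hence $q^{(m)}_{y,y'}(\theta)$ is defined as an honest conditional probability under $P_{\theta,\mathrm{stat}}$.
\end{enumerate}

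\textbf{Reduction to the one-step transition.} The key identity is
\begin{equation*}
P_{\theta,\mathrm{stat}}\bigl(Z_{t+1}^{(m)}=U_m(y,a)\mid Z_t^{(m)}=y\bigr)=P_\theta\bigl(Z_{t+1}^{(m)}=U_m(y,a)\mid Z_t^{(m)}=y\bigr).
\end{equation*}
The left side uses the stationary initial distribution, and the right side is the time-homogeneous transition kernel of \cref{prop:edge-markov}. This holds because the one-step transition kernel of a time-homogeneous Markov chain does not depend on the initial law once the conditioning event has positive probability.

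To make this rigorous, I would condition on the full history $\mathcal F_t$. On $\{Z_t^{(m)}=y\}$ the current path ends in $e$, so by edge homogeneity the next edge is $a$ with conditional probability $\mu_\theta(a\mid e)$. I would then integrate over histories using the tower property.

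\textbf{Distinct edges give distinct successor states.} One point must be checked: if $a\neq a'$, then $U_m(y,a)\neq U_m(y,a')$. This holds because $U_m(y,a)$ ends in $a$. Therefore the event $\{Z_{t+1}^{(m)}=U_m(y,a)\}$ coincides, on $\{Z_t^{(m)}=y\}$, with the event that the appended edge is $a$. No other edges contribute probability to this target state, so the stationary transition probability is exactly $\mu_\theta(a\mid e)$.

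\textbf{Main obstacle.} The only genuinely delicate step is this last identification of $\{Z_{t+1}^{(m)}=U_m(y,a)\}$ with $\{\text{next edge}=a\}$. It is routine once one notes that the last edge of the new boundary word is the appended edge. Everything else is bookkeeping on stationarity and positivity.
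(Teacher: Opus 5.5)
Your proposal is correct and follows the same route as the paper, whose proof is the one-line remark that the identity is the stationary form of the transition identity in \cref{prop:edge-markov}. Your tower-property computation and your check that $U_m(y,a)$ ends in $a$ (so distinct appended edges give distinct successor states) spell that step out. One small slip: in step (1), fixing a deterministic initial path does not make $Z^{(m)}$ stationary. Your alternative, using the stationary law of the visible chain directly, suffices. So does the observation that the one-step conditional probability is the same under any initial law that charges $\{Z_t^{(m)}=y\}$.
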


\begin{proof}
This is the stationary form of the transition identity from \cref{prop:edge-markov}.

\end{proof}

\begin{theorem}[Homogeneous windows are locally equivalent]
\label{thm:homogeneous} Assume edge homogeneity near $\theta_0$ and \cref{ass:regularity}. Let $m,n\ge1$ be admissible depths. Assume moreover the following representation hypothesis: every admissible edge-extension pair $(e,a)$ arising near $\theta_0$ is represented at both depths, in the sense that for each such pair there exist states $y\in\cS_m$ and $\widetilde y\in\cS_n$ whose last edge is $e$ and for which appending $a$ yields $U_m(y,a)$ and $U_n(\widetilde y,a)$, respectively. Then there exist fixed linear maps
\begin{equation*}
G_{m\to n}:\RR^{|\cS_m|^2}\to\RR^{|\cS_n|^2}, \qquad G_{n\to m}:\RR^{|\cS_n|^2}\to\RR^{|\cS_m|^2},
\end{equation*}
such that
\begin{equation}
q_{\cQ}^{(n)}(\theta)=G_{m\to n}(q_{\cQ}^{(m)}(\theta)), \qquad q_{\cQ}^{(m)}(\theta)=G_{n\to
m}(q_{\cQ}^{(n)}(\theta))
\end{equation}
for all $\theta$ near $\theta_0$.
Consequently,
\begin{equation*}
\rank\bigl(Dq_{\cQ}^{(m)}(\theta_0)|_T\bigr)=\rank\bigl(Dq_{\cQ}^{(n)}(\theta_0)|_T\bigr)
\end{equation*}
for every tangent block $T\subset T_{\theta_0}\Theta$.

\end{theorem}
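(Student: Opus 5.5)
The argument runs through \cref{cor:edge-stationary}: in the edge-homogeneous regime every coordinate of $q_{\cQ}^{(m)}(\theta)$ is either a forced zero or equal to some edge-extension probability $\mu_\theta(a\mid e)$. Concretely, for $y,y'\in\cS_m$, either $y'=U_m(y,a)$ for some admissible $a$ from the last edge $e$ of $y$, in which case $q^{(m)}_{y,y'}(\theta)=\mu_\theta(a\mid e)$, or no such $a$ exists and the coordinate vanishes identically near $\theta_0$. The edge $a$ is determined by $y'$ as its last letter, so there is no ambiguity. By \cref{ass:regularity}\textup{(v)} the sets $\cS_m,\cS_n$, the updates $U_m,U_n$ and the forced-zero patterns are locally constant. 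Hence this dictionary between coordinates and pairs $(e,a)$ is a fixed combinatorial object, independent of $\theta$ near $\theta_0$.

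To construct $G_{m\to n}$, I use the representation hypothesis to fix, once and for all, for each admissible pair $(e,a)$ arising near $\theta_0$ a state $y_{(e,a)}\in\cS_m$ with last edge $e$. I then define $G_{m\to n}$ coordinatewise on $\RR^{|\cS_m|^2}$:
\begin{equation*}
G_{m\to n}(x)_{\widetilde y,\widetilde y'}:=\begin{cases}x_{y_{(e,a)},\,U_m(y_{(e,a)},a)} & \text{if } \widetilde y'=U_n(\widetilde y,a),\ e \text{ the last edge of }\widetilde y,\\ 0 & \text{otherwise.}\end{cases}
\end{equation*}
This is a coordinate selection followed by zero-padding, so it is linear and $\theta$-independent. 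Applying \cref{cor:edge-stationary} at both depths gives
\begin{equation*}
G_{m\to n}(q^{(m)}_{\cQ}(\theta))_{\widetilde y,\widetilde y'}=\mu_\theta(a\mid e)=q^{(n)}_{\widetilde y,\widetilde y'}(\theta)
\end{equation*}
on nonforced coordinates. Forced zeros match by construction. The map $G_{n\to m}$ is built symmetrically.

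The step needing care is the case split in the definition. A pair $(\widetilde y,\widetilde y')$ with $\widetilde y'=U_n(\widetilde y,a)$ might involve a pair $(e,a)$ that is admissible but does not arise near $\theta_0$, so $\mu_\theta(a\mid e)$ is a zero that could be forced only at one depth. I handle this by noting that whether $(e,a)$ has positive probability is a property of $\mu$ alone, not of the depth: a stationary visible state $\widetilde y$ with last edge $e$ has positive mass by \cref{ass:regularity}\textup{(iv)}. So the coordinate is nonzero near $\theta_0$ exactly when $(e,a)$ arises, and then the representation hypothesis supplies $y_{(e,a)}$. In the other case both sides vanish and the zero branch applies.

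For the rank statement, $q^{(m)}_{\cQ}$ and $q^{(n)}_{\cQ}$ are $C^1$ by \cref{prop:q-regularity}\textup{(ii)}, and $G$ is linear. Differentiating the two identities at $\theta_0$ and restricting to $T$ gives
\begin{equation*}
Dq^{(n)}_{\cQ}(\theta_0)|_T=G_{m\to n}\circ Dq^{(m)}_{\cQ}(\theta_0)|_T
\end{equation*}
and the reverse identity with $G_{n\to m}$. Each rank is therefore at most the other, so the two ranks are equal.
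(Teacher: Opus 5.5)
Your proof is correct and follows the paper's argument: your coordinatewise map $G_{m\to n}$ is exactly the paper's composite $F_nH_m$. Here $H_m$ selects the representing coordinates to recover the edge-level vector $\rho(\theta)=(\mu_\theta(a\mid e))_{(e,a)\in\mathcal I}$, and $F_n$ copies these into the depth-$n$ array with forced zeros elsewhere; the same two-sided rank comparison after differentiation then finishes the argument. Your explicit handling of admissible pairs that do not arise near $\theta_0$ is what the paper covers implicitly through the locally fixed forced-zero pattern.
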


\begin{remark}
The representation hypothesis in \cref{thm:homogeneous} is a genuine structural assumption.
It is automatic, for example, when every admissible last edge appears in at least one visible state at each depth under consideration and every admissible update from that edge remains visible after truncation.
Without it, equal-rank conclusions can fail simply because one visible depth omits coordinates encoding some edge-extension pair.

\end{remark}

\begin{remark}
I prove the theorem using explicit coordinate-copy and coordinate-selection maps.
In particular, it does not require the realized set $\{\rho(\theta):\theta\text{ near }\theta_0\}$ to span the ambient edge-law space $\RR^{|\mathcal I|}$.

\end{remark}

\begin{proof}
After shrinking the neighborhood of $\theta_0$ if necessary, fix the visible state spaces, the admissible update patterns, and the finite set $\mathcal I$ of admissible edge-extension pairs $(e,a)$ arising near $\theta_0$.
Define the edge-level vector
\begin{equation*}
\rho(\theta):=\bigl(\mu_\theta(a\mid e)\bigr)_{(e,a)\in\mathcal I}\in\RR^{|\mathcal I|}.
\end{equation*}
By \cref{cor:edge-stationary}, each coordinate of $q_{\cQ}^{(m)}(\theta)$ is either a forced zero or exactly one coordinate of $\rho(\theta)$.
Since the forced-zero pattern and admissible updates are fixed on the chosen neighborhood, there exists a fixed linear map
\begin{equation*}
F_m:\RR^{|\mathcal I|}\to\RR^{|\cS_m|^2}
\end{equation*}
such that
\begin{equation*}
q_{\cQ}^{(m)}(\theta)=F_m\rho(\theta) \qquad\text{for all $\theta$ near $\theta_0$.}
\end{equation*}
Likewise there exists a fixed linear map
\begin{equation*}
F_n:\RR^{|\mathcal I|}\to\RR^{|\cS_n|^2}
\end{equation*}
with
\begin{equation*}
q_{\cQ}^{(n)}(\theta)=F_n\rho(\theta).
\end{equation*}
I use the representation hypothesis to recover $\rho(\theta)$ from either visible depth by fixed coordinate-selection maps.
For each $(e,a)\in\mathcal I$, choose a state $y_{e,a}\in\cS_m$ and a state $\widetilde y_{e,a}\in\cS_n$ as in the statement, so that the distinguished transitions
\begin{equation*}
y_{e,a}\to U_m(y_{e,a},a), \qquad \widetilde y_{e,a}\to U_n(\widetilde y_{e,a},a)
\end{equation*}
record the same edge-level quantity $\mu_\theta(a\mid e)$.
Define linear coordinate-selection maps
\begin{equation*}
(H_mx)_{(e,a)}:=x_{y_{e,a},U_m(y_{e,a},a)}, \qquad (H_nx)_{(e,a)}:=x_{\widetilde
y_{e,a},U_n(\widetilde y_{e,a},a)}.
\end{equation*}
Then \cref{cor:edge-stationary} gives, for every $\theta$ near $\theta_0$,
\begin{equation*}
H_m(q_{\cQ}^{(m)}(\theta))=\rho(\theta), \qquad H_n(q_{\cQ}^{(n)}(\theta))=\rho(\theta).
\end{equation*}
Hence
\begin{equation*}
q_{\cQ}^{(n)}(\theta)=F_nH_m(q_{\cQ}^{(m)}(\theta)), \qquad
q_{\cQ}^{(m)}(\theta)=F_mH_n(q_{\cQ}^{(n)}(\theta)).
\end{equation*}
Thus the required factorizations hold with
\begin{equation*}
G_{m\to n}:=F_nH_m, \qquad G_{n\to m}:=F_mH_n.
\end{equation*}
Differentiating at $\theta_0$ and restricting to $T$ yields
\begin{equation*}
Dq_{\cQ}^{(n)}(\theta_0)|_T=G_{m\to n}\circ Dq_{\cQ}^{(m)}(\theta_0)|_T, \qquad
Dq_{\cQ}^{(m)}(\theta_0)|_T=G_{n\to m}\circ Dq_{\cQ}^{(n)}(\theta_0)|_T.
\end{equation*}
Hence each restricted derivative factors through the other.
The first identity gives
\begin{equation*}
\rank\bigl(Dq_{\cQ}^{(n)}(\theta_0)|_T\bigr) \le \rank\bigl(Dq_{\cQ}^{(m)}(\theta_0)|_T\bigr),
\end{equation*}
and the second gives the reverse inequality.
Therefore the two ranks coincide.

\end{proof}

\section{Reduced local coordinates}
\label{sec:reduced}
Reduced coordinates remove affine stochastic redundancies from the full transition array, and the rank statements are invariant under this passage.

\begin{definition}[Reduced coordinate chart]
Fix a visible depth $\ell$ and a neighborhood of $q_{\cQ}^{(\ell)}(\theta_0)$.
A \emph{reduced coordinate chart} for the visible transition family at depth $\ell$ is a linear map
\begin{equation*}
L_\ell:\RR^{|\cS_\ell|^2}\to\RR^{N_\ell}
\end{equation*}
whose restriction to the affine subspace of transition arrays satisfying the forced-zero and row-sum constraints is injective.
The reduced informative map is
\begin{equation*}
\bar q_{\cQ}^{(\ell)}:=L_\ell\circ q_{\cQ}^{(\ell)}.
\end{equation*}

\end{definition}

\begin{lemma}[Affine reconstruction from reduced coordinates]
\label{lem:affine-reconstruction} Fix a visible depth $\ell$ and let $\mathcal A_\ell$ be the affine subspace of arrays satisfying the forced-zero and row-sum constraints. If $L_\ell$ is a reduced coordinate chart, then $L_\ell(\mathcal A_\ell)$ is an affine subspace of $\RR^{N_\ell}$ and the inverse map
\begin{equation*}
(L_\ell|_{\mathcal A_\ell})^{-1}:L_\ell(\mathcal A_\ell)\to\mathcal A_\ell
\end{equation*}
is affine.
In particular, every full transition coordinate on $\mathcal A_\ell$ is an affine function of the reduced coordinates.

\end{lemma}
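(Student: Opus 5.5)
The plan is pure linear algebra: write $\mathcal A_\ell$ as a translate of its translation space and show that $L_\ell$ restricts to a linear isomorphism there. First I will check that $\mathcal A_\ell$ is nonempty. It contains $q_{\cQ}^{(\ell)}(\theta_0)$, or any admissible stochastic array on the fixed support. Pick a base point $x_0\in\mathcal A_\ell$ and set
\begin{equation*}
V_\ell:=\{v\in\RR^{|\cS_\ell|^2}: v \text{ vanishes on forced-zero coordinates and has zero row sums}\}.
\end{equation*}
Both constraint families are linear equalities (homogeneous for the forced zeros, inhomogeneous with right-hand side $1$ for the row sums). Hence $\mathcal A_\ell=x_0+V_\ell$ with $V_\ell$ a linear subspace.

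Next, linearity of $L_\ell$ gives $L_\ell(\mathcal A_\ell)=L_\ell x_0+L_\ell(V_\ell)$, which is an affine subspace of $\RR^{N_\ell}$ with translation space $L_\ell(V_\ell)$. I then transfer injectivity from $\mathcal A_\ell$ to $V_\ell$. If $v\in V_\ell$ and $L_\ell v=0$, then $x_0+v$ and $x_0$ both lie in $\mathcal A_\ell$ and have the same image, so $v=0$. Thus $L_\ell|_{V_\ell}:V_\ell\to L_\ell(V_\ell)$ is a linear bijection, and its inverse $M_\ell:=(L_\ell|_{V_\ell})^{-1}$ is linear.

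The inverse is then the explicit formula
\begin{equation*}
(L_\ell|_{\mathcal A_\ell})^{-1}(w)=x_0+M_\ell\bigl(w-L_\ell x_0\bigr),\qquad w\in L_\ell(\mathcal A_\ell).
\end{equation*}
It is well defined because $w-L_\ell x_0\in L_\ell(V_\ell)$. I will verify it is a two-sided inverse by applying $L_\ell$ to the right-hand side, and conversely by substituting $w=L_\ell(x_0+v)$. This exhibits the inverse as a linear map plus a translation, so it is affine. The final sentence of the lemma follows by composing with coordinate projections: each full coordinate $x_{y,y'}$ is a linear functional of the array, hence an affine function of $w$.

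There is no real obstacle here; the only point needing care is the injectivity transfer from the affine set to its translation space. To be thorough I would also note that the result does not depend on the choice of $x_0$, since any two choices differ by an element of $V_\ell$.
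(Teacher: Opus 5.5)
Your proposal is correct and follows the same route as the paper's proof. Both write $\mathcal A_\ell=x_0+V_\ell$, transfer injectivity of $L_\ell$ from $\mathcal A_\ell$ to $V_\ell$, and invert the resulting linear bijection; your nonemptiness check and explicit formula $x_0+M_\ell(w-L_\ell x_0)$ simply spell out details the paper leaves implicit.
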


\begin{proof}
Write $\mathcal A_\ell=x_0+V_\ell$, where $V_\ell$ is the translation space.
Since $L_\ell$ is linear, its image of $\mathcal A_\ell$ is affine.
Injectivity on $\mathcal A_\ell$ implies injectivity on $V_\ell$, hence $L_\ell|_{V_\ell}$ is a linear bijection onto its image.
The inverse on the affine set is therefore affine.

\end{proof}

\begin{lemma}[Rank invariance under reduced coordinates]
\label{lem:reduced-rank} For every visible depth $\ell$ and tangent block $T\subset T_{\theta_0}\Theta$,
\begin{equation}
\rank\bigl(D\bar q_{\cQ}^{(\ell)}(\theta_0)|_T\bigr) = \rank\bigl(Dq_{\cQ}^{(\ell)}(\theta_0)|_T\bigr).
\end{equation}

\end{lemma}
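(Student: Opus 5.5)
The plan is to reduce the statement to the fact that $L_\ell$ is injective on the translation space $V_\ell$ of $\mathcal A_\ell$. I will show that the image of $Dq_{\cQ}^{(\ell)}(\theta_0)|_T$ lies in $V_\ell$, and then conclude by the chain rule.

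Write $\mathcal A_\ell=x_0+V_\ell$ as in the proof of \cref{lem:affine-reconstruction}.

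The first step is to check that $q_{\cQ}^{(\ell)}(\theta)\in\mathcal A_\ell$ for all $\theta$ near $\theta_0$. This holds because the forced-zero pattern is locally constant by \cref{ass:regularity}, and each row of a stationary conditional transition law sums to $1$. The map $q_{\cQ}^{(\ell)}$ is $C^1$ near $\theta_0$ by \cref{prop:q-regularity}. Fix $h\in T$ and a $C^1$ curve $\gamma$ with $\gamma(0)=\theta_0$ and $\gamma'(0)=h$. Then $q_{\cQ}^{(\ell)}(\gamma(s))-q_{\cQ}^{(\ell)}(\theta_0)\in V_\ell$ for all small $s$. Since $V_\ell$ is a closed linear subspace, the difference quotient and hence its limit stay in $V_\ell$, so
\begin{equation*}
Dq_{\cQ}^{(\ell)}(\theta_0)h\in V_\ell .
\end{equation*}

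The second step applies the chain rule to the linear map $L_\ell$:
\begin{equation*}
D\bar q_{\cQ}^{(\ell)}(\theta_0)|_T=L_\ell\circ Dq_{\cQ}^{(\ell)}(\theta_0)|_T .
\end{equation*}
By the first step, the image of $Dq_{\cQ}^{(\ell)}(\theta_0)|_T$ lies in $V_\ell$. Injectivity of $L_\ell$ on $\mathcal A_\ell$ gives injectivity on $V_\ell$: if $v,w\in V_\ell$ and $L_\ell v=L_\ell w$, then $L_\ell(x_0+v)=L_\ell(x_0+w)$, hence $v=w$. So $L_\ell|_{V_\ell}$ is injective. Composing a linear map with an injective linear map defined on a space containing its image preserves the kernel, and therefore preserves the rank. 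This gives the claimed equality.

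The main obstacle is the first step: one must make sure that $q_{\cQ}^{(\ell)}$ really takes values in the fixed affine set $\mathcal A_\ell$ on a whole neighborhood, and not merely at $\theta_0$. This is exactly where the local constancy of the forced-zero pattern and of $\cS_\ell$, from \cref{ass:regularity}\textup{(i),(v)}, is used. The well-definedness of the conditional laws, from \cref{ass:regularity}\textup{(iv)}, guarantees the row-sum identity.
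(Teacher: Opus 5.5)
Your proposal is correct and follows the paper's own argument: the differential of $q_{\cQ}^{(\ell)}$ takes values in the translation space $V_\ell$, and $L_\ell$ is injective there, so composing with $L_\ell$ preserves rank. You also supply details the paper leaves implicit, namely that $q_{\cQ}^{(\ell)}$ maps a whole neighborhood into $\mathcal A_\ell$, the difference-quotient step, and the passage from injectivity on $\mathcal A_\ell$ to injectivity on $V_\ell$.
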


\begin{proof}
The image of $Dq_{\cQ}^{(\ell)}(\theta_0)$ lies in the translation space $V_\ell$ of the affine constraint set.
Since $L_\ell$ is injective on that translation space, composing $Dq_{\cQ}^{(\ell)}(\theta_0)|_T$ with $L_\ell$ does not change rank.

\end{proof}

\begin{remark}
Statements such as ``the only first-order varying coordinates'' are to be interpreted in reduced coordinates.
For the full transition array this is generally false because stochastic rows satisfy linear relations.

\end{remark}

\section{A branching example for exact depth and strict loss}

\begin{example}[Depth-two branching above a visible edge]
\label{ex:branching} Consider the quiver with vertices $\{0,1,2,3\}$ and edges
\begin{equation*}
b:0\to1,\qquad c:2\to1,\qquad a:1\to3,\qquad d:3\to0,\qquad e:3\to2.
\end{equation*}
Take a right-context model of exact depth $2$ in which the only free parameters are
\begin{equation*}
\eta_1:=\mu_\theta(d\mid ba),\qquad \eta_2:=\mu_\theta(d\mid ca),
\end{equation*}
with complementary probabilities $\mu_\theta(e\mid ba)=1-\eta_1$ and $\mu_\theta(e\mid ca)=1-\eta_2$.
All other depth-two transitions are deterministic:
\begin{equation*}
ad\to db,\qquad db\to ba,\qquad ae\to ec,\qquad ec\to ca.
\end{equation*}

\end{example}

\begin{proposition}[Exact rank computation]
\label{prop:example-rank} In the setting of \cref{ex:branching}, with parameter block $(\eta_1,\eta_2)\in(0,1)^2$:
\begin{enumerate}[label=\textup{(\alph*)}]
\item the depth-two chain on $\cS_2=\{ba,ad,db,ae,ec,ca\}$ is irreducible and has stationary distribution
\begin{equation*}
\pi(\eta_1,\eta_2)=\frac{1}{3(\eta_2+1-\eta_1)}(\eta_2,\eta_2,\eta_2,1-\eta_1,1-\eta_1,1-\eta_1),
\end{equation*}
\item in the reduced chart given by the free coordinates $\mu(d\mid ba)$ and $\mu(d\mid ca)$,
\begin{equation*}
\bar q_{\cQ}^{(2)}(\eta_1,\eta_2)=(\eta_1,\eta_2),
\end{equation*}
\item in the reduced depth-one chart with free coordinate $q_{a,d}^{(1)}$,
\begin{equation*}
\bar q_{\cQ}^{(1)}(\eta_1,\eta_2)=\frac{\eta_2}{\eta_2+1-\eta_1},
\end{equation*}
\item for every interior point $\theta_0=(\eta_1^0,\eta_2^0)$,
\begin{equation}
\rank D\bar q_{\cQ}^{(2)}(\theta_0)=2, \qquad \rank D\bar q_{\cQ}^{(1)}(\theta_0)=1.
\end{equation}
\end{enumerate}
if $D:=\eta_2^0+1-\eta_1^0$ and $h=(1-\eta_1^0,-\eta_2^0)$, then
\begin{equation}
D\bar q_{\cQ}^{(1)}(\theta_0)h=0, \qquad D\bar q_{\cQ}^{(2)}(\theta_0)h\neq0.
\end{equation}

\end{proposition}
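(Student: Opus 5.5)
The plan is to compute everything explicitly from the depth-two transition matrix and then invoke \cref{cor:factorization} only to identify the depth-one law. First I write down the depth-two chain on $\cS_2=\{ba,ad,db,ae,ec,ca\}$. Appending an edge to $ba$ gives $ad$ with probability $\eta_1$ and $ae$ with probability $1-\eta_1$. Appending to $ca$ gives $ad$ with probability $\eta_2$ and $ae$ with probability $1-\eta_2$. The remaining rows are deterministic: $ad\to db\to ba$ and $ae\to ec\to ca$. For irreducibility I exhibit the two cycles $ba\to ad\to db\to ba$ and $ca\to ae\to ec\to ca$, together with the cross transitions $ba\to ae$ (probability $1-\eta_1>0$) and $ca\to ad$ (probability $\eta_2>0$). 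These make every state reachable from every other when $(\eta_1,\eta_2)\in(0,1)^2$.

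For the stationary law I solve $\pi P=\pi$ directly. The deterministic rows force $\pi(ba)=\pi(db)=\pi(ad)$ and $\pi(ca)=\pi(ec)=\pi(ae)$. The balance equation at $ae$ reads $\pi(ae)=(1-\eta_1)\pi(ba)+(1-\eta_2)\pi(ca)$, which reduces to $\eta_2\pi(ca)=(1-\eta_1)\pi(ba)$. This gives $\pi\propto(\eta_2,\eta_2,\eta_2,1-\eta_1,1-\eta_1,1-\eta_1)$, and normalization yields the displayed constant. This proves (a). Part (b) is immediate, since by exact depth the depth-two informative map is the transition matrix itself. The only non-forced, non-redundant entries are the two rows at $ba$ and $ca$, and the row-sum constraints make $(\mu(d\mid ba),\mu(d\mid ca))$ an injective reduced chart on $\mathcal A_2$.

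For (c) I use the formula for $\Gamma_{y,y'}$ in the proof of \cref{cor:factorization} with $m=1$. Only the visible state $a$ has a nondegenerate row; its preimages are $ba$ and $ca$. This gives
\[
q^{(1)}_{a,d}=\frac{\pi(ba)\eta_1+\pi(ca)\eta_2}{\pi(ba)+\pi(ca)}=\frac{\eta_2\eta_1+(1-\eta_1)\eta_2}{\eta_2+1-\eta_1}=\frac{\eta_2}{\eta_2+1-\eta_1}.
\]
All other depth-one rows ($b\to a$, $c\to a$, $d\to b$, $e\to c$) are deterministic. Hence the single free coordinate $q^{(1)}_{a,d}$ gives a reduced chart. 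This is the step where care is needed, because the cancellation of $\eta_1$ in the numerator is the source of the rank drop.

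For (d), $D\bar q^{(2)}$ is the identity, so it has rank $2$. Differentiating $\eta_2/D$ with $D=\eta_2+1-\eta_1$ gives the gradient $\bigl(\eta_2/D^2,\,(1-\eta_1)/D^2\bigr)$. This is nonzero in the interior, and being a single row, the derivative has rank exactly $1$. Finally, pairing this gradient with $h=(1-\eta_1^0,-\eta_2^0)$ gives $\bigl(\eta_2^0(1-\eta_1^0)-(1-\eta_1^0)\eta_2^0\bigr)/D^2=0$. Meanwhile $D\bar q^{(2)}(\theta_0)h=h\neq0$, because $1-\eta_1^0>0$. By \cref{lem:reduced-rank} these reduced ranks agree with the full ones.
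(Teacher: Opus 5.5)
Your proof is correct and follows essentially the same route as the paper. The deterministic rows collapse the stationary equations to $\pi(ba)=\pi(ad)=\pi(db)$ and $\pi(ae)=\pi(ec)=\pi(ca)$, the single balance relation $(1-\eta_1)\pi(ba)=\eta_2\pi(ca)$ fixes the ratio, and $q^{(1)}_{a,d}$ comes from averaging the fine-depth probabilities over the hidden fiber $\{ba,ca\}$ with stationary weights. Your explicit irreducibility check is a useful addition that the paper leaves implicit.

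One minor correction to your commentary: the rank drop does not come from the cancellation of $\eta_1$ in the numerator. It is automatic, because the depth-one affine family is one-dimensional, so the reduced depth-one map is scalar-valued. The cancellation only simplifies the formula from which the kernel direction $h$ is read off.
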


\begin{proof}
The stationary equations imply $x_1=x_2=x_3$ and $x_4=x_5=x_6$ for the ordered state list $(ba,ad,db,ae,ec,ca)$.
The balance relation $x_1(1-\eta_1)=x_6\eta_2$ gives the ratio $x_1:x_6=\eta_2:(1-\eta_1)$, and normalization then yields the stated stationary law.
Part \textup{(b)} is immediate from the reduced depth-two chart.
For part \textup{(c)}, the visible depth-one state $a$ has hidden fiber $\{ba,ca\}$, so under stationarity
\begin{equation*}
P_{\theta,\mathrm{stat}}(ba\mid a)=\frac{\eta_2}{\eta_2+1-\eta_1}, \qquad
P_{\theta,\mathrm{stat}}(ca\mid a)=\frac{1-\eta_1}{\eta_2+1-\eta_1}.
\end{equation*}
Multiplying by the corresponding fine-depth probabilities of moving to $d$ yields
\begin{equation*}
q_{a,d}^{(1)}(\eta_1,\eta_2)=\frac{\eta_2}{\eta_2+1-\eta_1}.
\end{equation*}
Differentiating gives
\begin{equation*}
D\bar q_{\cQ}^{(2)}(\theta_0)=I_2, \qquad D\bar
q_{\cQ}^{(1)}(\theta_0)=\frac{1}{D^2}(\eta_2^0,1-\eta_1^0),
\end{equation*}
from which the rank statements and the kernel direction follow.

\end{proof}

\begin{corollary}[The depth-two example fits the single-coordinate strict-loss criterion]
\label{cor:example-fits} In the setting of \cref{ex:branching}, fix an interior point $\theta_0=(\eta_1^0,\eta_2^0)$ and let $T_0:=\RR^2$ be the natural two-dimensional parameter block. Then the pair consisting of the visible depth-one state $y=a$ and the appended edge $d$ satisfies the hypotheses of \cref{cor:single-coordinate-branching}. Consequently, the depth-one window is not first-order locally sufficient relative to depth two on $T_0$.

\end{corollary}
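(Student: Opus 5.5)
The plan is to verify, one by one, the hypotheses that \cref{cor:single-coordinate-branching} will place on a pair (coarse visible state, appended edge), using the explicit computations of \cref{prop:example-rank}. I expect these hypotheses to be: exact depth $r$, \cref{ass:regularity}, a single varying reduced coarse coordinate, a nontrivial fiber over $y$, and strict rank loss. The final non-sufficiency claim is then read off from that corollary. As a cross-check independent of its exact formulation, I will also give the direct kernel argument through \cref{lem:suff-equivalent} and the definition of first-order local sufficiency.

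\textbf{Structural hypotheses.} First I check exact depth $2$ at every interior $\theta_0$. Part (a) holds by construction, since the model is specified by depth-two contexts. For part (b) with $k=1$, I take the contexts $ba$ and $ca$, which share the suffix $R_1=a$, together with the common admissible edge $d$. The maps $\theta\mapsto\eta_1$ and $\theta\mapsto\eta_2$ are distinct coordinate functions, so they do not agree on any neighborhood of $\theta_0$.

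Next I check \cref{ass:regularity}. The depth-two support $\cS_2=\{ba,ad,db,ae,ec,ca\}$ and its forced-zero pattern are constant on $(0,1)^2$. The nonzero entries $\eta_1,1-\eta_1,\eta_2,1-\eta_2,1$ are smooth. Irreducibility and strict positivity of $\pi$ come from \cref{prop:example-rank}(a). The depth-one visible masses are sums of positive entries of $\pi$, so they are bounded away from $0$ near $\theta_0$.

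\textbf{Coordinate hypotheses.} The depth-one support is $\cS_1=\{a,b,c,d,e\}$. From the depth-two updates, the visible states $b,c,d,e$ have deterministic successors: $b\to a$, $c\to a$, $d\to b$, $e\to c$. Hence every full depth-one coordinate except those in row $a$ is locally constant. In row $a$ the row-sum constraint gives $q^{(1)}_{a,e}=1-q^{(1)}_{a,d}$. So in reduced coordinates the single varying coordinate is $q^{(1)}_{a,d}$, indexed by the pair $y=a$ and edge $d$. Its hidden fiber is $\Pi_{2,1}^{-1}(a)=\{ba,ca\}$. It carries the two distinct fine extension laws $\eta_1\neq\eta_2$ as functions, and mixes them with the $\theta$-dependent stationary weights computed in \cref{prop:example-rank}(c). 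This is exactly the branching configuration of the criterion.

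\textbf{Rank and kernel.} By \cref{prop:example-rank}(d) and \cref{lem:reduced-rank}, $\rank Dq^{(1)}_{\cQ}(\theta_0)=1<2=\rank Dq^{(2)}_{\cQ}(\theta_0)$ on $T_0$. The displayed vector $h=(1-\eta_1^0,-\eta_2^0)$ lies in $\Ker Dq^{(1)}_{\cQ}(\theta_0)|_{T_0}$ but not in $\Ker Dq^{(2)}_{\cQ}(\theta_0)|_{T_0}$, because the fine derivative is $I_2$ in reduced coordinates. The kernel inclusion defining first-order local sufficiency therefore fails, and this is the claimed conclusion.

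The main obstacle is that \cref{cor:single-coordinate-branching} is stated later, so the plan must match its exact hypotheses. I expect it to require either that the gradient of the single coarse coordinate is nonzero while the fine rank is $2$, or that the fine-depth gradients of the fiber laws are linearly independent. Both follow from $D\bar q^{(1)}_{\cQ}(\theta_0)=D^{-2}(\eta_2^0,1-\eta_1^0)\neq0$ and $D\bar q^{(2)}_{\cQ}(\theta_0)=I_2$. If the hypotheses turn out to be phrased differently, the direct kernel argument above still gives the conclusion.
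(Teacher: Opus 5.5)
Your proposal is correct and follows essentially the paper's route: hypotheses (b) and (c) of \cref{cor:single-coordinate-branching} come from \cref{prop:example-rank} together with \cref{lem:reduced-rank}, and your row-by-row observation that every full depth-one coordinate is either constant or equal to $1-q^{(1)}_{a,d}$ is exactly the content the paper extracts from \cref{lem:affine-reconstruction}; your extra checks of exact depth and \cref{ass:regularity}, and the direct kernel argument via $h$, are harmless additions. The one thing to state explicitly is that differentiating this observation gives hypothesis (a), $\Ker\bigl(Dq^{(1)}_{a,d}(\theta_0)|_{T_0}\bigr)\subset\Ker\bigl(Dq^{(1)}_{\cQ}(\theta_0)|_{T_0}\bigr)$, because this kernel inclusion, not a nontrivial fiber or strict rank loss, is what the corollary actually requires.
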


\begin{proof}
By \cref{prop:example-rank}, one has $\rank D\bar q_{\cQ}^{(2)}(\theta_0)=2$.
By \cref{lem:reduced-rank}, the same full-rank statement holds for the full derivative $Dq_{\cQ}^{(2)}(\theta_0)|_{T_0}$.
The selected depth-one coordinate is $q_{a,d}^{(1)}$, whose derivative is nonzero by \cref{prop:example-rank}.
It therefore remains to verify the factorization condition, equivalently the kernel inclusion, required in \cref{cor:single-coordinate-branching}.
Since the reduced depth-one informative map has only the single free coordinate $q_{a,d}^{(1)}$, every full depth-one coordinate is an affine function of $q_{a,d}^{(1)}$ by \cref{lem:affine-reconstruction}, passing to differentials shows that the full derivative $Dq_{\cQ}^{(1)}(\theta_0)|_{T_0}$ factors through the single-coordinate derivative $Dq_{a,d}^{(1)}(\theta_0)|_{T_0}$.
Equivalently,
\begin{equation*}
\Ker\bigl(Dq_{a,d}^{(1)}(\theta_0)|_{T_0}\bigr) \subset \Ker\bigl(Dq_{\cQ}^{(1)}(\theta_0)|_{T_0}\bigr).
\end{equation*}
Hence all hypotheses are satisfied, and the conclusion follows.

\end{proof}

\begin{remark}
\Cref{cor:example-fits} shows that the worked example falls under the general strict-loss theory.
Frozen stationary fiber weights are not needed; the parameter dependence of the fiber weights is absorbed by the product-rule formula in \cref{lem:coord-fiber} and the general selected-coordinate criterion.

\end{remark}

\section{Strict-loss criteria}
\label{sec:branching}

\begin{definition}[Hidden fiber over a visible state]
Fix $m<r$ and a visible state $y\in\cS_m$.
The corresponding depth-$r$ hidden fiber is
\begin{equation*}
F_y:=\Pi_{r,m}^{-1}(y)\subset\cS_r.
\end{equation*}

\end{definition}

\begin{lemma}[Smooth conditional fiber weights]
\label{lem:fiber-weights} Assume exact depth $r$ at $\theta_0$ together with \cref{ass:regularity}, and fix $m<r$. Let $y\in\cS_m$ and $z\in F_y$. Then, for $\theta$ near $\theta_0$, the stationary conditional weight
\begin{equation}
\alpha_z(\theta):=P_{\theta,\mathrm{stat}}(Z_t^{(r)}=z\mid Z_t^{(m)}=y)
\end{equation}
is well defined and $C^1$ in $\theta$.
More explicitly,
\begin{equation}
\alpha_z(\theta)=\frac{\pi_\theta(z)}{\sum_{w\in F_y}\pi_\theta(w)},
\end{equation}
where $\pi_\theta$ is the stationary law of the depth-$r$ chain.

\end{lemma}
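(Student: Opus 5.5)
The plan is to write $\alpha_z$ as a ratio of stationary masses of the depth-$r$ chain and then invoke \cref{lem:stationary} for smoothness. By \cref{prop:augmentation}, for $\theta$ near $\theta_0$ the process $Z^{(r)}$ is a time-homogeneous Markov chain on the fixed support $\cS_r$. Its transition matrix $p_\theta=q_{\cQ}^{(r)}(\theta)$ has entries that are either forced zeros or nonzero extension probabilities, and the latter are $C^1$ by \cref{ass:regularity}\textup{(ii)}. By \cref{ass:regularity}\textup{(iii)} this matrix is irreducible on $\cS_r$. Therefore \cref{lem:stationary} applies and gives a unique stationary law $\pi_\theta$ depending $C^1$-smoothly on $\theta$.

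Next I would derive the explicit formula. By \cref{prop:projection-general}, $Z_t^{(m)}=\Pi_{r,m}(Z_t^{(r)})$ pathwise. Hence the event $\{Z_t^{(m)}=y\}$ is the disjoint union of the events $\{Z_t^{(r)}=w\}$ over $w\in F_y$, and for $z\in F_y$ one has $\{Z_t^{(r)}=z\}\subset\{Z_t^{(m)}=y\}$. Under stationarity this gives
\begin{equation*}
P_{\theta,\mathrm{stat}}(Z_t^{(m)}=y)=\sum_{w\in F_y}\pi_\theta(w),\qquad P_{\theta,\mathrm{stat}}(Z_t^{(r)}=z,\,Z_t^{(m)}=y)=\pi_\theta(z).
\end{equation*}
Dividing the second identity by the first yields the displayed formula for $\alpha_z(\theta)$.

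The only step needing care is well-definedness: the denominator must be positive uniformly near $\theta_0$. By \cref{ass:regularity}\textup{(iv)}, the visible stationary mass of $y$, which equals this denominator, is bounded away from $0$ on a neighborhood of $\theta_0$. Alternatively, one can pull the positivity in \cref{lem:local-stability} back along the continuous map $\theta\mapsto q_{\cQ}^{(r)}(\theta)$. On that neighborhood $\alpha_z$ is a quotient of $C^1$ functions with nonvanishing denominator, so it is $C^1$. The remaining formalities are minor: fixing the neighborhood so that the support and update map are constant, via \cref{ass:regularity}\textup{(i),(v)}, and working on paths of length at least $r$.
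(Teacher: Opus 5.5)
Your proposal is correct and follows essentially the same route as the paper. The pathwise truncation identity turns the joint event into $\{Z_t^{(r)}=z\}$ and the conditioning event into the fiber mass, \cref{ass:regularity}\textup{(iv)} gives a positive denominator, and \cref{lem:stationary} gives $C^1$ smoothness. The only difference is cosmetic: you apply \cref{lem:stationary} directly to the $C^1$ irreducible family $\theta\mapsto p_\theta$, whereas the paper invokes it through the relative-affine convention on the transition-array space.
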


\begin{proof}
Under stationarity, $Z_t^{(m)}$ is the truncation of $Z_t^{(r)}$, so the joint event $\{Z_t^{(r)}=z,\,Z_t^{(m)}=y\}$ equals $\{Z_t^{(r)}=z\}$ whenever $z\in F_y$.
The denominator is positive by \cref{ass:regularity}\textup{(iv)}, and $C^1$ smoothness follows from \cref{lem:stationary} together with the relative-affine convention introduced earlier.

\end{proof}

\begin{theorem}[Sharp blockwise characterization of strict coarse-depth loss]
\label{thm:strict-loss-characterization} Fix $m<r$ and assume exact depth $r$ at $\theta_0$ together with \cref{ass:regularity}. Let $T\subset T_{\theta_0}\Theta$ be a tangent block and let $T_0\subset T$ be a linear subspace of dimension $p\ge1$. Assume
\begin{equation}
\rank\bigl(Dq_{\cQ}^{(r)}(\theta_0)|_{T_0}\bigr)=p.
\end{equation}
Then the following are equivalent:
\begin{enumerate}[label=\textup{(\roman*)}]
\item the depth-$m$ window is not first-order locally sufficient relative to depth $r$ on $T_0$,
\item there exists a nonzero vector $h\in T_0$ such that
\begin{equation*}
Dq_{\cQ}^{(m)}(\theta_0)h=0, \qquad Dq_{\cQ}^{(r)}(\theta_0)h\neq0,
\end{equation*}
\item \begin{equation*}
\Ker\bigl(Dq_{\cQ}^{(m)}(\theta_0)|_{T_0}\bigr)\neq\{0\},
\end{equation*}
\item \begin{equation*}
\rank\bigl(Dq_{\cQ}^{(m)}(\theta_0)|_{T_0}\bigr)<p.
\end{equation*}
\end{enumerate}

\end{theorem}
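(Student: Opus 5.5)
The plan is to prove the cycle of implications (i)$\Rightarrow$(ii)$\Rightarrow$(iii)$\Rightarrow$(iv)$\Rightarrow$(i). Throughout, the key input is the full-rank hypothesis $\rank(Dq_{\cQ}^{(r)}(\theta_0)|_{T_0})=p=\dim T_0$. This hypothesis says precisely that
\begin{equation*}
\Ker\bigl(Dq_{\cQ}^{(r)}(\theta_0)|_{T_0}\bigr)=\{0\}.
\end{equation*}
Once this is recorded, the theorem reduces to linear algebra on the finite-dimensional space $T_0$. The argument does not depend on the chart, by \cref{prop:chart-invariance}.

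For (i)$\Rightarrow$(ii), unwind the definition of first-order local sufficiency on $T_0$. Its failure means there is some $h\in T_0$ with
\begin{equation*}
h\in\Ker\bigl(Dq_{\cQ}^{(m)}(\theta_0)|_{T_0}\bigr),\qquad h\notin\Ker\bigl(Dq_{\cQ}^{(r)}(\theta_0)|_{T_0}\bigr).
\end{equation*}
Such an $h$ is automatically nonzero, which gives (ii).

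For (ii)$\Rightarrow$(iii), the vector $h$ from (ii) is itself a nonzero element of the depth-$m$ kernel.

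For (iii)$\Rightarrow$(iv), apply rank–nullity on $T_0$:
\begin{equation*}
\rank\bigl(Dq_{\cQ}^{(m)}(\theta_0)|_{T_0}\bigr)=p-\dim\Ker\bigl(Dq_{\cQ}^{(m)}(\theta_0)|_{T_0}\bigr),
\end{equation*}
so a nonzero kernel forces the rank to be strictly below $p$.

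For (iv)$\Rightarrow$(i), rank–nullity read the other way shows that rank below $p$ gives a nonzero $h\in\Ker(Dq_{\cQ}^{(m)}(\theta_0)|_{T_0})$. Because the depth-$r$ kernel on $T_0$ is trivial, this $h$ is not in $\Ker(Dq_{\cQ}^{(r)}(\theta_0)|_{T_0})$. The kernel inclusion therefore fails, and sufficiency fails on $T_0$.

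None of these steps is a real obstacle. The only point needing care is to keep every kernel and rank restricted to $T_0$ rather than to $T$, since the full-rank hypothesis is imposed only on $T_0$. I will also note that exact depth and \cref{ass:regularity} enter only through \cref{prop:q-regularity}, which ensures both differentials exist. By \cref{cor:factorization}, condition (iv) is then the same as a strict rank drop from depth $r$ to depth $m$ on $T_0$, because $\rank(Dq_{\cQ}^{(m)}(\theta_0)|_{T_0})\le p$ always holds.
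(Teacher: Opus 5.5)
Your proof is correct and matches the paper's argument: the rank hypothesis makes $Dq_{\cQ}^{(r)}(\theta_0)|_{T_0}$ injective, rank--nullity is applied on $T_0$, and your (iv)$\Rightarrow$(i) just merges the paper's (iv)$\Rightarrow$(iii)$\Rightarrow$(i). Your closing appeal to \cref{cor:factorization} is unnecessary, since $\rank\bigl(Dq_{\cQ}^{(m)}(\theta_0)|_{T_0}\bigr)\le p$ holds trivially because $T_0$ is $p$-dimensional.
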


\begin{proof}
Set
\begin{equation*}
L_m:=Dq_{\cQ}^{(m)}(\theta_0)|_{T_0}, \qquad L_r:=Dq_{\cQ}^{(r)}(\theta_0)|_{T_0}.
\end{equation*}
The rank assumption implies that $L_r$ is injective on the $p$-dimensional space $T_0$, hence $\Ker(L_r)=\{0\}$.
\smallskip \noindent\textup{(i)}$\Rightarrow$\textup{(ii)}. Failure of first-order local sufficiency means $\Ker(L_m)\not\subset\Ker(L_r)$, so there exists $h\in\Ker(L_m)$ with $h\notin\Ker(L_r)$. Then $L_mh=0$ and $L_rh\neq0$, and in particular $h\neq0$.
\smallskip \noindent\textup{(ii)}$\Rightarrow$\textup{(iii)}. A nonzero vector $h$ with $L_mh=0$ lies in $\Ker(L_m)$, so that kernel is nontrivial.
\smallskip \noindent\textup{(iii)}$\Rightarrow$\textup{(iv)}. If $\Ker(L_m)$ is nontrivial, then rank--nullity gives
\begin{equation*}
\rank(L_m)=p-\dim\Ker(L_m)<p.
\end{equation*}
\smallskip \noindent\textup{(iv)}$\Rightarrow$\textup{(iii)}. If $\rank(L_m)<p$, then rank--nullity gives
\begin{equation*}
\dim\Ker(L_m)=p-\rank(L_m)>0,
\end{equation*}
so $\Ker(L_m)$ is nontrivial.
\smallskip \noindent\textup{(iii)}$\Rightarrow$\textup{(i)}. Choose $0\neq h\in\Ker(L_m)$. Since $L_r$ is injective, $L_rh\neq0$, so $h\notin\Ker(L_r)$. Therefore $\Ker(L_m)\not\subset\Ker(L_r)$, which is exactly failure of first-order local sufficiency.

\end{proof}

\begin{remark}
\Cref{thm:strict-loss-characterization} is the sharp deterministic statement on a tangent block where the depth-$r$ derivative is injective: strict coarse-depth loss is equivalent to coarse rank loss.
Thus the later certification criteria provide practical ways to verify the hypotheses of this characterization.

\end{remark}

\begin{theorem}[Intrinsic quotient-space characterization of strict coarse-depth loss]
\label{thm:quotient-strict-loss} Fix $m<r$ and assume exact depth $r$ at $\theta_0$ together with \cref{ass:regularity}. Let $T\subset T_{\theta_0}\Theta$ be a tangent block. Set
\begin{equation}
L_m:=Dq_{\cQ}^{(m)}(\theta_0)|_T, \qquad L_r:=Dq_{\cQ}^{(r)}(\theta_0)|_T, \qquad K_r:=\Ker(L_r).
\end{equation}
Let $\pi:T\to T/K_r$ be the quotient map, and let
\begin{equation*}
\widetilde L_m, \widetilde L_r:T/K_r\to \RR^N
\end{equation*}
denote the unique linear maps induced by $L_m$ and $L_r$, where $N$ is any ambient coordinate dimension containing the relevant images.
Then the following hold:
\begin{enumerate}[label=\textup{(\roman*)}]
\item $\widetilde L_r$ is injective,
\item the depth-$m$ window is first-order locally sufficient relative to depth $r$ on $T$ if and only if
\begin{equation}
\Ker(\widetilde L_m)=\{0\},
\end{equation}
\item the depth-$m$ window is not first-order locally sufficient relative to depth $r$ on $T$ if and only if
\begin{equation}
\rank(\widetilde L_m)<\dim(T/K_r),
\end{equation}
\item the depth-$m$ window is not first-order locally sufficient relative to depth $r$ on $T$ if and only if
\begin{equation}
\rank\bigl(Dq_{\cQ}^{(m)}(\theta_0)|_T\bigr)<\rank\bigl(Dq_{\cQ}^{(r)}(\theta_0)|_T\bigr).
\end{equation}
\end{enumerate}

\end{theorem}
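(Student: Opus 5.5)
The key input is the factorization from \cref{cor:factorization}: differentiating $q_{\cQ}^{(m)}=G_{r,m}\circ q_{\cQ}^{(r)}$ at $\theta_0$ gives
\begin{equation*}
L_m=B\circ L_r,\qquad B:=DG_{r,m}(q_{\cQ}^{(r)}(\theta_0)),
\end{equation*}
so $K_r=\Ker(L_r)\subset\Ker(L_m)$. This inclusion guarantees that both $L_m$ and $L_r$ vanish on $K_r$, and therefore descend uniquely to linear maps $\widetilde L_m,\widetilde L_r$ on $T/K_r$ with $L_m=\widetilde L_m\circ\pi$ and $L_r=\widetilde L_r\circ\pi$. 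I will record this well-definedness first; everything afterwards is linear algebra on the quotient.

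For (i), if $\widetilde L_r(\pi h)=0$ then $L_rh=0$, so $h\in K_r$ and $\pi h=0$. For (ii), the main step is the identity $\Ker(\widetilde L_m)=\pi(\Ker L_m)$, which holds because $\pi$ is surjective. Sufficiency means $\Ker L_m\subset\Ker L_r=K_r$. Combined with the reverse inclusion from the factorization, sufficiency is equivalent to $\Ker L_m=K_r$, that is, to $\pi(\Ker L_m)=\{0\}$, that is, to $\Ker\widetilde L_m=\{0\}$.

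For (iii), I take the negation of (ii) and apply rank–nullity on the finite-dimensional space $T/K_r$: $\Ker\widetilde L_m\neq\{0\}$ if and only if $\rank\widetilde L_m<\dim(T/K_r)$. Alternatively, I could invoke \cref{thm:strict-loss-characterization} with $T_0$ a complement of $K_r$ in $T$, identified with $T/K_r$. The direct argument is cleaner, so I will use that.

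For (iv), surjectivity of $\pi$ gives $\rank\widetilde L_m=\rank L_m$. Injectivity from (i) gives $\rank L_r=\rank\widetilde L_r=\dim(T/K_r)$. Substituting these into (iii) yields (iv).

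The only step that is not purely formal is the inclusion $K_r\subset\Ker L_m$, since it is what makes $\widetilde L_m$ well defined. I will therefore state explicitly that it comes from the chain rule applied to \cref{cor:factorization}, on the parameter neighborhood where $q_{\cQ}^{(r)}(\theta)\in\mathcal U_r$. The differential of $G_{r,m}$ is taken in the relative-affine sense, and the image of $L_r$ lies in the translation space of $\mathcal A_r$, so the composition is legitimate. The ambient dimension $N$ plays no role.
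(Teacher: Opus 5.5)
Your proposal is correct and follows the paper's proof essentially step for step. The factorization from \cref{cor:factorization} gives $K_r\subset\Ker L_m$, so both maps descend to $T/K_r$, and (i)--(iv) then follow from $\Ker\widetilde L_m=\pi(\Ker L_m)$, rank--nullity on $T/K_r$, $\rank\widetilde L_m=\rank L_m$ by surjectivity of $\pi$, and $\rank L_r=\dim(T/K_r)$ by (i); your remark that $DG_{r,m}$ is taken in the relative-affine sense on the translation space of $\mathcal A_r$ is precision the paper already handles inside \cref{cor:factorization}.
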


\begin{proof}
Since $K_r=\Ker(L_r)$, the universal property of the quotient gives a unique linear map
\begin{equation*}
\widetilde L_r:T/K_r\to\im(L_r)
\end{equation*}
with $L_r=\widetilde L_r\circ\pi$.
If $\widetilde L_r([h])=0$, then $L_rh=0$, so $h\in K_r$ and therefore $[h]=0$ in $T/K_r$.
Thus $\widetilde L_r$ is injective, proving \textup{(i)}.
Because exact depth implies the factorization statement in \cref{cor:factorization}, one has
\begin{equation*}
\Ker(L_m)\supset K_r.
\end{equation*}
Hence $L_m$ also factors uniquely through the quotient, yielding
\begin{equation*}
L_m=\widetilde L_m\circ\pi.
\end{equation*}
I now prove the equivalences.
For \textup{(ii)}, first-order local sufficiency on $T$ means exactly
\begin{equation*}
\Ker(L_m)\subset\Ker(L_r)=K_r.
\end{equation*}
Since already $K_r\subset\Ker(L_m)$, this is equivalent to
\begin{equation*}
\Ker(L_m)=K_r.
\end{equation*}
Under the quotient correspondence,
\begin{equation*}
\Ker(\widetilde L_m)=\pi(\Ker(L_m))=\Ker(L_m)/K_r.
\end{equation*}
Therefore $\Ker(\widetilde L_m)=\{0\}$ if and only if $\Ker(L_m)=K_r$, proving \textup{(ii)}.
For \textup{(iii)}, by \textup{(i)} the space $T/K_r$ has dimension
\begin{equation*}
\dim(T/K_r)=\rank(L_r).
\end{equation*}
By rank--nullity applied to $\widetilde L_m:T/K_r\to\RR^N$, the kernel of $\widetilde L_m$ is nontrivial if and only if
\begin{equation*}
\rank(\widetilde L_m)<\dim(T/K_r).
\end{equation*}
Combining this with \textup{(ii)} proves \textup{(iii)}.
For \textup{(iv)}, since $L_m=\widetilde L_m\circ\pi$ and $\pi$ is surjective, one has
\begin{equation*}
\im(L_m)=\im(\widetilde L_m), \qquad \text{hence} \qquad \rank(L_m)=\rank(\widetilde L_m).
\end{equation*}
Likewise, because $\widetilde L_r$ is injective on $T/K_r$,
\begin{equation*}
\rank(L_r)=\dim(T/K_r).
\end{equation*}
Substituting these identities into \textup{(iii)} gives exactly
\begin{equation*}
\rank(L_m)<\rank(L_r).
\end{equation*}
This proves \textup{(iv)}.

\end{proof}

\begin{remark}
\Cref{thm:quotient-strict-loss} removes the auxiliary injectivity hypothesis from \cref{thm:strict-loss-characterization} without enlarging the conclusion beyond what the exact-depth factorization justifies.
On an arbitrary tangent block $T$, the only directions discarded are those already invisible at depth $r$.

\end{remark}

\begin{definition}[Coordinate map]
\label{def:coordinate-map} Fix a finite family
\begin{equation*}
I=\{(y_j,a_j):1\le j\le s\},
\end{equation*}
where each $y_j\in\cS_m$ is a visible state and each $a_j$ is an admissible appended edge from the terminal vertex of $y_j$.
Writing $y_{j,a}:=U_m(y_j,a_j)$, define
\begin{equation}
\Phi_I^{(m)}(\theta):=\bigl(q_{y_j,y_{j,a}}^{(m)}(\theta)\bigr)_{j=1}^s\in\RR^s.
\end{equation}

\end{definition}

\begin{lemma}[Equivalent factorization through selected coordinates]
\label{lem:selected-factorization} Fix $m<r$ and a tangent subspace $T_0\subset T_{\theta_0}\Theta$. Let
\begin{equation}
M_I:=D\Phi_I^{(m)}(\theta_0)|_{T_0}:T_0\to\RR^s.
\end{equation}
The following are equivalent:
\begin{enumerate}[label=\textup{(\roman*)}]
\item \begin{equation}
\Ker(M_I)\subset\Ker\bigl(Dq_{\cQ}^{(m)}(\theta_0)|_{T_0}\bigr),
\end{equation}
\item there exists a linear map
\begin{equation*}
B:\im(M_I)\to\im\bigl(Dq_{\cQ}^{(m)}(\theta_0)|_{T_0}\bigr)
\end{equation*}
such that
\begin{equation*}
Dq_{\cQ}^{(m)}(\theta_0)|_{T_0}=B\circ M_I.
\end{equation*}
\end{enumerate}

\end{lemma}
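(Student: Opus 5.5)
This is the same linear-algebra argument as in \cref{lem:suff-equivalent}, with $M_I$ in place of $Dq_{\cQ}^{(m)}(\theta_0)|_T$ and $Dq_{\cQ}^{(m)}(\theta_0)|_{T_0}$ in place of $Dq_{\cQ}^{(r)}(\theta_0)|_T$. Write $L:=Dq_{\cQ}^{(m)}(\theta_0)|_{T_0}$. No structural input (exact depth, stationarity, or \cref{ass:regularity}) is needed beyond the existence of the two differentials at $\theta_0$. These exist because $\Phi_I^{(m)}$ is a coordinate projection of $q_{\cQ}^{(m)}$, which is $C^1$ by \cref{prop:q-regularity}.

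For (ii)$\Rightarrow$(i), assume $L=B\circ M_I$ and take $h\in\Ker(M_I)$. Then $Lh=B(M_Ih)=B(0)=0$, so $h\in\Ker(L)$.

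For (i)$\Rightarrow$(ii), define $B$ on $\im(M_I)$ by choosing, for each $\nu\in\im(M_I)$, some $h\in T_0$ with $M_Ih=\nu$, and setting $B(\nu):=Lh$. This is well defined: if $M_Ih=M_Ih'$, then $h-h'\in\Ker(M_I)\subset\Ker(L)$ by (i), so $Lh=Lh'$.

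Linearity follows because if $\nu=M_Ih$ and $\nu'=M_Ih'$, then $c\nu+\nu'=M_I(ch+h')$. Hence $B(c\nu+\nu')=L(ch+h')=cB(\nu)+B(\nu')$. By construction $B(M_Ih)=Lh$ for every $h\in T_0$, so $L=B\circ M_I$. The image of $B$ lies in $\im(L)$, as the statement requires.

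There is no real obstacle here. The only point needing care is the well-definedness step, which is exactly where the kernel inclusion is used. Optionally, one can also note that $B$ extends to a linear map on all of $\RR^s$ by choosing a complement of $\im(M_I)$.
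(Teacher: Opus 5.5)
Your proof is correct and is the same argument as the paper's: (ii)$\Rightarrow$(i) is immediate, and for (i)$\Rightarrow$(ii) you set $B(M_Ih):=Dq_{\cQ}^{(m)}(\theta_0)h$, which is well defined precisely because of the kernel inclusion. Your explicit linearity check only spells out what the paper calls immediate. One small caveat: when you cite \cref{prop:q-regularity} for the existence of the differentials, you are tacitly assuming one of its two structural regimes, even though you say no structural input is needed; this is the lemma's ambient setting and does not affect the argument.
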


\begin{proof}
The implication \textup{(ii)}$\Rightarrow$\textup{(i)} is immediate.
Conversely, assume \textup{(i)}.
For any $\nu\in\im(M_I)$ choose $h\in T_0$ such that $\nu=M_Ih$ and define
\begin{equation*}
B(\nu):=Dq_{\cQ}^{(m)}(\theta_0)h.
\end{equation*}
If also $\nu=M_Ih'$, then $h-h'\in\Ker(M_I)\subset\Ker(Dq_{\cQ}^{(m)}(\theta_0)|_{T_0})$, so $Dq_{\cQ}^{(m)}(\theta_0)h=Dq_{\cQ}^{(m)}(\theta_0)h'$.
Hence $B$ is well defined, and linearity is immediate.

\end{proof}

\begin{theorem}[Coordinate criterion for strict coarse-depth loss]
\label{thm:selected-criterion} Fix $m<r$ and assume exact depth $r$ at $\theta_0$ together with \cref{ass:regularity}. Let $T\subset T_{\theta_0}\Theta$ be a tangent block and let $T_0\subset T$ be a linear subspace of dimension $p\ge1$. Assume
\begin{equation}
\rank\bigl(Dq_{\cQ}^{(r)}(\theta_0)|_{T_0}\bigr)=p.
\end{equation}
Let $I$ be a finite family of visible state / appended-edge pairs as above, and let $M_I:=D\Phi_I^{(m)}(\theta_0)|_{T_0}$.
If
\begin{enumerate}[label=\textup{(\alph*)}]
\item \begin{equation*}
\Ker(M_I)\subset\Ker\bigl(Dq_{\cQ}^{(m)}(\theta_0)|_{T_0}\bigr),
\end{equation*}
\item \begin{equation*}
\rank(M_I)<p,
\end{equation*}
\end{enumerate}
then the depth-$m$ window is not first-order locally sufficient relative to depth $r$ on $T_0$.

\end{theorem}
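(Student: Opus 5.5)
The plan is to reduce the statement to \cref{thm:strict-loss-characterization}. Hypothesis (a) will be used to bound the rank of the full depth-$m$ derivative on $T_0$ by $\rank(M_I)$. Hypothesis (b) then makes that rank strictly smaller than $p$, and the implication \textup{(iv)}$\Rightarrow$\textup{(i)} of \cref{thm:strict-loss-characterization} gives the conclusion. Exact depth, \cref{ass:regularity}, and the full-rank assumption $\rank(Dq_{\cQ}^{(r)}(\theta_0)|_{T_0})=p$ are exactly the standing hypotheses of that theorem, so nothing further is needed to invoke it.

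First, by (a) and \cref{lem:selected-factorization}, I obtain a linear map $B:\im(M_I)\to\im(Dq_{\cQ}^{(m)}(\theta_0)|_{T_0})$ with
\begin{equation*}
Dq_{\cQ}^{(m)}(\theta_0)|_{T_0}=B\circ M_I .
\end{equation*}
From this factorization,
\begin{equation*}
\rank\bigl(Dq_{\cQ}^{(m)}(\theta_0)|_{T_0}\bigr)\le\rank(M_I)<p .
\end{equation*}
One could instead argue directly with kernels: (a) and rank--nullity give $\dim\Ker(Dq_{\cQ}^{(m)}(\theta_0)|_{T_0})\ge\dim\Ker(M_I)=p-\rank(M_I)>0$. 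This yields item \textup{(iii)} of \cref{thm:strict-loss-characterization}, which is equally sufficient.

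Second, I apply \cref{thm:strict-loss-characterization} to $T_0$. Condition \textup{(iv)} holds, hence so does \textup{(i)}: the depth-$m$ window fails to be first-order locally sufficient relative to depth $r$ on $T_0$. If desired, the witnessing direction can be made explicit. Take any nonzero $h\in\Ker(M_I)$, which exists by (b). By (a), $Dq_{\cQ}^{(m)}(\theta_0)h=0$. By injectivity of $Dq_{\cQ}^{(r)}(\theta_0)|_{T_0}$, $Dq_{\cQ}^{(r)}(\theta_0)h\neq0$.

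There is no real obstacle. The only point needing care is that the selected coordinates $q^{(m)}_{y_j,y_{j,a}}$ are $C^1$ near $\theta_0$, so that $M_I$ is defined. This follows from \cref{prop:q-regularity}\textup{(i)}, since each is a coordinate of $q^{(m)}_{\cQ}$.
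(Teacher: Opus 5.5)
Your proposal is correct and follows the paper's proof essentially verbatim: \cref{lem:selected-factorization} turns (a) into the factorization $Dq_{\cQ}^{(m)}(\theta_0)|_{T_0}=B\circ M_I$, which gives $\rank\bigl(Dq_{\cQ}^{(m)}(\theta_0)|_{T_0}\bigr)\le\rank(M_I)<p$, and \cref{thm:strict-loss-characterization} then yields the conclusion. Your kernel-dimension variant and the explicit witness $h\in\Ker(M_I)\setminus\{0\}$ are valid shortcuts that skip the factorization lemma, but they add nothing essential.
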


\begin{proof}
By \cref{lem:selected-factorization}, assumption \textup{(a)} implies that the full coarse derivative factors linearly through $M_I$.
Hence
\begin{equation*}
\rank\bigl(Dq_{\cQ}^{(m)}(\theta_0)|_{T_0}\bigr)\le \rank(M_I)<p.
\end{equation*}
The conclusion therefore follows from \cref{thm:strict-loss-characterization}.

\end{proof}

\begin{theorem}[Global selected-coordinate criterion via quotient rank]
\label{thm:selected-criterion-global} Fix $m<r$ and assume exact depth $r$ at $\theta_0$ together with \cref{ass:regularity}. Let $T\subset T_{\theta_0}\Theta$ be a tangent block. Let $I$ be a finite family of visible state--appended-edge pairs as in \cref{def:coordinate-map}, and let
\begin{equation}
M_I:=D\Phi_I^{(m)}(\theta_0)|_T:T\to\RR^s.
\end{equation}
Assume
\begin{enumerate}[label=\textup{(\alph*)}]
\item \begin{equation*}
\Ker(M_I)\subset\Ker\bigl(Dq_{\cQ}^{(m)}(\theta_0)|_T\bigr),
\end{equation*}
\item \begin{equation*}
\rank(M_I)<\rank\bigl(Dq_{\cQ}^{(r)}(\theta_0)|_T\bigr).
\end{equation*}
\end{enumerate}
Then the depth-$m$ window is not first-order locally sufficient relative to depth $r$ on $T$.

\end{theorem}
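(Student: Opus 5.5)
The plan is to reduce the global statement to the quotient-space characterization in \cref{thm:quotient-strict-loss}\textup{(iv)}. By that theorem it suffices to show the strict rank inequality
\begin{equation*}
\rank\bigl(Dq_{\cQ}^{(m)}(\theta_0)|_T\bigr)<\rank\bigl(Dq_{\cQ}^{(r)}(\theta_0)|_T\bigr).
\end{equation*}
Its hypotheses, exact depth $r$ at $\theta_0$ and \cref{ass:regularity}, are exactly the standing hypotheses of the present statement. No injectivity of the depth-$r$ derivative on $T$ is needed, which is the point of the global version compared with \cref{thm:selected-criterion}.

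First, I apply \cref{lem:selected-factorization} with $T_0:=T$. Assumption \textup{(a)} is condition \textup{(i)} of that lemma. It therefore yields a linear map $B:\im(M_I)\to\im\bigl(Dq_{\cQ}^{(m)}(\theta_0)|_T\bigr)$ with
\begin{equation*}
Dq_{\cQ}^{(m)}(\theta_0)|_T=B\circ M_I .
\end{equation*}
Second, since the rank of a composition is bounded by the rank of the inner factor, I obtain
\begin{equation*}
\rank\bigl(Dq_{\cQ}^{(m)}(\theta_0)|_T\bigr)\le\rank(M_I).
\end{equation*}
Combined with assumption \textup{(b)}, this gives the strict inequality displayed above.

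Third, I invoke \cref{thm:quotient-strict-loss}\textup{(iv)}. It asserts that this strict rank drop is equivalent to failure of first-order local sufficiency of the depth-$m$ window relative to depth $r$ on $T$, which is the conclusion. Alternatively, one can phrase the argument through \textup{(iii)*} of that theorem. Because $\pi:T\to T/K_r$ is surjective, $\rank(\widetilde L_m)=\rank(L_m)\le\rank(M_I)$, while $\dim(T/K_r)=\rank(L_r)$. This exhibits a nontrivial kernel of $\widetilde L_m$, that is, a direction in $T$ invisible at depth $m$ but visible at depth $r$.

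There is no substantial obstacle. The only point requiring care is that \cref{lem:selected-factorization} is stated for an arbitrary tangent subspace $T_0$, so it applies verbatim with $T_0=T$. The inclusion $K_r\subset\Ker(L_m)$ needed inside \cref{thm:quotient-strict-loss} is already supplied there by \cref{cor:factorization}, so nothing further about the selected coordinates has to be checked.
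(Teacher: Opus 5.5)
Your proposal is correct and matches the paper's proof: you factor $Dq_{\cQ}^{(m)}(\theta_0)|_T=B\circ M_I$ via \cref{lem:selected-factorization} with $T_0=T$, deduce $\rank\bigl(Dq_{\cQ}^{(m)}(\theta_0)|_T\bigr)\le\rank(M_I)<\rank\bigl(Dq_{\cQ}^{(r)}(\theta_0)|_T\bigr)$, and conclude by \cref{thm:quotient-strict-loss}\textup{(iv)}. Your alternative phrasing through the quotient map is also valid; the asterisk in ``(iii)*'' is only a typo for \textup{(iii)}.
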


\begin{proof}
Set
\begin{equation*}
L_m:=Dq_{\cQ}^{(m)}(\theta_0)|_T, \qquad L_r:=Dq_{\cQ}^{(r)}(\theta_0)|_T.
\end{equation*}
By assumption \textup{(a)} and \cref{lem:selected-factorization}, there exists a linear map
\begin{equation*}
B:\im(M_I)\to\im(L_m)
\end{equation*}
such that
\begin{equation*}
L_m=B\circ M_I.
\end{equation*}
Therefore
\begin{equation*}
\im(L_m)\subset B(\im(M_I)),
\end{equation*}
which implies
\begin{equation*}
\rank(L_m)\le \rank(M_I).
\end{equation*}
Combining this with assumption \textup{(b)} yields
\begin{equation*}
\rank(L_m)<\rank(L_r).
\end{equation*}
The conclusion follows from \cref{thm:quotient-strict-loss}\textup{(iv)}.

\end{proof}

\begin{remark}
\Cref{thm:selected-criterion-global} extends \cref{thm:selected-criterion} in applicability without changing the form of the conclusion.
When the fine-depth derivative is already injective on a chosen subspace $T_0$, the global theorem restricted to $T_0$ recovers the earlier subspace-based criterion.

\end{remark}

\begin{corollary}[Global single-coordinate branching criterion]
\label{cor:single-coordinate-branching-global} Fix $m<r$ and assume exact depth $r$ at $\theta_0$ together with \cref{ass:regularity}. Let $T\subset T_{\theta_0}\Theta$ be a tangent block. Suppose there exist a visible state $y\in\cS_m$ and an admissible appended edge $a$ such that, writing $y_a:=U_m(y,a)$,
\begin{enumerate}[label=\textup{(\alph*)}]
\item \begin{equation}
\Ker\bigl(Dq_{y,y_a}^{(m)}(\theta_0)|_T\bigr) \subset \Ker\bigl(Dq_{\cQ}^{(m)}(\theta_0)|_T\bigr),
\end{equation}
\item \begin{equation}
Dq_{y,y_a}^{(m)}(\theta_0)|_T\neq0,
\end{equation}
\item \begin{equation}
\rank\bigl(Dq_{\cQ}^{(r)}(\theta_0)|_T\bigr)\ge2.
\end{equation}
\end{enumerate}
Then the depth-$m$ window is not first-order locally sufficient relative to depth $r$ on $T$.

\end{corollary}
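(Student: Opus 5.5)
The plan is to apply \cref{thm:selected-criterion-global} to the singleton family $I=\{(y,a)\}$, so that $s=1$ and $\Phi_I^{(m)}=q_{y,y_a}^{(m)}$. With this choice the selected-coordinate derivative is the linear functional
\begin{equation*}
M_I=Dq_{y,y_a}^{(m)}(\theta_0)|_T:T\to\RR .
\end{equation*}
Hypothesis (a) of the global theorem is then literally hypothesis (a) of the corollary, namely $\Ker(M_I)\subset\Ker\bigl(Dq_{\cQ}^{(m)}(\theta_0)|_T\bigr)$. Nothing needs to be checked at this step beyond matching notation, since $y_a=U_m(y,a)$ is exactly the target state used in \cref{def:coordinate-map}.

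Next I verify hypothesis (b) of \cref{thm:selected-criterion-global}, namely $\rank(M_I)<\rank\bigl(Dq_{\cQ}^{(r)}(\theta_0)|_T\bigr)$. Since $M_I$ takes values in $\RR$, its rank is at most $1$. Hypothesis (b) of the corollary, $M_I\neq0$, shows that the rank is exactly $1$. Hypothesis (c) gives $\rank\bigl(Dq_{\cQ}^{(r)}(\theta_0)|_T\bigr)\ge2$, and therefore
\begin{equation*}
\rank(M_I)=1<2\le\rank\bigl(Dq_{\cQ}^{(r)}(\theta_0)|_T\bigr).
\end{equation*}
Strictly, only $\rank(M_I)\le1$ is needed for this inequality, so the nonvanishing hypothesis (b) is not logically required. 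It serves to guarantee that the factorization in (a) is through a genuinely informative coordinate.

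With both hypotheses in place, \cref{thm:selected-criterion-global} concludes that the depth-$m$ window is not first-order locally sufficient relative to depth $r$ on $T$. That theorem itself rests on \cref{lem:selected-factorization}, which gives $\rank(L_m)\le1$, and on the quotient characterization \cref{thm:quotient-strict-loss}(iv). I expect no real obstacle here. The only points needing care are that the exact-depth and regularity assumptions are exactly those required by the global theorem, and that $q_{y,y_a}^{(m)}$ is $C^1$ near $\theta_0$, which follows from \cref{prop:q-regularity}(i).
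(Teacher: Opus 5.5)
Your proposal is correct and follows the paper's proof exactly: you apply \cref{thm:selected-criterion-global} to the singleton family $I=\{(y,a)\}$, use (a) as the factorization hypothesis, and combine $\rank(M_I)=1$ with (c). Your side remark is also right: because $\rank(M_I)\le 1$ always, hypothesis (b) is logically redundant for the conclusion, since if $M_I=0$ then (a) forces $Dq_{\cQ}^{(m)}(\theta_0)|_T=0$ and the strict rank drop is immediate.
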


\begin{proof}
Apply \cref{thm:selected-criterion-global} with $I=\{(y,a)\}$.
Then
\begin{equation*}
M_I=Dq_{y,y_a}^{(m)}(\theta_0)|_T:T\to\RR.
\end{equation*}
Assumption \textup{(a)} gives the factorization hypothesis.
Assumption \textup{(b)} implies that $M_I$ is nonzero, hence
\begin{equation*}
\rank(M_I)=1.
\end{equation*}
By assumption \textup{(c)},
\begin{equation*}
1=\rank(M_I)<\rank\bigl(Dq_{\cQ}^{(r)}(\theta_0)|_T\bigr).
\end{equation*}
All hypotheses of \cref{thm:selected-criterion-global} are therefore satisfied.

\end{proof}

\begin{lemma}[Coordinatewise stationary-fiber representation]
\label{lem:coord-fiber} Fix a pair $(y,a)$ consisting of a visible state $y\in\cS_m$ and an admissible appended edge $a$ from the terminal vertex of $y$, and write $y_a:=U_m(y,a)$. For each $z\in F_y$ define
\begin{equation}
\alpha_z(\theta):=P_{\theta,\mathrm{stat}}(Z_t^{(r)}=z\mid Z_t^{(m)}=y), \qquad \zeta_z(\theta):=q_{z,U_r(z,a)}^{(r)}(\theta).
\end{equation}
Then, for every $\theta$ near $\theta_0$,
\begin{equation}
q_{y,y_a}^{(m)}(\theta)=\sum_{z\in F_y}\alpha_z(\theta)\zeta_z(\theta).
\end{equation}
Consequently,
\begin{equation}
Dq_{y,y_a}^{(m)}(\theta_0)|_{T_0} = \sum_{z\in F_y}\alpha_z(\theta_0)D\zeta_z(\theta_0)|_{T_0} +
\sum_{z\in F_y}\zeta_z(\theta_0)D\alpha_z(\theta_0)|_{T_0}.
\end{equation}

\end{lemma}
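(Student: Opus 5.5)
The plan is to derive the identity from the pathwise truncation identity and the depth-$r$ Markov representation, using the law of total probability over the hidden fiber $F_y$. The derivative formula then follows from the product rule.

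\textbf{Step 1: conditioning on the hidden fiber.} By \cref{prop:projection-general}, $\{Z_t^{(m)}=y\}$ is the disjoint union of the events $\{Z_t^{(r)}=z\}$ over $z\in F_y$. Under stationarity this gives
\begin{equation*}
q_{y,y_a}^{(m)}(\theta)=\sum_{z\in F_y}\alpha_z(\theta)\,P_{\theta,\mathrm{stat}}\bigl(Z_{t+1}^{(m)}=y_a\mid Z_t^{(r)}=z\bigr).
\end{equation*}
The conditioning events have positive stationary mass by \cref{ass:regularity}\textup{(iv)} and \cref{lem:local-stability}.

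\textbf{Step 2: identifying the inner probability with $\zeta_z$.} This is the step needing care. Given $Z_t^{(r)}=z$, the next coarse state is $\Pi_{r,m}(U_r(z,b))$, where $b$ is the appended edge. Since $z$ and $y$ share the terminal vertex and both updates append and truncate, this equals $U_m(y,b)$. Moreover, $U_m(y,b)$ ends in $b$, so $U_m(y,b)=y_a$ holds iff $b=a$. Hence the event $\{Z_{t+1}^{(m)}=y_a\}$ coincides with $\{Z_{t+1}^{(r)}=U_r(z,a)\}$ on $\{Z_t^{(r)}=z\}$. By \cref{prop:augmentation}, its conditional probability is the depth-$r$ transition entry $q^{(r)}_{z,U_r(z,a)}(\theta)=\zeta_z(\theta)$, where $q^{(r)}_{\cQ}$ is the flattened depth-$r$ transition matrix. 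One edge case must be handled: if $a$ is not admissible from some $z\in F_y$, then $\zeta_z$ is a forced zero and the term vanishes. Admissibility, however, depends only on the terminal vertex, which is common to $y$ and $z$, so this does not occur. Substituting into Step 1 yields the displayed representation for all $\theta$ near $\theta_0$.

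\textbf{Step 3: the derivative formula.} Each $\alpha_z$ is $C^1$ by \cref{lem:fiber-weights}. Each $\zeta_z$ is a coordinate of $q_{\cQ}^{(r)}$, hence $C^1$ by \cref{ass:regularity}\textup{(ii)} and \cref{prop:q-regularity}. The sum is finite, so differentiating at $\theta_0$ with the product rule and restricting to $T_0$ gives the stated formula. The main obstacle is the update-compatibility identity $\Pi_{r,m}\circ U_r(z,\cdot)=U_m(\Pi_{r,m}z,\cdot)$ in Step 2; I expect it to follow directly from the definitions of suffix truncation.
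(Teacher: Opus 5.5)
Your proposal is correct and takes the same route as the paper: split $\{Z_t^{(m)}=y\}$ into the hidden-fiber events $\{Z_t^{(r)}=z\}$, $z\in F_y$, via the pathwise truncation identity, apply the law of total probability, and then differentiate the finite sum of products. Your Step 2 checks that $\Pi_{r,m}(U_r(z,b))=U_m(y,b)$, so the inner conditional probability is exactly $\zeta_z(\theta)$; the paper's one-line proof leaves this update-compatibility point implicit.
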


\begin{proof}
Because the model has exact depth $r$, $Z^{(m)}$ is the pathwise truncation of $Z^{(r)}$.
Conditional on $Z_t^{(m)}=y$, the hidden state $Z_t^{(r)}$ lies in $F_y$.
Conditioning on the hidden state and using the law of total probability gives the first identity, and differentiating the finite sum of products gives the second.

\end{proof}

\begin{corollary}[Single-coordinate branching special case]
\label{cor:single-coordinate-branching} Fix $m<r$ and assume exact depth $r$ at $\theta_0$ together with \cref{ass:regularity}. Let $T\subset T_{\theta_0}\Theta$ be a tangent block and let $T_0\subset T$ be two-dimensional. Suppose there exist a visible state $y\in\cS_m$ and an admissible appended edge $a$ such that, writing $y_a:=U_m(y,a)$,
\begin{enumerate}[label=\textup{(\alph*)}]
\item \begin{equation}
\Ker\bigl(Dq_{y,y_a}^{(m)}(\theta_0)|_{T_0}\bigr) \subset \Ker\bigl(Dq_{\cQ}^{(m)}(\theta_0)|_{T_0}\bigr),
\end{equation}
\item \begin{equation}
Dq_{y,y_a}^{(m)}(\theta_0)|_{T_0}\neq0,
\end{equation}
\item \begin{equation}
\rank\bigl(Dq_{\cQ}^{(r)}(\theta_0)|_{T_0}\bigr)=2.
\end{equation}
\end{enumerate}
Then the depth-$m$ window is not first-order locally sufficient relative to depth $r$ on $T_0$.

\end{corollary}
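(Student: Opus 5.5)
The plan is to obtain \cref{cor:single-coordinate-branching} as the special case $p=2$, $I=\{(y,a)\}$ of \cref{thm:selected-criterion}, with the ambient block $T$ there taken to be the present $T$ and the subspace $T_0$ taken to be the given two-dimensional $T_0$. We then check the hypotheses of that theorem one at a time.

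First, hypothesis (c) says exactly that $\rank\bigl(Dq_{\cQ}^{(r)}(\theta_0)|_{T_0}\bigr)=2=\dim T_0$. This is the full-rank assumption of \cref{thm:selected-criterion} with $p=2$.

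Second, with $I=\{(y,a)\}$ we have $\Phi_I^{(m)}=q_{y,y_a}^{(m)}$. Hence $M_I=Dq_{y,y_a}^{(m)}(\theta_0)|_{T_0}$ is a linear functional $T_0\to\RR$. Hypothesis (a) of the corollary is then verbatim hypothesis (a) of \cref{thm:selected-criterion}. For hypothesis (b) of the theorem, note that a linear map into $\RR$ has rank at most $1$. Assumption (b) of the corollary ensures the functional is nonzero, so $\rank(M_I)=1<2=p$.

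With all hypotheses in place, \cref{thm:selected-criterion} (which rests on \cref{lem:selected-factorization} and \cref{thm:strict-loss-characterization}) gives the conclusion. For transparency, one can also see it directly. By (a), $\Ker(M_I)\subset\Ker\bigl(Dq_{\cQ}^{(m)}(\theta_0)|_{T_0}\bigr)$. Since $\dim\Ker(M_I)=1$, the coarse derivative restricted to $T_0$ kills a nonzero vector $h$. By (c), the fine derivative does not kill $h$, so the kernel inclusion defining first-order local sufficiency fails.

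No step is a genuine obstacle; the argument is a direct specialization. The only point needing care is that nonvanishing in (b) is what makes $\rank(M_I)$ equal to $1$ rather than merely at most $1$. Only the bound $\rank(M_I)\le1<2$ is actually needed, so even that point is not essential.
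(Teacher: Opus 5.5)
Your proposal is correct and matches the paper's proof, which likewise obtains the corollary as the case $I=\{(y,a)\}$, $p=2$ of \cref{thm:selected-criterion}, using (b) to get $\rank(M_I)=1<2$. Your side remark is also right: a linear functional has rank at most $1<2$, so hypothesis (b) is not actually needed for the conclusion.
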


\begin{proof}
This is the special case of \cref{thm:selected-criterion} in which $I$ consists of a single coordinate.
Since $T_0$ is two-dimensional and the selected covector is nonzero, its rank is $1<2$, so assumption \textup{(b)} of \cref{thm:selected-criterion} is automatic.

\end{proof}

\begin{corollary}[Explicit product-rule matrix criterion]
\label{cor:explicit-matrix} In the setting of \cref{thm:selected-criterion}, let $L_I:T_0\to\RR^s$ be the linear map whose $j$th coordinate covector is
\begin{equation*}
\sum_{z\in F_{y_j}}\alpha_{j,z}(\theta_0)D\zeta_{j,z}(\theta_0)|_{T_0} + \sum_{z\in F_{y_j}}\zeta_{j,z}(\theta_0)D\alpha_{j,z}(\theta_0)|_{T_0},
\end{equation*}
where $\alpha_{j,z}$ and $\zeta_{j,z}$ are defined as in \cref{lem:coord-fiber} for the pair $(y_j,a_j)$.
Then
\begin{equation}
L_I=M_I.
\end{equation}
In particular, if
\begin{equation}
\Ker(L_I)\subset\Ker\bigl(Dq_{\cQ}^{(m)}(\theta_0)|_{T_0}\bigr), \qquad \rank(L_I)<p, \qquad \rank\bigl(Dq_{\cQ}^{(r)}(\theta_0)|_{T_0}\bigr)=p,
\end{equation}
then the depth-$m$ window is not first-order locally sufficient relative to depth $r$ on $T_0$.

\end{corollary}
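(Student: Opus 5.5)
The plan has two parts: first establish the identity $L_I=M_I$ row by row from \cref{lem:coord-fiber}, and then obtain the "in particular" clause by substituting $L_I$ for $M_I$ in \cref{thm:selected-criterion}.

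\textbf{The identity.} Fix $j\in\{1,\dots,s\}$ and the pair $(y_j,a_j)$. By definition, the $j$th coordinate of $\Phi_I^{(m)}$ is $q^{(m)}_{y_j,y_{j,a}}$ with $y_{j,a}=U_m(y_j,a_j)$. Therefore the $j$th row of $M_I=D\Phi_I^{(m)}(\theta_0)|_{T_0}$ is the covector $Dq^{(m)}_{y_j,y_{j,a}}(\theta_0)|_{T_0}$.

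Now apply \cref{lem:coord-fiber} to the pair $(y_j,a_j)$. It gives
\[
q^{(m)}_{y_j,y_{j,a}}(\theta)=\sum_{z\in F_{y_j}}\alpha_{j,z}(\theta)\zeta_{j,z}(\theta)
\]
for all $\theta$ near $\theta_0$. Both factors in each term are $C^1$:
\begin{itemize}
\item $\alpha_{j,z}$ is $C^1$ by \cref{lem:fiber-weights};
\item $\zeta_{j,z}$ is a coordinate of $q^{(r)}_{\cQ}$, which is $C^1$ by \cref{prop:q-regularity}(i) with $m=r$.
\end{itemize}
So the finite sum of products may be differentiated with the product rule. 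Restricting the resulting differential to $T_0$ gives exactly the $j$th row of $L_I$.

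Since the two maps agree row by row, $L_I=M_I$ as linear maps $T_0\to\RR^s$.

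\textbf{The consequence.} Replace $L_I$ by $M_I$ in the three displayed hypotheses. They become precisely:
\begin{itemize}
\item hypothesis (a) of \cref{thm:selected-criterion}, namely $\Ker(M_I)\subset\Ker(Dq^{(m)}_{\cQ}(\theta_0)|_{T_0})$;
\item hypothesis (b) of \cref{thm:selected-criterion}, namely $\rank(M_I)<p$;
\item its standing rank assumption $\rank(Dq^{(r)}_{\cQ}(\theta_0)|_{T_0})=p$.
\end{itemize}
Exact depth $r$ and \cref{ass:regularity} hold by the phrase ``in the setting of \cref{thm:selected-criterion}''. That theorem then yields the stated failure of first-order local sufficiency.

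\textbf{Expected obstacle.} The only delicate point is justifying the product rule, which requires differentiability of the fiber weights and of the fine-depth coordinates on a common neighborhood of $\theta_0$. I would verify this by noting that \cref{ass:regularity}(iv) keeps the fiber denominators bounded away from $0$ on such a neighborhood. The rest of the argument is bookkeeping.
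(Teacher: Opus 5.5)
Your proposal is correct and follows the paper's proof exactly. You obtain $L_I=M_I$ row by row from the product-rule formula of \cref{lem:coord-fiber}, then read the conclusion off \cref{thm:selected-criterion}; your check that $\alpha_{j,z}$ and $\zeta_{j,z}$ are $C^1$ near $\theta_0$, via \cref{lem:fiber-weights} and \cref{prop:q-regularity}\textup{(i)} with $m=r$, just makes explicit what the paper leaves implicit in \cref{lem:coord-fiber}.
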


\begin{proof}
The identity $L_I=M_I$ follows coordinatewise from \cref{lem:coord-fiber}.
The conclusion is then exactly \cref{thm:selected-criterion}.

\end{proof}

\begin{theorem}[Minimal informative window equals exact depth under global coordinate-rank loss]
\label{thm:minimal-global} Assume exact depth $r$ at $\theta_0$ together with \cref{ass:regularity}, and let $T\subset T_{\theta_0}\Theta$ be a tangent block. Assume:
\begin{enumerate}[label=\textup{(\alph*)}]
\item \begin{equation}
\rank\bigl(Dq_{\cQ}^{(r)}(\theta_0)|_T\bigr)=\dim T,
\end{equation}
\item for every $m<r$ there exists a finite family
\begin{equation}
I^{(m)}=\{(y_j^{(m)},a_j^{(m)}):1\le j\le s_m\}
\end{equation}
of visible state--appended-edge pairs at depth $m$ such that, with
\begin{equation}
M_{I^{(m)}}:=D\Phi_{I^{(m)}}^{(m)}(\theta_0)|_T:T\to\RR^{s_m},
\end{equation}
one has
\begin{equation*}
\Ker\bigl(M_{I^{(m)}}\bigr)\subset\Ker\bigl(Dq_{\cQ}^{(m)}(\theta_0)|_T\bigr) \qquad\text{and}\qquad \rank\bigl(M_{I^{(m)}}\bigr)<\rank\bigl(Dq_{\cQ}^{(r)}(\theta_0)|_T\bigr).
\end{equation*}
\end{enumerate}
Then
\begin{equation}
m_*(T,\theta_0)=r.
\end{equation}

\end{theorem}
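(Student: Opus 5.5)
The plan is to prove the two inequalities $m_*(T,\theta_0)\le r$ and $m_*(T,\theta_0)\ge r$ separately, directly from the definition of $m_*$ as an infimum over depths attaining full column rank $\dim T$.

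The upper bound is immediate from hypothesis (a). Since $\rank\bigl(Dq_{\cQ}^{(r)}(\theta_0)|_T\bigr)=\dim T$, the depth $r$ belongs to the set whose infimum defines $m_*(T,\theta_0)$. The map $q_{\cQ}^{(r)}$ is $C^1$ near $\theta_0$ by \cref{prop:q-regularity}\textup{(i)} with $m=r$, so this derivative makes sense. Hence $m_*(T,\theta_0)\le r$, and in particular $m_*(T,\theta_0)$ is finite.

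For the lower bound, I fix an arbitrary $m$ with $1\le m<r$ and show that depth $m$ fails to attain full column rank. Set $L_m:=Dq_{\cQ}^{(m)}(\theta_0)|_T$, which is well defined by \cref{prop:q-regularity}\textup{(i)}. Hypothesis (b) provides the family $I^{(m)}$ with $\Ker(M_{I^{(m)}})\subset\Ker(L_m)$. By \cref{lem:selected-factorization} there is then a linear map $B$ with $L_m=B\circ M_{I^{(m)}}$. This gives $\rank(L_m)\le\rank(M_{I^{(m)}})$. Combining with the strict inequality in (b) and with (a), we get
\begin{equation*}
\rank(L_m)\le\rank(M_{I^{(m)}})<\rank\bigl(Dq_{\cQ}^{(r)}(\theta_0)|_T\bigr)=\dim T.
\end{equation*}
So depth $m$ is not in the defining set. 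Equivalently, one may cite \cref{thm:selected-criterion-global} to get failure of first-order local sufficiency, and then \cref{thm:quotient-strict-loss}\textup{(iv)} for the strict rank drop. The direct chain above is shorter and I would present that.

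Since every $m<r$ is excluded and $r$ is included, the infimum equals $r$. There is no serious obstacle. The only points needing care are that the coarser-depth maps are $C^1$, so that the derivatives are defined via \cref{prop:q-regularity}, and that the rank bound through $B$ holds even though $B$ is defined only on $\im(M_{I^{(m)}})$. That bound holds because $\im(L_m)=B(\im(M_{I^{(m)}}))$.
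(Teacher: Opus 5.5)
Your proof is correct and essentially matches the paper's. The paper gets $m_*(T,\theta_0)\le r$ from (a), and for each $m<r$ it invokes \cref{thm:selected-criterion-global} and then \cref{thm:quotient-strict-loss}\textup{(iv)} to obtain $\rank(L_m)<\rank(L_r)=\dim T$; your direct chain through \cref{lem:selected-factorization} is exactly the rank inequality inside the proof of \cref{thm:selected-criterion-global}, so you simply skip the paper's round trip through first-order local sufficiency.
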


\begin{proof}
By assumption \textup{(a)}, the restricted derivative $Dq_{\cQ}^{(r)}(\theta_0)|_T$ has full column rank $\dim T$.
Therefore depth $r$ attains the defining rank threshold for $m_*(T,\theta_0)$, and so
\begin{equation*}
m_*(T,\theta_0)\le r.
\end{equation*}
Fix $m<r$.
By assumption \textup{(b)}, there exists a family $I^{(m)}$ satisfying the displayed kernel inclusion and strict rank inequality.
Applying \cref{thm:selected-criterion-global} with this family yields that the depth-$m$ window is not first-order locally sufficient relative to depth $r$ on $T$.
By \cref{thm:quotient-strict-loss}\textup{(iv)}, this is equivalent to
\begin{equation*}
\rank\bigl(Dq_{\cQ}^{(m)}(\theta_0)|_T\bigr)<\rank\bigl(Dq_{\cQ}^{(r)}(\theta_0)|_T\bigr).
\end{equation*}
Using assumption \textup{(a)} once more gives
\begin{equation*}
\rank\bigl(Dq_{\cQ}^{(m)}(\theta_0)|_T\bigr)<\dim T.
\end{equation*}
Thus no depth $m<r$ has full column rank on $T$.
Since the argument applies to every $m<r$, no smaller depth attains the rank threshold defining $m_*(T,\theta_0)$.
Combined with $m_*(T,\theta_0)\le r$, this proves
\begin{equation*}
m_*(T,\theta_0)=r.
\end{equation*}

\end{proof}

\begin{remark}
\Cref{thm:minimal-global} strengthens \cref{cor:minimal} conceptually by formulating the strict-loss tests directly on the whole tangent block $T$ rather than on auxiliary subspaces $T_0^{(m)}$.

\end{remark}

\begin{corollary}[Minimal informative window equals exact depth under blockwise strict loss]
\label{cor:minimal} Assume exact depth $r$ at $\theta_0$ together with \cref{ass:regularity}, and let $T\subset T_{\theta_0}\Theta$ be a tangent block. Assume:
\begin{enumerate}[label=\textup{(\alph*)}]
\item \begin{equation}
\rank\bigl(Dq_{\cQ}^{(r)}(\theta_0)|_T\bigr)=\dim T,
\end{equation}
\item for every $m<r$ there exists a subspace $T_0^{(m)}\subset T$ such that the hypotheses of \cref{thm:strict-loss-characterization} hold on $T_0^{(m)}$ and
\begin{equation}
\rank\bigl(Dq_{\cQ}^{(m)}(\theta_0)|_{T_0^{(m)}}\bigr)<\dim T_0^{(m)}.
\end{equation}
\end{enumerate}
Then
\begin{equation}
m_*(T,\theta_0)=r.
\end{equation}

\end{corollary}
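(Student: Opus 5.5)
The plan mirrors the proof of \cref{thm:minimal-global}, with \cref{thm:strict-loss-characterization} applied on each auxiliary subspace $T_0^{(m)}$ in place of \cref{thm:selected-criterion-global}.

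First I establish the upper bound. By assumption \textup{(a)}, $Dq_{\cQ}^{(r)}(\theta_0)|_T$ has full column rank $\dim T$, so depth $r$ belongs to the set in the definition of $m_*(T,\theta_0)$. Hence $m_*(T,\theta_0)\le r$.

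Next I rule out every $m<r$. Fix such an $m$ and the subspace $T_0^{(m)}\subset T$ from assumption \textup{(b)}, with $p:=\dim T_0^{(m)}\ge1$. The hypotheses of \cref{thm:strict-loss-characterization} hold there; in particular $\rank(Dq_{\cQ}^{(r)}(\theta_0)|_{T_0^{(m)}})=p$. The strict inequality $\rank(Dq_{\cQ}^{(m)}(\theta_0)|_{T_0^{(m)}})<p$ is condition \textup{(iv)}. By the equivalence \textup{(iv)}$\Leftrightarrow$\textup{(iii)}, there is a nonzero $h\in T_0^{(m)}\subset T$ with $Dq_{\cQ}^{(m)}(\theta_0)h=0$. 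Thus $\Ker(Dq_{\cQ}^{(m)}(\theta_0)|_T)\neq\{0\}$, and rank--nullity on $T$ gives
\[
\rank\bigl(Dq_{\cQ}^{(m)}(\theta_0)|_T\bigr)<\dim T .
\]
So depth $m$ fails the full-column-rank threshold. Since $m<r$ was arbitrary, no smaller depth attains it, and combining with the upper bound gives $m_*(T,\theta_0)=r$.

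I do not expect a real obstacle. The only point needing care is the passage from the subspace $T_0^{(m)}$ to the whole block $T$: a rank deficit on a subspace does not directly bound the rank on $T$. I handle this by working with kernels, since a nonzero kernel vector in $T_0^{(m)}$ is also a nonzero kernel vector of the restriction to $T$, rather than by comparing ranks directly. This step uses only the inclusion $T_0^{(m)}\subset T$, not assumption \textup{(a)}; assumption \textup{(a)} is needed only for the upper bound.
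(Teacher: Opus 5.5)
Your proposal is correct and follows the paper's proof essentially step for step. You take the upper bound $m_*(T,\theta_0)\le r$ from (a), and then, for each $m<r$, apply \cref{thm:strict-loss-characterization} on $T_0^{(m)}$ to produce a nonzero $h\in T_0^{(m)}\subset T$ in the kernel of $Dq_{\cQ}^{(m)}(\theta_0)$, which rules out full column rank on $T$; passing from $T_0^{(m)}$ to $T$ through kernels rather than ranks is exactly how the paper handles that step.
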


\begin{proof}
By assumption \textup{(a)}, the restricted derivative $Dq_{\cQ}^{(r)}(\theta_0)|_T$ has full column rank $\dim T$.
Therefore depth $r$ attains the defining rank threshold for $m_*(T,\theta_0)$, and hence
\begin{equation*}
m_*(T,\theta_0)\le r.
\end{equation*}
Fix $m<r$.
By assumption \textup{(b)}, there exists a subspace $T_0^{(m)}\subset T$ such that the hypotheses of \cref{thm:strict-loss-characterization} hold on $T_0^{(m)}$ and
\begin{equation*}
\rank\bigl(Dq_{\cQ}^{(m)}(\theta_0)|_{T_0^{(m)}}\bigr)<\dim T_0^{(m)}.
\end{equation*}
Applying \cref{thm:strict-loss-characterization} on $T_0^{(m)}$ shows that $Dq_{\cQ}^{(m)}(\theta_0)|_{T_0^{(m)}}$ has nontrivial kernel.
Hence there exists $0\neq h\in T_0^{(m)}\subset T$ such that
\begin{equation*}
Dq_{\cQ}^{(m)}(\theta_0)h=0.
\end{equation*}
Therefore $Dq_{\cQ}^{(m)}(\theta_0)|_T$ cannot have full column rank $\dim T$.
Since this holds for every $m<r$, no smaller depth attains full column rank on $T$.
Combined with $m_*(T,\theta_0)\le r$, this proves
\begin{equation*}
m_*(T,\theta_0)=r.
\end{equation*}

\end{proof}

\section{Categorical reformulation}
This optional section records a compact reformulation of the deterministic branching mechanism.
No later proof depends on this section.

\begin{definition}[Visible projection functor, local form]
Fix $m<r$.
The truncation map $\Pi_{r,m}:\cS_r\to\cS_m$ induces a projection from depth-$r$ visible transitions to depth-$m$ visible transitions by summing over hidden fibers with stationary weights.

\end{definition}

\begin{theorem}[Categorical form of the branching criterion]
\label{thm:cat-branching} In the exact-depth regime, the depth-$m$ informative map is obtained from the depth-$r$ informative map by composition with the deterministic truncation of states together with stationary fiber averaging. If a tangent block satisfies the hypotheses of either \cref{thm:selected-criterion} or \cref{thm:selected-criterion-global}, then the induced first-order morphism exhibits strict kernel enlargement at depth $m$ relative to depth $r$ after quotienting by the directions already invisible at depth $r$.

\end{theorem}

\begin{proof}
The state-level part is exactly \cref{prop:projection-general}.
The averaging part is the explicit formula from \cref{cor:factorization}.
The strict kernel enlargement statement follows from \cref{thm:selected-criterion} in the injective-block setting and from \cref{thm:selected-criterion-global} together with \cref{thm:quotient-strict-loss} on an arbitrary tangent block.

\end{proof}

\section{Conditional statistical recovery of the minimal informative window}
\label{sec:estimation}
This section is conditional and included only for completeness.
It records a simple plug-in consistency principle once the relevant derivative estimators and a uniform singular-value gap are already available, but it does not construct such estimators for a concrete class of quiver-valued variable-length Markov chains.
In particular, it is not a statistical treatment of model selection for quiver-valued variable-length Markov chains in full generality.

\begin{assumption}[Plug-in rank recovery setup]
\label{ass:estimation} Fix a tangent block $T\subset T_{\theta_0}\Theta$ of dimension $p$, and fix an integer $M\ge1$ such that $m_*(T,\theta_0)\le M$. For each $1\le m\le M$, assume there is a random matrix estimator
\begin{equation}
\widehat J_{n,m}\to Dq_{\cQ}^{(m)}(\theta_0)|_T
\end{equation}
in probability, entrywise and hence in operator norm, after bases on $T$ and the target coordinate spaces are fixed.
Assume moreover that there exists a constant $\gamma>0$ such that
\begin{equation*}
\sigma_{\min}\bigl(Dq_{\cQ}^{(m_*)}(\theta_0)|_T\bigr)\ge 2\gamma,
\end{equation*}
where $m_*:=m_*(T,\theta_0)$, $p:=\dim T$, and $\sigma_{\min}$ denotes the $p$-th singular value of the restricted derivative, with the convention that this value is $0$ whenever the target dimension is smaller than $p$.
For $m<m_*(T,\theta_0)$ the restricted derivative is rank-deficient.

\end{assumption}

\begin{theorem}[Conditional consistency of the minimal-window estimator]
\label{thm:estimation} Under \cref{ass:estimation}, define
\begin{equation}
\widehat m_n:=\min\Bigl\{1\le m\le M:\sigma_{\min}(\widehat J_{n,m})>\gamma\Bigr\},
\end{equation}
with the convention $\widehat m_n=M+1$ if the set is empty.
Then
\begin{equation}
\widehat m_n\to m_*(T,\theta_0) \qquad\text{in probability.}
\end{equation}

\end{theorem}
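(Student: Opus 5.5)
The argument reduces consistency to a deterministic statement about singular values, using Weyl's perturbation inequality. Fix bases as in \cref{ass:estimation} and write $J_m:=Dq_{\cQ}^{(m)}(\theta_0)|_T$ and $m_*:=m_*(T,\theta_0)\le M$. Singular values are $1$-Lipschitz in operator norm:
\begin{equation*}
\bigl|\sigma_{\min}(\widehat J_{n,m})-\sigma_{\min}(J_m)\bigr|\le\|\widehat J_{n,m}-J_m\|_{\mathrm{op}}.
\end{equation*}
When the target dimension is smaller than $p$, both sides of this comparison are $0$ by the stated convention, so the inequality still holds. Let $\delta_m:=\sigma_{\min}(J_m)$.

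\textbf{Separation.} For $m<m_*$, $J_m$ is rank-deficient, so $\delta_m=0$. For $m=m_*$, we have $\delta_{m_*}\ge2\gamma$.

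\textbf{Good event.} Define
\begin{equation*}
E_n:=\bigcap_{m\le m_*}\bigl\{\|\widehat J_{n,m}-J_m\|_{\mathrm{op}}<\gamma\bigr\}.
\end{equation*}
This is a finite intersection, and each event has probability tending to $1$ by the assumed convergence in probability. Hence $\PP(E_n)\to1$ by a union bound on the complements.

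\textbf{Deterministic conclusion on $E_n$.}
\begin{enumerate}[label=\textup{(\roman*)}]
\item For every $m<m_*$, Weyl's inequality gives $\sigma_{\min}(\widehat J_{n,m})<0+\gamma=\gamma$, so no such $m$ enters the selection set.
\item For $m=m_*$, Weyl's inequality gives $\sigma_{\min}(\widehat J_{n,m_*})>2\gamma-\gamma=\gamma$, so $m_*$ does enter the set.
\end{enumerate}
Since $\widehat m_n$ is the minimum of the set, $\widehat m_n=m_*$ on $E_n$. Indices $m>m_*$ are irrelevant because the minimum is already attained at $m_*$, so no control of the estimators at those depths is needed. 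It follows that $\PP(\widehat m_n=m_*)\ge\PP(E_n)\to1$. Because $\widehat m_n$ takes integer values, this is exactly convergence in probability.

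\textbf{Where care is needed.} The main point is the bookkeeping of the definition of $\sigma_{\min}$ as the $p$-th singular value, including the zero convention when the target dimension is less than $p$. This is what ensures rank deficiency for $m<m_*$ translates into $\delta_m=0$. One should also check that Weyl's inequality applies to the $p$-th singular value of matrices of the same fixed shape, which holds because the bases and target coordinate spaces are fixed. Everything else is routine.
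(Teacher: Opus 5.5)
Your proof is correct and follows the paper's route. Rank deficiency forces the $p$-th singular value to vanish for $m<m_*$, the $2\gamma$ gap at $m_*$ survives an operator-norm perturbation smaller than $\gamma$, and the finitely many depths $m\le m_*$ combine to give $\PP(\widehat m_n=m_*)\to1$; your Weyl inequality on the single good event $E_n$ is simply a quantitative version of the paper's appeal to continuity of singular values.
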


\begin{proof}
For $m<m_*$, the matrix $Dq_{\cQ}^{(m)}(\theta_0)|_T$ is rank-deficient, so its smallest singular value is $0$.
By continuity of singular values under operator-norm perturbations and the assumed consistency of $\widehat J_{n,m}$,
\begin{equation*}
\sigma_{\min}(\widehat J_{n,m})\to 0 \qquad\text{in probability.}
\end{equation*}
Hence
\begin{equation*}
\PP\bigl(\sigma_{\min}(\widehat J_{n,m})>\gamma\bigr)\to0 \qquad (m<m_*).
\end{equation*}
At $m=m_*$, the singular-value gap assumption gives
\begin{equation*}
\sigma_{\min}\bigl(Dq_{\cQ}^{(m_*)}(\theta_0)|_T\bigr)\ge2\gamma.
\end{equation*}
Therefore
\begin{equation*}
\PP\bigl(\sigma_{\min}(\widehat J_{n,m_*})>\gamma\bigr)\to1.
\end{equation*}
Combining the finitely many subcritical depths with the critical one shows that, with probability tending to $1$, no depth smaller than $m_*$ crosses the threshold $\gamma$ while depth $m_*$ does.
Hence $\widehat m_n=m_*$ with probability tending to $1$.

\end{proof}

\section{Conditional LAN kernel transfer}
\label{sec:lan}
This section is conditional.
It records how the deterministic kernels identified earlier propagate to Gaussian LAN limits once an additional likelihood-level factorization hypothesis is imposed.
Besides bare LAN for the chosen experiment, one needs a likelihood factorization through the visible informative map together with a nondegeneracy condition on the induced quadratic form along the image of the derivative.
The conclusions below should therefore be read only as transfer statements from the deterministic kernel criteria to the Gaussian shift limit.
Bare LAN alone does not identify the deterministic kernels appearing in the earlier sections.

\begin{remark}
The LAN material below is deliberately separated from the deterministic rank theory.
The deterministic sections prove inclusions and rank comparisons for derivatives of informative maps, whereas the Gaussian statements additionally require an external LAN input and the factorized hypothesis \cref{ass:lan}.

\end{remark}

\begin{assumption}[LAN factorization at depth $\ell$]
\label{ass:lan} Fix a visible depth $\ell\ge1$ and a tangent block $T\subset T_{\theta_0}\Theta$. Assume the experiment generated by $(Z_0^{(\ell)},\dots,Z_n^{(\ell)})$ is LAN at $\theta_0$ on $T$, and that for local perturbations $h_n=h/\sqrt n$ with $h\in T$ the log-likelihood ratio admits the expansion
\begin{equation*}
\log\frac{dP_{\theta_0+h_n}^{(\ell,n)}}{dP_{\theta_0}^{(\ell,n)}} = \langle\Lambda_\ell h,\Delta_{n,\ell}\rangle-\frac12\|\Lambda_\ell h\|^2+o_{P_{\theta_0}}(1),
\end{equation*}
where $\Delta_{n,\ell}\Rightarrow N(0,I_{N_\ell})$, $N_\ell$ is the ambient coordinate dimension of $q_{\cQ}^{(\ell)}$, and
\begin{equation*}
\Lambda_\ell=J_\ell^{1/2}Dq_{\cQ}^{(\ell)}(\theta_0)|_T
\end{equation*}
for some symmetric positive semidefinite matrix $J_\ell$ on the ambient coordinate space of $q_{\cQ}^{(\ell)}$ whose quadratic form is positive definite on $\im(Dq_{\cQ}^{(\ell)}(\theta_0)|_T)$.

\end{assumption}

\begin{proposition}[Conditional LAN at the true depth]
\label{prop:lan-true} Assume exact depth $r$ at $\theta_0$. Suppose, after shrinking to a neighborhood of $\theta_0$ if necessary, that the depth-$r$ chain has fixed finite support, is irreducible, the positive transition coordinates depend smoothly on $\theta$, and the initial law is either stationary or contributes only an $o_{P_{\theta_0}}(1)$ term to the local log-likelihood ratio. If a standard finite-state Markov-chain LAN theorem is invoked under these hypotheses for the chosen parameterization, then the experiment generated by $Z^{(r)}$ is LAN at $\theta_0$ on every fixed tangent block $T\subset T_{\theta_0}\Theta$. This proposition records only bare LAN, not the factorized form required by \cref{ass:lan}.

\end{proposition}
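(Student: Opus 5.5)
The plan is to reduce the statement to a standard finite-state Markov-chain LAN theorem. The work is in checking that theorem's hypotheses for the depth-$r$ boundary chain in the chosen chart, and then restricting to $T$. First, by \cref{prop:augmentation}, $Z^{(r)}$ is a time-homogeneous Markov chain on the fixed finite support $\cS_r$ for all $\theta$ near $\theta_0$. Its transition matrix is $p_\theta=q_{\cQ}^{(r)}(\theta)\in\mathcal A_r$. The log-likelihood of $(Z_0^{(r)},\dots,Z_n^{(r)})$ then splits into an initial term and the sum
\begin{equation*}
\sum_{t=1}^n\log p_\theta\bigl(Z_{t-1}^{(r)},Z_t^{(r)}\bigr).
\end{equation*}
By hypothesis, the initial term contributes only $o_{P_{\theta_0}}(1)$ to the local log-likelihood ratio.

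Second, I will verify the regularity the LAN theorem needs. The support is fixed, so only the nonzero coordinates of $p_\theta$ enter the likelihood. These are smooth and bounded away from $0$ near $\theta_0$, which makes $\theta\mapsto\log p_\theta(z,z')$ $C^1$ on the support. Irreducibility on the fixed support holds near $\theta_0$ by \cref{lem:local-stability}. Together with \cref{lem:stationary}, it gives a unique stationary law $\pi_\theta$ depending $C^1$ on $\theta$. It also gives the ergodic theorem and the martingale central limit theorem for the score increments
\begin{equation*}
\partial_h\log p_{\theta_0}\bigl(Z_{t-1}^{(r)},Z_t^{(r)}\bigr).
\end{equation*}
These increments are martingale differences because every row of $p_\theta$ sums to $1$. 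I then invoke the standard theorem, for example via differentiability in quadratic mean of the transition kernels. This yields LAN at $\theta_0$ with central sequence $n^{-1/2}\sum_t\dot\ell_{\theta_0}(Z_{t-1},Z_t)$ and Fisher information
\begin{equation*}
I(\theta_0)=\sum_{z,z'}\pi_{\theta_0}(z)\,p_{\theta_0}(z,z')\,\dot\ell\,\dot\ell^\top,
\end{equation*}
where the sum runs over pairs in the support.

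Third, I restrict to a tangent block $T$. Perturbations $h_n=h/\sqrt n$ with $h\in T$ are a sub-family of the full local perturbations. LAN restricts directly: the restricted central sequence is the projection onto $T^*$, and the restricted information is $I(\theta_0)|_T$. The statement is chart-independent by the chain-rule argument of \cref{prop:chart-invariance}. I will say explicitly that nothing here identifies $\Lambda_r$ with $J_r^{1/2}Dq_{\cQ}^{(r)}(\theta_0)|_T$ in the sense of \cref{ass:lan}, since the proposition claims only bare LAN.

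The main obstacle is the first-order hypothesis of the invoked LAN theorem. Smoothness is stated only for the positive transition coordinates. I need to rule out coordinates that vanish at $\theta_0$ but not nearby, since these would break differentiability in quadratic mean. The forced-zero pattern is locally constant by \cref{ass:regularity}, so such coordinates cannot occur, and I will lean on that assumption for this step.
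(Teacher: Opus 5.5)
Your proposal is correct and takes the same route as the paper. You use \cref{prop:augmentation} to identify $Z^{(r)}$ as a time-homogeneous Markov chain on the finite set $\cS_r$, and then import a standard LAN theorem for smooth irreducible finite-state Markov chains; your martingale-score, Fisher-information and restriction-to-$T$ details just spell out what the paper delegates to the cited external result. The proposition itself does not assume \cref{ass:regularity}, but your appeal to it and to \cref{lem:local-stability} is harmless, because the statement's own hypotheses (fixed finite support and irreducibility after shrinking the neighborhood) already supply the constant zero pattern and irreducibility you need there.
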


\begin{proof}
By \cref{prop:augmentation}, the observed depth-$r$ process is a finite-state time-homogeneous Markov chain.
The additional hypotheses in the statement are precisely those needed to import a standard LAN theorem for smooth irreducible finite-state Markov chains in the present parameterization, under that external theorem, the claim follows.
See, for example, \cite{billingsley,bickel-ritov} for background on finite-state Markov chains and asymptotic statistical arguments of this type.
No likelihood-factorization statement is claimed here.

\end{proof}

\begin{proposition}[Conditional LAN for coarser windows]
\label{prop:lan-coarse} Fix $m<r$ and assume exact depth $r$ at $\theta_0$. Then $Z^{(m)}$ is a deterministic function of the hidden finite-state Markov chain $Z^{(r)}$. Suppose, in addition, that the projected family satisfies the hypotheses of a standard finite hidden-Markov-model LAN theorem appropriate to this deterministic-emission setting, including the required fixed hidden support, irreducibility/mixing, smooth dependence of the positive hidden transitions on $\theta$, and any domination or identifiability conditions used by the invoked theorem. Then the experiment generated by $Z^{(m)}$ is LAN at $\theta_0$ on every fixed tangent block $T\subset T_{\theta_0}\Theta$. This statement provides only bare LAN for the coarse observation scheme and does not by itself yield the factorized shift representation required in \cref{ass:lan}.

\end{proposition}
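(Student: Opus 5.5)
The plan is to proceed as in the proof of \cref{prop:lan-true}: first record a purely structural fact from the exact-depth theory, then import an external LAN theorem whose hypotheses are exactly the ones assumed in the statement.

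The first step is the deterministic-function claim. By \cref{prop:augmentation}, for every $\theta$ near $\theta_0$ the process $Z^{(r)}$ is a time-homogeneous finite-state Markov chain on the fixed support $\cS_r$. By \cref{prop:projection-general}, $Z_t^{(m)}=\Pi_{r,m}(Z_t^{(r)})$ pathwise for all $t$. Hence $Z^{(m)}$ is a deterministic-emission hidden Markov model: the hidden chain is $Z^{(r)}$ and the emission kernel is the point mass at $\Pi_{r,m}(z)$. This kernel does not depend on $\theta$, so it is trivially smooth and trivially dominated, namely by counting measure on the finite set $\cS_m$.

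The second step is to check the hypotheses of the invoked HMM LAN theorem.
\begin{enumerate}[label=\textup{(\arabic*)}]
\item Fixed hidden support and locally constant forced-zero pattern come from \cref{ass:regularity}\textup{(i),(v)}.
\item Irreducibility on a neighborhood follows from \cref{ass:regularity}\textup{(iii)} and \cref{lem:local-stability}.
\item $C^1$ dependence of the positive hidden transitions is \cref{ass:regularity}\textup{(ii)}.
\item Smoothness of the stationary initial law follows from \cref{lem:stationary}.
\end{enumerate}
Any remaining conditions of the external theorem, such as aperiodicity or mixing, or a specific identifiability or nondegeneracy requirement, cannot be derived from the paper's standing hypotheses. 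They are exactly what the statement assumes ``in addition''. The theorem then yields LAN for the experiment generated by $(Z_0^{(m)},\dots,Z_n^{(m)})$ on each fixed tangent block $T$, by restricting the parametric LAN to directions in $T$. Note that bare LAN permits a degenerate Fisher information, for example along $\Ker(Dq_{\cQ}^{(m)}(\theta_0)|_T)$.

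The main obstacle is honesty about scope rather than computation. The coarse process is not Markov in general, as \cref{ex:branching} shows, so LAN cannot be obtained from the Markov-chain theorem used in \cref{prop:lan-true}. The conclusion therefore rests entirely on the external HMM theorem. I would end the argument by stressing that nothing here produces the factorized form $\Lambda_m=J_m^{1/2}Dq_{\cQ}^{(m)}(\theta_0)|_T$ of \cref{ass:lan}. The HMM score depends on the whole predictive filter, not only on the one-step visible law $q_{\cQ}^{(m)}$, so the statement is only the bare LAN assertion.
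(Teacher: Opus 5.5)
Your argument is essentially the paper's: \cref{prop:projection-general} exhibits $Z^{(m)}=\Pi_{r,m}(Z^{(r)})$ as a deterministic-emission hidden Markov model, and LAN is then imported from an external finite-HMM theorem whose hypotheses the statement assumes outright. Deriving some of those hypotheses from \cref{ass:regularity}, which this proposition does not assume, is harmless but unnecessary.

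One closing remark is wrong, though. $Z^{(m)}$ is indeed not Markov, but the last edge of $Z_{t+1}^{(m)}$ is the newly appended edge, so $(Z_0^{(m)},\dots,Z_n^{(m)})$ generates the same $\sigma$-field as $(Z_{r-m}^{(r)},\dots,Z_n^{(r)})$. The coarse experiment is therefore the depth-$r$ Markov experiment shifted by $r-m$ steps, and LAN does follow from the Markov-chain theorem behind \cref{prop:lan-true}. Its information coincides with the depth-$r$ information, whose kernel is $\Ker(Dq_{\cQ}^{(r)}(\theta_0)|_T)$, so it does not degenerate along $\Ker(Dq_{\cQ}^{(m)}(\theta_0)|_T)$.
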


\begin{proof}
By \cref{prop:projection-general}, the visible process $Z^{(m)}$ is obtained by applying the deterministic map $\Pi_{r,m}$ coordinatewise to the hidden Markov chain $Z^{(r)}$.
Under the extra hypotheses stated above, one may invoke a finite hidden-Markov-model LAN theorem that covers this deterministic observation mechanism, and the claim then follows.
See, for example, \cite{baum-petrie,bickel-ritov,cappe-moulines-ryden} for hidden-Markov background and asymptotic methodology.
As in \cref{prop:lan-true}, only bare LAN is asserted here.

\end{proof}

\begin{remark}
\Cref{prop:lan-true,prop:lan-coarse} are intentionally phrased as import statements rather than self-contained LAN proofs.
Their role is only to isolate when bare LAN is available, every later Gaussian comparison still relies on the stronger factorized hypothesis \cref{ass:lan}.

\end{remark}

\begin{theorem}[Kernel alignment under the factorized LAN hypothesis]
\label{thm:kernel-align} Assume \cref{ass:lan}. Then
\begin{equation}
\Ker\Lambda_\ell= \Ker\bigl(Dq_{\cQ}^{(\ell)}(\theta_0)|_T\bigr).
\end{equation}
Consequently, for $h,h'\in T$,
\begin{equation}
\Lambda_\ell h=\Lambda_\ell h' \quad\Longleftrightarrow\quad h-h'\in\Ker\bigl(Dq_{\cQ}^{(\ell)}(\theta_0)|_T\bigr).
\end{equation}

\end{theorem}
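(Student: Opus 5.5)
The plan is to prove the kernel identity by factoring $\Lambda_\ell$ as $J_\ell^{1/2}\circ D$, where $D:=Dq_{\cQ}^{(\ell)}(\theta_0)|_T$, and then to read off the second display as a restatement of equality of kernels.

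The inclusion $\Ker D\subset\Ker\Lambda_\ell$ is immediate: if $Dh=0$ then $\Lambda_\ell h=J_\ell^{1/2}Dh=0$.

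For the reverse inclusion, take $h\in T$ with $\Lambda_\ell h=0$ and set $v:=Dh\in\im D$. Then
\begin{equation*}
\|\Lambda_\ell h\|^2=\langle J_\ell^{1/2}v,J_\ell^{1/2}v\rangle=\langle v,J_\ell v\rangle,
\end{equation*}
using that $J_\ell^{1/2}$ is the symmetric positive semidefinite square root of $J_\ell$. Hence $\langle v,J_\ell v\rangle=0$. \Cref{ass:lan} says the quadratic form of $J_\ell$ is positive definite on $\im D$, so $v=0$, that is, $h\in\Ker D$. This is the only substantive step. What must be checked is that $J_\ell^{1/2}$ denotes the symmetric square root, so that $(J_\ell^{1/2})^\top J_\ell^{1/2}=J_\ell$. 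Positive definiteness is needed only on $\im D$, not on the whole ambient space, and that is exactly where $v$ lies.

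For the consequence, linearity gives $\Lambda_\ell h=\Lambda_\ell h'$ if and only if $\Lambda_\ell(h-h')=0$. By the identity just proved, this holds if and only if $h-h'\in\Ker D$.
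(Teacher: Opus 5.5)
Your proposal is correct and follows essentially the same route as the paper: the easy inclusion comes directly from the factorization, and the reverse inclusion uses $\|\Lambda_\ell h\|^2=v^\top J_\ell v$ with positive definiteness of $J_\ell$ on $\im D$; the pairwise equivalence then follows by linearity. Your explicit check that $J_\ell^{1/2}$ is the symmetric square root is a point the paper's proof uses implicitly.
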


\begin{proof}
By \cref{ass:lan}, the LAN shift map factors as
\begin{equation*}
\Lambda_\ell=J_\ell^{1/2}Dq_{\cQ}^{(\ell)}(\theta_0)|_T.
\end{equation*}
If $h\in\Ker(Dq_{\cQ}^{(\ell)}(\theta_0)|_T)$, then the right-hand side vanishes, so $h\in\Ker\Lambda_\ell$.
This proves
\begin{equation*}
\Ker\bigl(Dq_{\cQ}^{(\ell)}(\theta_0)|_T\bigr) \subset \Ker\Lambda_\ell.
\end{equation*}
For the reverse inclusion, let $h\in\Ker\Lambda_\ell$ and set
\begin{equation*}
v:=Dq_{\cQ}^{(\ell)}(\theta_0)|_T h\in\im\bigl(Dq_{\cQ}^{(\ell)}(\theta_0)|_T\bigr).
\end{equation*}
Then
\begin{equation*}
0=\|\Lambda_\ell h\|^2 =\|J_\ell^{1/2}v\|^2 =v^\top J_\ell v.
\end{equation*}
By assumption, the quadratic form induced by $J_\ell$ is positive definite on
\begin{equation*}
\im\bigl(Dq_{\cQ}^{(\ell)}(\theta_0)|_T\bigr).
\end{equation*}
Since $v$ lies in that image and $v^\top J_\ell v=0$, it follows that $v=0$.
Hence
\begin{equation*}
Dq_{\cQ}^{(\ell)}(\theta_0)|_T h=0,
\end{equation*}
so $h\in\Ker(Dq_{\cQ}^{(\ell)}(\theta_0)|_T)$.
Therefore
\begin{equation*}
\Ker\Lambda_\ell= \Ker\bigl(Dq_{\cQ}^{(\ell)}(\theta_0)|_T\bigr).
\end{equation*}
The equivalence for pairs $h,h'$ follows by applying this identity to $h-h'$.

\end{proof}

\begin{corollary}[Gaussian loss from deterministic loss]
\label{cor:gaussian-loss} Fix $m<r$ and a tangent block $T\subset T_{\theta_0}\Theta$. Assume \cref{ass:lan} holds at depths $m$ and $r$. If the depth-$m$ window is not first-order locally sufficient relative to depth $r$ on $T$, then
\begin{equation}
\Ker\Lambda_m\not\subset\Ker\Lambda_r.
\end{equation}
In particular, there exists a local direction that is asymptotically invisible in the coarse Gaussian shift but visible in the fine one.

\end{corollary}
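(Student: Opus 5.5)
The plan is to combine the deterministic failure of sufficiency with \cref{thm:kernel-align} applied at both depths. Applying that theorem with $\ell=m$ and $\ell=r$, which is allowed because \cref{ass:lan} holds at both depths, gives
\begin{equation*}
\Ker\Lambda_m=\Ker\bigl(Dq_{\cQ}^{(m)}(\theta_0)|_T\bigr), \qquad \Ker\Lambda_r=\Ker\bigl(Dq_{\cQ}^{(r)}(\theta_0)|_T\bigr).
\end{equation*}
The key point is that the Gaussian kernels coincide exactly with the deterministic kernels on $T$. This needs the positive-definiteness of $J_\ell$ on the image of the derivative, which is built into \cref{ass:lan}.

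Next, by the definition of first-order local sufficiency, failure of sufficiency means
\begin{equation*}
\Ker\bigl(Dq_{\cQ}^{(m)}(\theta_0)|_T\bigr)\not\subset\Ker\bigl(Dq_{\cQ}^{(r)}(\theta_0)|_T\bigr).
\end{equation*}
Substituting the two kernel identities gives $\Ker\Lambda_m\not\subset\Ker\Lambda_r$ immediately.

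For the ``in particular'' clause, I would choose $h\in\Ker\Lambda_m\setminus\Ker\Lambda_r$. Then $\Lambda_mh=0$, so in the coarse experiment the local log-likelihood ratio along $h/\sqrt n$ is $o_{P_{\theta_0}}(1)$. That direction is therefore asymptotically invisible: the perturbed and unperturbed limits coincide, so the sequences are mutually contiguous with trivial limiting shift. By contrast $\Lambda_rh\neq0$, so the fine experiment has a nondegenerate Gaussian shift $N(\Lambda_r h,I)$ versus $N(0,I)$, and the direction is visible there.

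There is no real obstacle here; the only care needed is that both kernel identities are used in full, with both inclusions. This matters because the non-inclusion must transfer in both slots. The inclusion $\Ker\bigl(Dq^{(m)}\bigr)\subset\Ker\Lambda_m$ is needed for $h$ to lie in $\Ker\Lambda_m$. The inclusion $\Ker\Lambda_r\subset\Ker\bigl(Dq^{(r)}\bigr)$ is needed for $h\notin\Ker\Lambda_r$, and it is exactly the nondegeneracy part of \cref{ass:lan}.
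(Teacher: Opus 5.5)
Your proposal is correct and follows essentially the paper's proof. The paper likewise takes $h\in T$ with $Dq_{\cQ}^{(m)}(\theta_0)h=0$ and $Dq_{\cQ}^{(r)}(\theta_0)h\neq0$, and transfers both conditions through \cref{thm:kernel-align} at depths $m$ and $r$ to obtain $h\in\Ker\Lambda_m\setminus\Ker\Lambda_r$. Your bookkeeping of which inclusion is needed in each slot (the nondegeneracy of $J_r$ is what gives $h\notin\Ker\Lambda_r$) and your reading of the ``in particular'' clause via the LAN expansion are accurate.
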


\begin{proof}
Failure of first-order local sufficiency means that there exists $h\in T$ such that
\begin{equation*}
Dq_{\cQ}^{(m)}(\theta_0)h=0, \qquad Dq_{\cQ}^{(r)}(\theta_0)h\neq0.
\end{equation*}
By \cref{thm:kernel-align}, this is equivalent to
\begin{equation*}
\Lambda_m h=0, \qquad \Lambda_r h\neq0.
\end{equation*}
Hence $h\in\Ker\Lambda_m\setminus\Ker\Lambda_r$.

\end{proof}

\section{Deterministic synthesis and scope}
\label{sec:synthesis}

\begin{theorem}[Deterministic rank comparison in the two tractable regimes]
\label{thm:main-dichotomy} Let $\theta_0\in\Theta$ and let $T\subset T_{\theta_0}\Theta$ be a tangent block.
\begin{enumerate}[label=\textup{(\roman*)}]
\item Suppose the model is edge-homogeneous near $\theta_0$, satisfies \cref{ass:regularity}, and the representation hypothesis of \cref{thm:homogeneous} holds for the visible depths under consideration. Then all such visible depths have the same first-order rank on $T$.
\item Suppose the model has exact depth $r$ at $\theta_0$ and satisfies \cref{ass:regularity}. Then for every $m<r$,
\begin{equation}
\rank\bigl(Dq_{\cQ}^{(m)}(\theta_0)|_T\bigr) \le \rank\bigl(Dq_{\cQ}^{(r)}(\theta_0)|_T\bigr).
\end{equation}
Moreover, strict coarse-depth loss on $T$ is equivalent to strict rank drop from depth $r$ to depth $m$ on $T$ itself.
If, in addition, the hypotheses of \cref{thm:selected-criterion-global} hold on $T$, or the hypotheses of \cref{thm:selected-criterion} hold on some subspace $T_0\subset T$, then depth $m$ loses a nonzero first-order direction relative to depth $r$ on the corresponding block.
\end{enumerate}

\end{theorem}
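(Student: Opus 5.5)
The theorem is a synthesis statement, so the plan is to assemble it from results already proved rather than to introduce a new argument. I would treat the two regimes separately. Throughout, I work on a neighborhood of $\theta_0$ on which \cref{ass:regularity} holds for the finitely many depths being compared. This ensures that every informative map involved is well defined and $C^1$ by \cref{prop:q-regularity}, so the restricted differentials make sense.

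For part (i), I fix any two visible depths $m,n$ under consideration and apply \cref{thm:homogeneous}. The representation hypothesis is assumed for exactly these depths. The theorem supplies fixed linear maps $G_{m\to n}$ and $G_{n\to m}$ intertwining the informative maps near $\theta_0$. Its conclusion already gives
\[
\rank\bigl(Dq_{\cQ}^{(m)}(\theta_0)|_T\bigr)=\rank\bigl(Dq_{\cQ}^{(n)}(\theta_0)|_T\bigr).
\]
Since equality of ranks is transitive over the finite family of depths, all of them share a common rank on $T$. The only point to check is that the regularity neighborhood is chosen uniformly for the finite family of depths, which is built into \cref{ass:regularity}.

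For part (ii), the rank inequality is the last display of \cref{cor:factorization}. It comes from differentiating $q_{\cQ}^{(m)}=G_{r,m}\circ q_{\cQ}^{(r)}$ and using the chain rule on the relative-affine neighborhood $\mathcal U_r$. The equivalence of strict coarse-depth loss with strict rank drop on $T$ is \cref{thm:quotient-strict-loss}\textup{(iv)}. That result uses the kernel inclusion $\Ker(L_r)\subset\Ker(L_m)$, which is again supplied by the factorization, so no injectivity hypothesis on $T$ is needed.

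For the final sentence of part (ii), there are two cases.
\begin{enumerate}[label=\textup{(\alph*)}]
\item If the hypotheses of \cref{thm:selected-criterion-global} hold on $T$, that theorem gives failure of first-order local sufficiency on $T$. By definition this yields $h\in T$ with $Dq_{\cQ}^{(m)}(\theta_0)h=0$ but $Dq_{\cQ}^{(r)}(\theta_0)h\ne0$, so $h\neq0$, which is a lost nonzero direction.
\item If the hypotheses of \cref{thm:selected-criterion} hold on some $T_0\subset T$, that theorem gives non-sufficiency on $T_0$. \Cref{thm:strict-loss-characterization}\textup{(ii)} then produces the nonzero direction $h\in T_0$ explicitly.
\end{enumerate}

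I do not expect a genuine obstacle, since each step is a citation. The only care needed is bookkeeping. The phrase ``on the corresponding block'' must be read as $T$ in the global case and $T_0$ in the subspace case. I also have to confirm that the neighborhoods used for the factorization and for the coordinate maps $\Phi_I^{(m)}$ can be shrunk to a common one, which holds because only finitely many depths and coordinates are involved.
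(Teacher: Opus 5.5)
Your proposal is correct and matches the paper's proof, which is likewise a pure citation assembly: part (i) from \cref{thm:homogeneous}, the inequality from \cref{cor:factorization}, the equivalence from \cref{thm:quotient-strict-loss}\textup{(iv)}, and the final sentence from \cref{thm:selected-criterion-global,thm:selected-criterion}, with \cref{thm:strict-loss-characterization} supplying the explicit nonzero direction on $T_0$. The paper's proof also cites \cref{thm:minimal-global} and \cref{cor:example-fits}, but the statement as written does not need them, so your leaner version loses nothing.
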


\begin{proof}
Part \textup{(i)} is exactly \cref{thm:homogeneous}.
The monotonicity statement in part \textup{(ii)} is \cref{cor:factorization}, the intrinsic quotient-space form of strict loss is \cref{thm:quotient-strict-loss}, the certification mechanisms are provided by \cref{thm:strict-loss-characterization,thm:selected-criterion,thm:selected-criterion-global}, and the strengthened exact-depth recovery statement is \cref{thm:minimal-global}.
The concrete depth-two realization is given by \cref{cor:example-fits}.

\end{proof}

\begin{remark}
The theorem above is a comparison theorem, not an exhaustive classification of all quiver-valued variable-length Markov chains.
The edge-homogeneous and exact-depth regimes isolate two structurally tractable settings in which precise first-order rank statements can be proved.
Models outside these regimes may require different methods.

\end{remark}

\begin{remark}
The theorem above is the deterministic core of the manuscript.
It gives a local dichotomy between exact equality of ranks across visible depths in the edge-homogeneous regime and monotone loss of information under deterministic truncation in the exact-depth regime.

\end{remark}

\begin{remark}
The strongest depth-recovery statement in this manuscript is \emph{not} that exact depth automatically implies $m_*(T,\theta_0)=r$.
Exact depth yields rank monotonicity, while the identities \cref{cor:minimal,thm:minimal-global} require additional strict-loss input, formulated either on auxiliary subspaces or directly on the whole tangent block through selected coordinates.

\end{remark}

\begin{remark}
The statistical and LAN sections are conditional transfer principles.
The plug-in theorem requires derivative estimators and a singular-value gap, while the Gaussian comparison statements require the factorized LAN hypothesis in \cref{ass:lan}.
Bare LAN by itself does not identify the deterministic kernels studied earlier.

\end{remark}


\begin{thebibliography}
{99} \bibitem{baum-petrie} L.
E.
Baum and T.
Petrie, \newblock Statistical inference for probabilistic functions of finite state Markov chains, \newblock \emph{Ann.
Math.
Statist.} \textbf{37} (1966), 1554--1563.
\bibitem{bickel-ritov} P.
J.
Bickel and Y.
Ritov, \newblock Inference in hidden Markov models I: Local asymptotic normality in the stationary case, \newblock in \emph{Theory of Statistics}, de Gruyter, Berlin, 1986.
\bibitem{billingsley} P.
Billingsley, \newblock \emph{Statistical Inference for Markov Processes}, \newblock University of Chicago Press, Chicago, 1961.
\bibitem{buhlmann-modelsel} P.
B\"uhlmann, \newblock Model selection for variable length Markov chains and tuning the context algorithm, \newblock \emph{Ann.
Inst.
Statist.
Math.} \textbf{52} (2000), 287--315.
\bibitem{buhlmann-wyner} P.
B\"uhlmann and A.
J.
Wyner, \newblock Variable length Markov chains, \newblock \emph{Ann.
Statist.} \textbf{27} (1999), 480--513.
\bibitem{cappe-moulines-ryden} O.
Capp\'e, E.
Moulines, and T.
Ryd\'en, \newblock \emph{Inference in Hidden Markov Models}, \newblock Springer, New York, 2005.
\bibitem{machler-vlmc} M.
M\"achler and P.
B\"uhlmann, \newblock Variable length Markov chains: Methodology, computing, and software, \newblock \emph{J.
Comput.
Graph.
Statist.} \textbf{13} (2004), 435--455.
\bibitem{rissanen} J.
Rissanen, \newblock A universal data compression system, \newblock \emph{IEEE Trans.
Inform.
Theory} \textbf{29} (1983), 656--664.
\end{thebibliography}
\end{document}